\theoremstyle{plain}
\newtheorem{theorem}{Theorem}[section]
\newtheorem{lemma}[theorem]{Lemma}
\newtheorem{proposition}[theorem]{Proposition}
\newtheorem{corollary}[theorem]{Corollary}
\newtheorem*{rep@theorem}{\rep@title}
\newcommand{\newreptheorem}[2]{%
\newenvironment{rep#1}[1]{%
 \def\rep@title{#2 \ref{##1}}%
 \begin{rep@theorem}}%
 {\end{rep@theorem}}}
\theoremstyle{definition}
\newtheorem{example}{Example}%
\newtheorem{definition}{Definition}%
\theoremstyle{remark}
\newtheorem{remark}{Remark}%
\newcommand{\R}{\mathbb{R}} 
\def \Rn{{\R^n}}
\newcommand{\hid}{m}
\def \Rnhi{{\R^{n\hid}}}
\def \B{B_2^n}
\newcommand{\N}{\mathbb{N}}
\def \E{\mathbb E}
\def \s{\mathbb{S}^{n-1}}
\def \S{\mathbb{S}^{n\hid-1}}
\def \u{\underline}
\def \o{\overline}
\def \conbod {\mathcal{K}^n}
\def \Conbod {\mathcal{K}^{n \hid}}
\newcommand{\vol}{\text{\rm Vol}}
\def \p{\Pi}
\def \pp{\Pi^{\circ}}
\def \g{\Gamma}
\def \cov{g_{K}}
\newcommand {\LYZ}[1]{\P{\langle#1\rangle}}
\newcommand {\LYZP}[1]{\PP \langle#1\rangle}
\def \conv{\operatorname{conv}}
\def \P{\Pi^{\hid}}
\def \PP{\Pi^{\circ,\hid}}
\def \G{\Gamma^{\hid}}
\def \sta{\mathcal{S}^{n\hid}}
\def \Cov{g_{K,\hid}}
\begin{document}

\title[Higher Order Projection and Centroid Bodies]{Affine Isoperimetric Inequalities for Higher-Order Projection and Centroid Bodies}


\author[1]{\fnm{Juli\'an} \sur{Haddad}}\email{jhaddad@us.es}

\author*[2]{\fnm{Dylan} \sur{Langharst}}\email{dylan.langharst@imj-prg.fr}

\author[3]{\fnm{Eli} \sur{Putterman}}\email{putterman@mail.tau.ac.il}

\author[4]{\fnm{Michael} \sur{Roysdon}}\email{mar327@case.edu}

\author[5]{\fnm{Deping} \sur{Ye}}\email{deping.ye@mun.ca}

\affil[1]{\orgdiv{Departamento de An\'alisis Matem\'atico}, \orgname{Universidad de Sevilla}, \orgaddress{\street{C. Tarfia}, \city{Sevilla}, \postcode{41012}, \country{Spain}}}

\affil*[2]{\orgdiv{Institut de Math\'ematiques de Jussieu}, \orgname{Sorbonne Universit\'e}, \orgaddress{\street{4 Place Jussieu}, \city{Paris}, \postcode{75252}, \country{France}}}

\affil[3]{\orgdiv{School of Mathematical Sciences}, \orgname{Tel Aviv University}, \orgaddress{\city{Tel Aviv}, \postcode{66978}, \state{Tel Aviv-Yafo}, \country{Israel}}}

\affil[4]{\orgdiv{Department of Mathematics, Applied Mathematics, and Statistics}, \orgname{Case Western Reserve}, \orgaddress{ \city{Cleveland}, \postcode{44106}, \state{Ohio}, \country{USA}}}

\affil[5]{\orgdiv{Department of Mathematics and Statistics}, \orgname{Memorial University of Newfoundland}, \orgaddress{\city{St. John's}, \state{Newfoundland}, \postcode{A1C 5S7}, \country{Canada}}}

\abstract{In 1970, Schneider introduced the $\hid$th order difference body of a convex body, and also established the $\hid$th-order Rogers-Shephard inequality. In this paper, we extend this idea to the projection body, centroid body, and radial mean bodies, as well as prove the associated inequalities (analogues of Zhang's projection inequality, Petty's projection inequality, the Busemann-Petty centroid inequality and Busemann's random simplex inequality). We also establish a new proof of Schneider's $\hid$th-order Rogers-Shephard inequality. As an application, a $\hid$th-order affine Sobolev inequality for functions of bounded variation is provided.}

\keywords{ Projection bodies, Centroid bodies, Radial mean bodies, Busemann-Petty centroid inequality, Steiner symmetrization, Petty projection inequality, affine Sobolev inequality}


\pacs[MSC Classification]{52A39, 52A40;  Secondary: 28A75, 46E35}

\maketitle

\section{Introduction}
\label{sec:intro}
In this work, we will be dealing with convex bodies. A set $K$ in $\R^n$ (the standard $n$-dimensional Euclidean space) is said to be a \textit{convex body} if it is a compact set with non-empty interior such that, for every $x,y\in K$ and $\lambda\in[0,1],$ $(1-\lambda)y+\lambda x\in K.$ The set of convex bodies in $\R^n$ will be denoted by $\conbod$. A convex body $K$ is origin-symmetric if $K=-K$, and is said to be symmetric if a translate of $K$ is origin-symmetric.
\subsection{The Difference Body and the Rogers-Shephard inequality}
The \textit{difference body} of $K\in\conbod$ is given by
	\begin{equation}\label{e:differnce}
	    DK:=\{x:K\cap(K+x)\neq \emptyset\}=K+(-K),
	\end{equation}
 where $K+L=\{ a + b: a\in K, b\in L\}$ is the Minkowski sum of $K,L\in\conbod$. Throughout, $\vol_n(K)$ denotes the volume (Lebesgue measure) of $K.$ The functional $K \mapsto \vol_n(DK)/\vol_n(K)$ is continuous and affine invariant on $\conbod$, and can be taken as a measure of asymmetry of $K\in\conbod.$ Indeed, one has
	\begin{equation}\label{e:roger_shep}
	2^n\leq\frac{\vol_n(DK)}{\vol_n(K)}\leq\binom{2n}{n},
	\end{equation}
	where the left-hand side follows from the Brunn-Minkowski inequality (see Section~\ref{sec:properties} below), with equality if and only if $K$ is symmetric, and the right-hand side is the \textbf{Rogers-Shephard inequality}, with equality if and only if $K$ is a $n$-dimensional simplex (closed convex hull of $n+1$ affinely independent points) \cite{RS57}. Here, $\Gamma(z)$ is the standard gamma function and $\binom{m}{n}=\frac{\Gamma(m+1)}{\Gamma(n+1)\Gamma(m-n+1)}$ are the binomial coefficients.

 The starting point of this work is the following. For $\hid\in\mathbb{N}$, consider the higher-dimensional difference body, given by
\[
D^\hid(K) := \left\{\overline{x}=(x_1,\dots,x_{\hid}) \in (\R^n)^{\hid} \colon K \cap \bigcap_{i=1}^\hid (x_i+K) \neq \emptyset \right\}.
\]

We will henceforth refer to $D^\hid(K)$ as the $\hid$\textit{th-order difference body of K}. We emphasize that this is a convex body in $(\R^{n})^\hid=\R^{n\hid}$.
Throughout this paper we will simply write $\R^{n\hid},$ but the product structure $\R^{n\hid} = \R^n \times \cdots \times \R^n$ will be of central importance. We will often use $\bar{x}=(x_1,\dots,x_\hid)$, with each $x_i\in\Rn$, to denote a point in $\Rnhi$.
In \cite{Sch70}, Schneider established the following $\hid$th-order Rogers-Shephard inequality for $K\in\conbod$:
\begin{equation}
    \frac{\vol_{n\hid}\left(D^\hid (K)\right)}{\vol_n(K)^{\hid}}\leq \binom{n\hid +n}{n}
    \label{eq:RSell},
\end{equation}
with equality if and only if $K$ is a $n$-dimensional simplex. From the affine invariance and continuity of the functional in the left-hand side of \eqref{eq:RSell}, a minimum for this quantity also exists. When $n=2$ and $\hid\in\mathbb{N}$ is arbitrary, Schneider was able to show that the minimum is attained for every symmetric $K\in\mathcal{K}^2,$ but, for $n\geq 3,$ $\hid\geq 2,$ he constructed an example of a symmetric body where the value of the functional in \eqref{eq:RSell} is strictly larger than its value when $K$ is an ellipsoid. Thus, it is conjectured that \eqref{eq:RSell} is minimized for ellipsoids for such $n$ and $\hid$. 
The objective of this paper is to extend the theory of mth-order convex bodies to other operators defined on $\conbod$ and $\Conbod$, namely the projection body, the centroid body and the radial mean bodies.

\subsection{The projection body and related inequalities}
Let $B_2^n$ denote the unit Euclidean ball in $\R^n$ and denote its boundary as $\mathbb{S}^{n-1},$ for $n\in\N.$ Using the notation $\langle x,y \rangle$ for the Euclidean inner-product of $x,y \in \Rn,$ we have that for $K \in \conbod$, the orthogonal projection of $K$ onto $\theta^{\perp}=\{x\in\R^n:\langle \theta,x\rangle =o\},$ the subspace through the origin orthogonal to $\theta\in\s,$ is denoted by $P_{\theta^\perp} K$. The support function of $L\in\conbod,$ defined as 
\[h_L(x)=\sup \{\langle x,y\rangle \colon y \in L \}, \quad x\in \Rn,\]
uniquely determines $L.$  Recall also the following notation: given a function $f$, we can write $f=f_+-f_{-}$, where $f_+=\max\{f,0\}$ and $f_-=-\min\{f,0\}$. The \textit{projection body} of $K\in\conbod$, denoted $\Pi K$, is the origin-symmetric convex body whose support function is given by any of the following equivalent definitions:
\begin{equation}
 \label{eq:cauchy}
	   h_{\Pi K}(\theta)=\vol_{n-1}\left(P_{\theta^{\perp}}K\right) =\int_{\partial K} \langle \theta, n_K(y) \rangle_{-}dy = nV_n(K[n-1], [o,-\theta]),
	\end{equation}
 for $\theta\in\s$.
Here, $\partial K$ denotes the boundary of $K,$ $dy$ represents integration with respect to the $(n-1)$-dimensional Hausdorff measure on $\partial K$, $n_K(y)$ represents the outward-pointing unit normal vector, which is defined a.e. on $\partial K$, and $V_n(K[n-1],[o,-\theta])$ denotes the mixed volume of $(n-1)$ copies of $K$ and the line segment connecting the origin $o$ and $-\theta,$ denoted $[o,-\theta]$ (see Section~\ref{sec:properties} for the definition of mixed volumes).

An important function associated with a convex body $K$ is its \textit{covariogram function}, which appears often in the literature. We recall its definition: for $K\in\conbod$ the \textit{covariogram} of $K$ is given by
	\begin{equation}\label{e:covario}
	    \cov(x)=\vol_n\left(K\cap(K+x)\right)=(\chi_K\star\chi_{-K})(x)
	\end{equation}
 (recall that for a set $K\subset \Rn,$ $\chi_K(y)=1$ if $y\in K$ and $0$ otherwise and
 $(f\star g)(x)= \int_{\R^n} f(y)g(x-y)dy$ is the convolution of functions $f,g:\R^n \to \R$.)
 The support of $\cov(x)$ is the difference body of $K$, $DK$. In fact, Chakerian \cite{Cha67} was the first to show the connection between the covariogram and reverse-isoperimetric-type inequalities in his celebrated, concise proof of the Rogers-Shephard inequality. The covariogram function has the following radial derivatives:
 $$\frac{d\cov(r\theta)}{dr}\bigg|_{r=0^+}=-h_{\Pi K}(\theta), \quad \theta\in\s.$$
This formula was first shown by Matheron \cite{MA}.

For a convex body $L$ having the origin as an interior point, its polar body is $L^\circ=\left\{x\in \R^n: h_L(x) \leq 1\right\}$.  Write $\kappa_n=\vol_n(B_2^n)$ and $(\Pi K)^\circ=\Pi^\circ K$. One has that $\Pi^\circ B_2^n = \kappa_{n-1}^{-1}B_2^n.$ The Petty product $K\mapsto \vol_n(K)^{n-1}\vol_n(\Pi^\circ K)$ is a continuous, affine invariant functional on $\conbod.$ For $K\in\conbod$, one has
\begin{equation}\label{e:Zhang_petty_ineq}
\frac{1}{n^n} \binom{2n}{n}\leq\vol_n(K)^{n-1}\vol_n(\Pi^\circ K)\leq \left(\frac{\kappa_n}{\kappa_{n-1}}\right)^n.
\end{equation}
The right-hand side of \eqref{e:Zhang_petty_ineq} is \textbf{Petty's projection inequality} which was proven by Petty in 1971 \cite{CMP71}; equality occurs if and only if $K$ is an ellipsoid. The left-hand side of \eqref{e:Zhang_petty_ineq} is known as \textbf{Zhang's projection inequality}, proven by Zhang in 1991 \cite{Zhang91}; equality holds if and only if $K$ is a $n$-dimensional simplex.

An easy consequence of the definition of the projection body operator is \textbf{Petty's isoperimetric inequality} \cite{petty61_1,CMP71}, which asserts that, for every $K\in\conbod,$
\begin{equation}\vol_n(\pp K)\vol_{n-1}(\partial K)^n \geq \kappa_n \left(\frac{\kappa_n}{\kappa_{n-1}}\right)^n,
\label{eq:petty_theorem}
\end{equation}
with equality if and only if $\Pi K$ is a dilate of the Euclidean ball. In fact, Petty's isoperimetric inequality \eqref{eq:petty_theorem}, in conjunction with Petty's projection inequality, the second inequality in \eqref{e:Zhang_petty_ineq}, implies the classical isoperimetric inequality.

Prominent extensions of the projection body operator include the $L^p$ projection bodies, where the Petty projection inequality for these bodies was established by Lutwak, Yang and Zhang \cite{LYZ00}, and asymmetric $L^p$ projection bodies, introduced by Lutwak \cite{LE96} and Ludwig (from a valuation-theory perspective) \cite{ML05}; the Petty projection inequality for these latter bodies was established by Haberl and Schuster \cite{HS09}.

\subsection{Higher-order projection body}
We now introduce the extension of the covariogram, first defined implicitly by Schneider \cite{Sch70}. One has, for $K\in\conbod$ and $\hid\in\mathbb{N},$ that the $\hid$-covariogram of $K$ is given by, using the notation $\bar x=(x_1,\dots,x_\hid)\in\Rnhi$:
\begin{equation}
\Cov(\overline{x})=\vol_n\left(K\cap\bigcap_{i=1}^\hid (x_i+K)\right).
\label{eq_vol_ell_co}
\end{equation}
Our first main result gives a formula for the radial derivative of the $\hid$-covariogram.  For $\bar\theta=(\theta_1,\dots,\theta_\hid),$ we introduce the notation $$C_{\bar\theta}=\conv_{1\leq i \leq \hid}[o,\theta_i],$$ where $\conv_{1\leq i \leq \hid}(A_i)$ denotes the closed convex hull of the sets $A_1,\dots,A_\hid.$ Notice that $\max_{1 \leq i \leq \hid} \langle \theta_i, v \rangle_{-} = h_{C_{-\bar\theta}}(v).$

\begin{theorem} \label{t:variationalformula}
Let $K \in \conbod$ and $\hid \in \N$. Then, for every direction $\bar{\theta} = (\theta_1,\dots,\theta_{\hid})$ $\in \S$:
\[
\frac{d}{dr} \Cov (r\bar{\theta}) \bigg|_{r=0^+}
= -\int_{\partial K} \max_{1 \leq i \leq \hid} \langle \theta_i, n_K(y) \rangle_{-}dy.\]
\end{theorem}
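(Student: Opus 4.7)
The plan is to adapt Matheron's original derivation of the $\hid=1$ formula stated just before the theorem, by replacing the single inward slab near $\partial K$ with a union of $\hid$ slabs, one per direction $\theta_i$, whose pointwise thickness is the maximum of the individual thicknesses. First, from the definition \eqref{eq_vol_ell_co} together with De Morgan's law,
\[
\vol_n(K)-\Cov(\bar x)=\vol_n\!\left(\bigcup_{i=1}^{\hid}K\setminus(x_i+K)\right).
\]
So it suffices to expand this volume to first order in $\bar x$.

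Next, I would fix $y\in\partial K$ at which the outward unit normal $n_K(y)$ exists (an $\mathcal H^{n-1}$-a.e.\ condition on $\partial K$) and use a one-sided tubular parametrization $(y',t)\mapsto y'-t\,n_K(y')$ identifying a neighborhood of $y$ inside $K$ with a product, whose Jacobian tends to $1$ as $t\to 0^+$. Using the local supporting hyperplane at $y'$, a point $z=y'-t\,n_K(y')$ with $t>0$ small lies outside $x_i+K$ precisely when $-t-\langle x_i,n_K(y')\rangle>0$, that is, when $0\le t<\langle x_i,n_K(y')\rangle_{-}$. Taking the union over $i$, the set $\bigcup_i K\setminus(x_i+K)$ corresponds locally to the slab
\[
\bigl\{(y',t):\,0\le t<\max_{1\le i\le \hid}\langle x_i,n_K(y')\rangle_{-}\bigr\}.
\]

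Applying Fubini in $(y',t)$, summing over a finite cover of $\partial K$ by such neighborhoods, and absorbing the Jacobian and overlap errors into an $o(|\bar x|)$ term (possible because the integrand is pointwise bounded by $|\bar x|$, so one can invoke dominated convergence after dividing by $|\bar x|$), I obtain, with $\bar x=r\bar\theta$,
\[
\vol_n(K)-\Cov(r\bar\theta)=r\!\int_{\partial K}\max_{1\le i\le \hid}\langle \theta_i,n_K(y)\rangle_{-}\,dy+o(r).
\]
Dividing by $r$ and letting $r\to 0^+$ yields the claimed formula, which for $\hid=1$ recovers Matheron's identity.

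The main obstacle is the rigorous control of the tubular parametrization when $\partial K$ is merely convex rather than $C^2$, so that the Jacobian estimate and the uniform $o(r)$ bound are valid across the whole boundary. A clean way to bypass this is to rephrase the calculation directly on the sphere via the surface area measure $S_{n-1}(K,\cdot)$: since $\max_{i}\langle \theta_i,v\rangle_{-}=h_{C_{-\bar\theta}}(v)$, the target formula is the mixed-volume identity
\[
\frac{d}{dr}\Cov(r\bar\theta)\bigg|_{r=0^+}=-\!\int_{\s}h_{C_{-\bar\theta}}(u)\,dS_{n-1}(K,u)=-nV\bigl(K[n-1],C_{-\bar\theta}\bigr),
\]
which handles the singular set of $\partial K$ automatically and dovetails with the projection-body viewpoint developed in the subsequent sections.
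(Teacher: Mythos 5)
Your geometric picture is right: near a boundary point the set $\bigcup_i K\setminus(x_i+K)$ is locally a slab of thickness $\max_i\langle x_i,n_K(y')\rangle_-$, and the identity $\max_i\langle\theta_i,v\rangle_-=h_{C_{-\bar\theta}}(v)$ correctly converts the target into the mixed-volume statement $\frac{d}{dr}\Cov(r\bar\theta)\big|_{r=0^+}=-nV(K[n-1],C_{-\bar\theta})$. But neither of your two routes closes the argument. The tubular-slab computation, as you yourself note, needs $\partial K$ smooth enough to support the parametrization $(y',t)\mapsto y'-t\,n_K(y')$ with controlled Jacobian; for a general convex body the supporting-hyperplane comparison gives only a one-sided inclusion (it shows $z-x_i\notin K$ when $t<\langle x_i,n_K(y')\rangle_-$, but the reverse containment for $t$ past that threshold requires first-order flatness of $\partial K$ at $y'$), and making the $o(r)$ error uniform, including over the singular set, is exactly the hard part. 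Your proposed ``bypass'' does not repair this: rewriting the right-hand side as $-nV(K[n-1],C_{-\bar\theta})$ is a reformulation of the conclusion, not a proof that the one-sided derivative of the covariogram equals that mixed volume.

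The missing step is supplied in the paper by two observations. First, $K\cap\bigcap_{i}(K+r\theta_i)$ is exactly the Wulff shape (Alexandrov body) of the function $u\mapsto h_K(u)-r\max_i\langle u,\theta_i\rangle_-$, since intersecting the half-space representations of $K$ and of its translates replaces $h_K(u)$ by $\min_{0\le i\le\hid}\left(h_K(u)+r\langle u,\theta_i\rangle\right)$. Second, Aleksandrov's variational formula gives $\frac{d}{dt}\vol_n([h_K+tf])\big|_{t=0}=\int_{\s}f(u)\,d\sigma_K(u)$ for every continuous $f$ and \emph{arbitrary} $K\in\conbod$; this is precisely the tool that justifies the first-order expansion without any boundary regularity and handles the singular set for free. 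If you prefer to keep the slab argument, you would have to prove the theorem first for smooth strictly convex bodies and then pass to the limit, justifying the interchange of $r\to0^+$ with the approximation (the $1/n$-concavity of $r\mapsto\Cov(r\bar\theta)$ on its support is the natural handle for this); as written, the proof is incomplete.
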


We next introduce a new generalization of the projection body.
\begin{definition}
\label{def:higher_proj}
    For $K\in\conbod$ and $\hid\in\N$, the \textit{$\hid$th-order projection body} $\P K$ is an $n\hid$-dimensional convex body whose support function is defined for $\bar{\theta}=(\theta_1,\dots,\theta_{\hid})\in\S$ as
\begin{align*}h_{\P K}(\bar{\theta})= nV_n(K[n-1],C_{-\bar\theta})
=\int_{\partial K} \max_{1 \leq i \leq \hid} \langle \theta_i, n_K(y) \rangle_{-}dy.
\end{align*}
\end{definition}
From the second equality, it is easy to see that $\bar\theta\mapsto h_{\P K}(\bar\theta)$ is sublinear, which guarantees the existence of $\P K$ and that it is a compact, convex set. Next, note that $\Pi^{1} K=\Pi K$. From the fact that $$h_{\P K}(\bar{\theta}) \geq \max_{1\leq i \leq \hid} h_{\p K}(\theta_i)>0,$$ we see that $\P K$ contains the origin as an interior point. In particular, $\P K\in \Conbod$.
The translation invariance of mixed volumes shows that $\P (K+x) = \P K$ for every $x\in\Rn.$ For $u\in\s,$ let $u_j=(o,\dots,o,u,o,\dots,o)\in\S$, where $u$ is in the $j$th coordinate. As an easy observation, we see that
$$h_{\P K}(u_j)=nV_n(K[n-1],[o,-u])=h_{\Pi K}(u).$$
This shows in that the projection of $\P K$ onto any of the $\hid$ copies of $\R^n$ is $\Pi K.$ Additionally, taking $u^m=\frac{1}{\sqrt{m}}(u,\dots,u)$ yields $h_{\P K}(u^m)=nV_n(K[n-1],[o,-\frac{u}{\sqrt{m}}])=\frac{1}{\sqrt{m}}h_{\Pi K}(u)$, and thus $\P K$ is not a product set. 

One can see that the dimension of $C_{-\bar\theta}$ exhausts all dimensions from $1$ to $m$ as $\bar\theta$ varies over $\S$. This shows that we cannot consider generalizations of $\P K$ by changing the number of copies of $K$ and $C_{-\bar\theta}$ in the mixed volumes (say, for example, $K$ appearing $(n-2)$ times and $C_{-\bar\theta}$ appearing twice). Compare this to \cite[Equation 5.68]{Sh1} which says, for a $(n-k)$-dimensional subspace $E$ of $\Rn$ $(k=1,\dots,n-1)$ and any convex body $U$ in $E^\perp$ of $k$-dimensional volume $1$, that $\vol_{n-k}(P_E K)=\binom{n}{k}V_n(K[n-k],U[k])$. In particular, this shows we are not in general studying projections of $K$ onto subspaces of co-dimension larger than $1$.

We will use the natural notation $\PP K = (\P K)^\circ$; in Appendix~\ref{secA1}, we compare this body to $(\pp K)^m$. Then, we verify in Proposition~\ref{p:affine_invar_Zhang} that
$$K\in\conbod\mapsto\vol_n(K)^{n\hid-\hid}\vol_{n\hid}(\PP K)$$
is affine invariant on $\conbod.$ A major focus is to then find extremizers of this affine invariant, which in the case of $\hid=1$ return the classical affine isoperimetric inequalities \eqref{e:Zhang_petty_ineq} of Petty and Zhang.

Gardner and Zhang defined in \cite{GZ98} the {\it radial mean bodies} $R_p K$ for $p \in (-1, \infty)$ as a way to interpolate between the bodies $\pp K$ and $D K$ (see Section~\ref{sec:radial}). We shall define the $\hid$th-order radial mean bodies $R_p^\hid K$ (see Section~\ref{sec:new_rad}) and prove the following chain of set inclusions, generalizing \cite[Theorem 5.5]{GZ98}:
\begin{theorem}
\label{t:set_con}
Let $K\in\conbod$ and $\hid\in\mathbb{N}.$ Then, for $-1< p< q < \infty$, one has
$$D^\hid (K) \subseteq {\binom{q+n}{n}}^{\frac{1}{q}} R^\hid_{q}K \subseteq {\binom{p+n}{n}}^{\frac{1}{p}} R^\hid_{p} (K)\subseteq n\vol_n(K)\PP K.$$
Equality occurs in any set inclusion if and only if $K$ is an $n$-dimensional simplex.
\end{theorem}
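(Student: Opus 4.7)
My strategy is to verify each of the three set inclusions by proving the corresponding pointwise inequality between radial functions at an arbitrary direction $\bar\theta\in\S$, extracting the equality case uniformly by tracing it back to a single rigidity statement: the $1/n$-concavity of $\Cov$ along rays. The Section~\ref{sec:new_rad} definition of $\Rp K$ should (by analogy with $R_p K$) yield
\[
\rho_{\Rp K}(\bar\theta)^p \;=\; \frac{1}{\vol_n(K)}\int_0^{R(\bar\theta)} p\,r^{p-1}\Cov(r\bar\theta)\,dr,\qquad R(\bar\theta):=\rho_{\D K}(\bar\theta),
\]
and the sole analytic input doing the work is that $\psi(r):=\Cov(r\bar\theta)^{1/n}$ is concave on $[0,R(\bar\theta)]$, with $\psi(0)=\vol_n(K)^{1/n}$ and $\psi(R(\bar\theta))=0$. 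This in turn follows from Brunn--Minkowski applied to the convex set $\{(y,\bar x)\in\Rn\times\Rnhi : y\in K,\ y-x_i\in K\text{ for each }i\}$, whose section at $\bar x$ has $n$-volume $\Cov(\bar x)$.

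\textbf{Outer inclusions.} Concavity of $\psi$ sandwiches it between its chord $\vol_n(K)^{1/n}(1-r/R(\bar\theta))$ from below and its tangent line at $r=0$ from above; Theorem~\ref{t:variationalformula} identifies the tangent slope, giving the upper bound $\psi(r)\le\vol_n(K)^{1/n}\bigl(1-r\,h_{\P K}(\bar\theta)/(n\vol_n(K))\bigr)$. Substituting the lower bound into the defining integral of $\rho_{\Rp K}(\bar\theta)^q$ and the upper bound into the one for $\rho_{\Rp K}(\bar\theta)^p$, and evaluating the Beta integral $\int_0^1 p\,u^{p-1}(1-u)^n\,du = 1/\binom{p+n}{n}$, yields respectively the first inclusion (take $p=q$) and the third (using $\rho_{\PP K}(\bar\theta)=1/h_{\P K}(\bar\theta)$).

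\textbf{Middle inclusion and equality.} After normalizing $t=r/R(\bar\theta)$ and setting $\tilde\psi(t):=\psi(R(\bar\theta)t)/\vol_n(K)^{1/n}$, a concave function $[0,1]\to[0,1]$ with $\tilde\psi(0)=1$ and $\tilde\psi(1)=0$, the middle inequality reduces to the monotonicity in $p$ of
\[
M_p(\tilde\psi) \;:=\; \left(\binom{p+n}{n}\int_0^1 p\,t^{p-1}\tilde\psi(t)^n\,dt\right)^{1/p}
\]
on $(-1,\infty)\setminus\{0\}$. This is precisely Berwald's inequality for concave functions, which I would invoke as the main technical black box. For equality, all three inclusions collapse to the single condition that $\psi$ is affine on $[0,R(\bar\theta)]$; the equality case of Brunn--Minkowski applied to the convex set in the preliminary step then pins $K$ down as an $n$-dimensional simplex, the converse being a direct computation.

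\textbf{Main obstacle.} The chief technical difficulty is handling the parameter range $p\in(-1,\infty)\setminus\{0\}$ uniformly: for $p<0$ the sign of $p\,r^{p-1}$ flips, integrability of the integrand near $r=R(\bar\theta)$ (where $\Cov$ vanishes) must be justified from the concavity of $\psi$, and Berwald's inequality must be invoked in the form covering negative exponents. Once these analytic subtleties are dispatched, the rest is a transcription of the Gardner--Zhang argument \cite[Theorem 5.5]{GZ98} for $\hid=1$, with $\cov$ and $\pp K$ replaced throughout by $\Cov$ and $\PP K$.
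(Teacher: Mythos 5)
Your proposal is correct and rests on the same two pillars as the paper's proof: the Mellin-transform representation of $\rho_{R^\hid_p K}(\bar\theta)^p$ in terms of the covariogram along the ray $r\bar\theta$, and Berwald's inequality applied to the $1/n$-concave function $r\mapsto\Cov(r\bar\theta)$, with the equality case reduced (exactly as in the paper's Proposition~\ref{p:simp}) to the Choquet/Rogers--Shephard characterization of the simplex by homothetic sections $K\cap(K+x)$ --- note that Brunn--Minkowski equality only yields homothety of the sections, so that separate cited characterization is still needed to conclude ``simplex.'' The one presentational difference is that you obtain the two outer inclusions by direct chord/tangent bounds on $\Cov(r\bar\theta)^{1/n}$ plus a Beta-integral evaluation, whereas the paper extracts all three inclusions from the single monotone quantity $G_K(p;\bar\theta)=\binom{p+n}{n}^{1/p}\rho_{R^\hid_p K}(\bar\theta)$ of the Mellin--Berwald inequality (Lemma~\ref{l:mono}), reading off the endpoints as the limits $p\to\infty$ and $p\to(-1)^+$; your version makes the endpoint inclusions self-contained, while the paper's makes the equality analysis uniform across all three inclusions. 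One correction to your obstacle analysis: for $p\in(-1,0)$ the representation must be replaced by $\rho_{R^\hid_p K}(\bar\theta)^p=p\int_0^{R}\bigl(\Cov(r\bar\theta)/\vol_n(K)-1\bigr)r^{p-1}\,dr+R^p$ with $R=\rho_{D^\hid(K)}(\bar\theta)$ (the paper's Proposition~\ref{p:other_form}), because the divergence to be tamed sits at $r=0$, where $r^{p-1}$ is non-integrable against $\Cov(r\bar\theta)\to\vol_n(K)>0$, and not at $r=R$, where the integrand is bounded.
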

As a corollary of Theorem~\ref{t:set_con}, we establish the following. 
\begin{corollary}[The mth-order Zhang's projection inequality]
\label{cor:zhanginequality}
    Let $K\in\conbod$ and $\hid\in\mathbb{N}$. Then,
    \[\vol_n(K)^{n\hid-\hid}\vol_{n\hid}\left( \PP K \right) \geq \frac{1}{n^{n\hid}}\binom{n\hid+n}{n},\]
    with equality if and only if $K$ is a $n$-dimensional simplex.
\end{corollary}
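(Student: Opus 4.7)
The plan is to specialize the rightmost inclusion of Theorem~\ref{t:set_con} at $p=n\hid$, which gives
\[
\binom{n\hid+n}{n}^{1/(n\hid)} R^\hid_{n\hid} K \;\subseteq\; n\vol_n(K)\,\PP K.
\]
Taking $n\hid$-dimensional volumes of both sides yields
\[
\binom{n\hid+n}{n}\,\vol_{n\hid}(R^\hid_{n\hid}K) \;\leq\; n^{n\hid}\vol_n(K)^{n\hid}\,\vol_{n\hid}(\PP K),
\]
so the corollary is reduced to the volume identity $\vol_{n\hid}(R^\hid_{n\hid}K) = \vol_n(K)^{\hid}$, a higher-order analogue of the classical formula $\vol_n(R_n K) = \vol_n(K)$ of Gardner and Zhang which is precisely what makes $p=n\hid$ the right choice of exponent.

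To verify this identity, I anticipate that in Section~\ref{sec:new_rad} the bodies $R^\hid_p K$ will be defined by the natural radial formula
\[
\rho_{R^\hid_p K}(\bar u)^p = \frac{p}{\vol_n(K)}\int_0^{\infty} t^{p-1}\,\Cov(t\bar u)\,dt, \qquad \bar u \in \S,
\]
generalizing the Gardner--Zhang construction. At $p = n\hid$, polar coordinates on $\Rnhi$ produce
\[
\vol_{n\hid}(R^\hid_{n\hid}K) = \frac{1}{n\hid}\int_\S \rho_{R^\hid_{n\hid}K}(\bar u)^{n\hid}\,d\bar u = \frac{1}{\vol_n(K)}\int_{\Rnhi}\Cov(\bar x)\,d\bar x,
\]
and a $(\hid+1)$-fold Fubini applied to the representation $\Cov(\bar x) = \int_\Rn \chi_K(y)\prod_{i=1}^{\hid}\chi_K(y-x_i)\,dy$, integrating each $x_i$-variable separately to $\vol_n(K)$ and then $y$ against $\chi_K$, evaluates $\int_{\Rnhi}\Cov(\bar x)\,d\bar x = \vol_n(K)^{\hid+1}$, giving $\vol_{n\hid}(R^\hid_{n\hid}K) = \vol_n(K)^{\hid}$. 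The equality characterization for the corollary is then inherited for free from the equality clause of Theorem~\ref{t:set_con}, since each passage in the reduction preserves equality.

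The only piece of care I foresee is lining up the normalization chosen for $R^\hid_p K$ in Section~\ref{sec:new_rad} with the constants $\binom{p+n}{n}^{1/p}$ and $n\vol_n(K)$ that appear in Theorem~\ref{t:set_con}, so that no stray factors contaminate the volume identity $\vol_{n\hid}(R^\hid_{n\hid}K) = \vol_n(K)^\hid$. This is bookkeeping rather than a substantive obstacle: once the constants match, the proof of Corollary~\ref{cor:zhanginequality} reduces to the two-line volume calculation above combined with the Fubini identity.
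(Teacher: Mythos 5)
Your proposal is correct and follows essentially the same route as the paper: the paper also specializes Theorem~\ref{t:set_con} at $p=n\hid$ and combines it with the volume identity $\vol_{n\hid}(R^\hid_{n\hid}K)=\vol_n(K)^\hid$ (the paper's Proposition~\ref{radial_com}), which it proves by exactly the polar-coordinates-plus-Fubini computation you describe, using the Mellin representation of $\rho_{R^\hid_p K}$ from Proposition~\ref{p:other_form}. The normalization you anticipated matches the paper's, and the equality characterization is inherited from Theorem~\ref{t:set_con} just as you say.
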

We then have the case of the $\hid$th-order Petty's projection inequality, which we prove using a symmetrization technique.
\begin{theorem}[The mth-order Petty's projection inequality]\label{t:pettyprojectioninequality}
Let $\hid \in \N$ be fixed. Then, for every $K \in\conbod,$ one has 
\[
\vol_n(K)^{n\hid-\hid}\vol_{n\hid}(\PP K) \leq \vol_{n}(\B)^{n\hid-\hid}\vol_{n\hid}(\PP \B),
\]
with equality if and only if $K$ is an ellipsoid.
\end{theorem}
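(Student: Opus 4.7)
The plan is to adapt the classical Steiner symmetrization proof of Petty's projection inequality to the higher-order setting. By the affine invariance of the functional $F(K) = \vol_n(K)^{n\hid - \hid}\vol_{n\hid}(\PP K)$ (Proposition~\ref{p:affine_invar_Zhang}), I can normalize $\vol_n(K) = \vol_n(\B)$; it then suffices to prove $\vol_{n\hid}(\PP K) \leq \vol_{n\hid}(\PP \B)$. The route to this inequality goes through showing that $F$ is monotone nondecreasing under Steiner symmetrization of $K$, and then invoking the classical fact of Mani--Levitska that a suitable sequence of Steiner symmetrizations of $K$ converges to a Euclidean ball of the same volume in the Hausdorff metric. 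Continuity of $\P$ (and hence of $\PP$) and of $\vol_{n\hid}$ under Hausdorff convergence will then yield $F(K) \leq F(\B)$.

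The key step is the following symmetrization lemma: for every $u \in \s$ and every $K \in \conbod$, setting $\bar u = u_1 = (u,o,\dots,o) \in \S$,
$$S_{\bar u}\, \PP K \;\subseteq\; \PP(S_u K),$$
where $S_u$ denotes Steiner symmetrization of $K\subset\R^n$ about $u^\perp$ and $S_{\bar u}$ denotes Steiner symmetrization of $\PP K \subset \Rnhi$ about $\bar u^\perp$. Since Steiner symmetrization preserves volume and volume respects inclusions, this immediately gives $\vol_{n\hid}(\PP K) = \vol_{n\hid}(S_{\bar u}\PP K) \leq \vol_{n\hid}(\PP S_u K)$, while $\vol_n(K) = \vol_n(S_u K)$, so $F(K) \leq F(S_u K)$.

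The main obstacle is establishing this symmetrization inclusion for the higher-order polar projection body, whose support function $h_{\P K}(\bar\theta) = \int_{\partial K}\max_{1\leq i \leq \hid}\langle\theta_i,n_K(y)\rangle_- dy$ (from Theorem~\ref{t:variationalformula} and Definition~\ref{def:higher_proj}) involves a maximum rather than a single linear integrand as in the classical case. I plan to pass to the dual statement on radial functions and reduce to a chord-length argument aligned with $\bar u$: using the translation invariance $\P(K+x) = \P K$ together with the fact that Steiner symmetrization of $K$ in direction $u$ is realized by sliding the upper and lower halves of $K$ relative to $u^\perp$, one shows that the $\bar u$-chord of $\PP K$ over each $\bar z \in \bar u^\perp$ can be recentered to fit inside $\PP(S_u K)$. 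The equality case will be extracted by tracking rigidity through the symmetrizations: equality throughout forces $K$ to be symmetric, after translation, about every hyperplane $u^\perp$, and a classical characterization of ellipsoids then completes the proof.
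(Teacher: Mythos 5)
Your overall architecture (symmetrize, iterate, pass to the limit by continuity of $\P$) matches the paper's, but the key lemma you propose is false, and the failure is structural rather than technical. You symmetrize $\PP K\subset\Rnhi$ by the \emph{classical} Steiner symmetrization about the hyperplane $\bar u^{\perp}$ with $\bar u=(u,o,\dots,o)$, and claim $S_{\bar u}\PP K\subseteq \PP (S_uK)$. Test this on $K=B_2^2$ with $n=\hid=2$ and $u=e_2$: by Example~\ref{ex:ball}, $h_{\Pi^{2}B_2^2}(\theta_1,\theta_2)=|\theta_1|+|\theta_2|+|\theta_1-\theta_2|$, and taking $\theta_1=\theta_2=e_2$ gives the value $2$, while reflecting only the first component about $e_2^{\perp}$ gives $h_{\Pi^2B_2^2}(-e_2,e_2)=4$. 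Hence $\Pi^{2}B_2^2$, and therefore $\Pi^{\circ,2}B_2^2$, is \emph{not} symmetric about the hyperplane $\bar u^{\perp}$ in $\R^4$, so $S_{\bar u}\Pi^{\circ,2}B_2^2\neq \Pi^{\circ,2}B_2^2=\Pi^{\circ,2}(S_uB_2^2)$. Since classical Steiner symmetrization preserves volume, your claimed inclusion would force these two convex bodies of equal volume to coincide --- a contradiction. The underlying reason is that $\P$ intertwines a reflection $R_u$ of $\Rn$ with $\overline{R_u}$, which reflects \emph{all} $\hid$ blocks simultaneously (Proposition~\ref{prop:linear:transformations}); so $\PP(S_uK)$ is symmetric about the codimension-$\hid$ subspace $(u^{\perp})^{\hid}$, not about the codimension-$1$ hyperplane $\bar u^{\perp}$. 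Relatedly, in the support-function computation the point $\bar z+\tfrac{t-s}{2}\bar u$ produces, in the blocks $j\geq 2$, terms of the form $-\langle x_j,\nabla \u z\rangle -t_j$ that are not dominated by any of the terms available from the two hypotheses, so the subadditivity trick for $\max$ breaks down.

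The fix is to replace the one-dimensional symmetrization of $\PP K$ by the fibre (higher-order) Steiner symmetrization $\bar S_{\xi}$ of Definition~\ref{d:multidimSteinerSymmetrization}: over each base point in $(\xi^{\perp})^{\hid}$ one replaces the $\hid$-dimensional slice $L\cap(\bar x+\langle\xi\rangle^{\hid})$ by half its difference body. With this operation the inclusion $\bar S_{\xi}\PP K\subseteq \PP S_{\xi}K$ does hold (Lemma~\ref{l:multidimSteiner}), precisely because all $\hid$ heights are shifted by matched amounts $r_i=\tfrac12(t_i-s_i)$. But note a second point your proposal takes for granted: this symmetrization does \emph{not} preserve $\vol_{n\hid}$; it only satisfies $\vol_{n\hid}(\bar S_{\xi}L)\geq\vol_{n\hid}(L)$, by \eqref{eq:steiner_vol_increase}, which is still the correct direction for the chain $\vol_{n\hid}(\PP K)\leq\vol_{n\hid}(\bar S_{\xi}\PP K)\leq\vol_{n\hid}(\PP S_{\xi}K)$. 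Finally, your equality analysis ("symmetric about every hyperplane after translation, hence an ellipsoid") is also not right as stated --- that condition characterizes balls, and in any case equality in the volume chain does not directly give such symmetry. The paper instead extracts rigidity by projecting onto the diagonal copy of $\Rn$ (Lemma~\ref{l:permutationofvariables}) to deduce $\pp S_{\xi}K=S_{\xi}\pp K$ for all $\xi$, thereby reducing to the equality case of the classical Petty projection inequality.
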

We then establish the following generalization of Petty's isoperimetric inequality, \eqref{eq:petty_theorem}. We denote by $w_n(K)$ the mean width of $K\in\conbod$ (see \eqref{eq:mean_wdith}).
\begin{theorem}
\label{t:petty_classic}
    Let $K\in\conbod$ and $\hid\in\mathbb{N}.$ Then, one has the following inequality:
    \begin{align*}\vol_{n\hid}(\PP K)\vol_{n-1}(\partial K)^{n\hid} &\geq  \vol_{n\hid}(\PP B_2^n)\vol_{n-1}( \s)^{n\hid} 
    \\
    &\geq \kappa_{n\hid}\left(\frac{n\kappa_{n}}{w_{n\hid}(\P\B)}\right)^{n\hid}.
    \end{align*}

Equality in the first inequality holds if and only if $\Pi K$ is an Euclidean ball. If $\hid=1$, there is equality in the second inequality, while for $\hid \geq 2,$ the second inequality is strict.
\end{theorem}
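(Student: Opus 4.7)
The plan is to prove the two inequalities separately. For the second inequality, I would apply Jensen's inequality (convexity of $x\mapsto x^{-n\hid}$) directly to $h_{\P\B}$:
\[
\vol_{n\hid}(\PP\B)=\frac{1}{n\hid}\int_{\S}h_{\P\B}(\bar\theta)^{-n\hid}\,d\sigma(\bar\theta)\geq \kappa_{n\hid}\,w_{n\hid}(\P\B)^{-n\hid},
\]
then multiply by $\vol_{n-1}(\s)^{n\hid}=(n\kappa_n)^{n\hid}$. Jensen is tight iff $\P\B$ is a Euclidean ball, which holds for $\hid=1$ (since $\P\B=\Pi\B=\kappa_{n-1}\B$) but fails for $\hid\geq 2$: evaluating $h_{\P\B}$ at a single-slot direction $(u,0,\dots,0)$ versus $(u_1,u_2,0,\dots,0)/\sqrt{2}$ produces distinct values.

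The first inequality is the more delicate half, and the plan is to prove it by an orbit-averaging argument under the diagonal action of $O(n)$ on $\Rnhi$ defined by $U\cdot(\theta_1,\dots,\theta_\hid)=(U\theta_1,\dots,U\theta_\hid)$. Setting $\bar h_K(\bar\theta):=\int_{O(n)}h_{\P K}(U\bar\theta)\,dU$, Jensen on each $O(n)$-orbit of $\S$ followed by integration over the orbit space gives
\[
\vol_{n\hid}(\PP K)=\frac{1}{n\hid}\int_{\S}h_{\P K}^{-n\hid}\,d\sigma\geq \frac{1}{n\hid}\int_{\S}\bar h_K^{-n\hid}\,d\sigma=\vol_{n\hid}(\bar L_K^\circ),
\]
where $\bar L_K:=\int_{O(n)} U^{-1}(\P K)\,dU$ is the Minkowski $O(n)$-average (a convex body with support function $\bar h_K$). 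The crucial step is a Fubini computation: since $U^{-1}n$ is uniformly distributed on $\s$ as $U$ runs over $O(n)$ under the Haar measure (for any fixed $n\in\s$),
\[
\bar h_K(\bar\theta)=\int_{\partial K}\frac{1}{n\kappa_n}\int_{\s}\max_{1\leq i\leq \hid}\langle\theta_i,u\rangle_{-}\,d\sigma(u)\,dy=\frac{\vol_{n-1}(\partial K)}{\vol_{n-1}(\s)}\,h_{\P\B}(\bar\theta).
\]
Therefore $\bar L_K=\frac{\vol_{n-1}(\partial K)}{\vol_{n-1}(\s)}\P\B$, so $\vol_{n\hid}(\bar L_K^\circ)=\left(\frac{\vol_{n-1}(\s)}{\vol_{n-1}(\partial K)}\right)^{n\hid}\vol_{n\hid}(\PP\B)$, and rearranging yields the first inequality.

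For the equality case in the first inequality, Jensen on every orbit is tight iff $h_{\P K}$ is $O(n)$-diagonal invariant; a change-of-variables argument against the surface area measure $S_K$ then forces $S_K$ to be $O(n)$-invariant on $\s$, so by Minkowski's uniqueness theorem $K$ is a Euclidean ball. The main obstacle I expect is the explicit identification of $\bar L_K$ with a scaled copy of $\P\B$: this Fubini-type identity is precisely what sharpens the orbit-Jensen into an inequality tight at the ball, whereas without it one would only recover the strictly weaker Urysohn bound underlying the second inequality for $\hid\geq 2$.
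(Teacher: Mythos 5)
Your treatment of both inequalities is essentially the paper's own argument in different clothing. For the first inequality the paper likewise starts from the polar volume formula $\vol_{n\hid}(\PP K)=\frac 1{n\hid}\int_{\S}h_{\P K}(\bar\theta)^{-n\hid}d\bar\theta$, averages over the diagonal $O(n)$-action (using invariance of the spherical measure under $\overline T$), applies Fubini and Jensen, and then uses the fact that $T^tu$ is uniformly distributed on $\s$ to evaluate the inner average as $\vol_{n-1}(\partial K)\,\kappa_n^{-1}V(\B[n-1],C_{-\bar\theta})$ --- which is precisely your identification $\bar L_K=\frac{\vol_{n-1}(\partial K)}{\vol_{n-1}(\s)}\P\B$. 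The second inequality is in both cases Jensen (the polar Urysohn inequality) applied over $\S$, with the same observation that $\P\B$ is a Euclidean ball exactly when $\hid=1$.

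The one genuine gap is in your equality analysis for the first inequality. Tightness of Jensen does give that $h_{\P K}$ is invariant under the diagonal action, i.e.\ $\int_{\s}h_{C_{-\bar\theta}}(u)\,d\sigma_K(Tu)=\int_{\s}h_{C_{-\bar\theta}}(u)\,d\sigma_K(u)$ for all $T\in O(n)$ and all $\bar\theta$. But to conclude that $\sigma_K$ itself is rotation-invariant you need the linear span of the test functions $\{u\mapsto h_{C_{-\bar\theta}}(u)\}_{\bar\theta}$ to be dense in $C(\s)$, and this is not automatic: for $\hid=1$ the span of $\langle\theta,\cdot\rangle_-=\frac12\left(|\langle\theta,\cdot\rangle|-\langle\theta,\cdot\rangle\right)$ captures only the even part of a measure together with its barycenter, so diagonal invariance of $h_{\Pi K}$ yields only that $\Pi K$ is a ball (constant brightness), not that $\sigma_K$ is rotation-invariant. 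The paper avoids claiming your implication: it specializes the invariance to single-slot directions $\bar\theta=(\theta,o,\dots,o)$ to deduce that $\Pi K$ is a ball and then invokes an injectivity statement for the projection body map. If you want to keep your route you must actually establish the density (equivalently, that $\P$ determines $\sigma_K$) for the relevant $\hid$; as written, the step ``forces $S_K$ to be $O(n)$-invariant'' would fail at least for $\hid=1$, and some injectivity-type input is unavoidable in either approach.
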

\noindent In fact, the first inequality in Theorem~\ref{t:petty_classic}, in conjunction with Theorem~\ref{t:pettyprojectioninequality}, implies the classical isoperimetric inequality in $\Rn$ regardless of the choice of $\hid\in\N$.

There have been, of course, other generalizations of the Rogers-Shephard inequality (see e.g. \cite{AHNRZ,Ro20} for measure-theoretic extensions, \cite{AGLYN22} for an extension to the lattice point enumerator and \cite{AG19,AlGMJV,AAEFO15,Co06} for functional extensions) and Petty's projection inequality (see e.g. \cite{Zhang91} for all compact Borel sets, \cite{YL21} for sets of finite perimeter, \cite{LYZ10} for the Orlicz setting and \cite{HL25} for the fractional calculus-setting). Zhang's projection inequality has also been recently extended to measures \cite{LRZ22}.

\subsection{Centroid bodies and dual mixed volumes}
\label{sec:cent_mix}
Given a compact set $A\subset \Rn$ with positive volume, its \textit{centroid body} $\g A \in \conbod$ is defined by the support function
\[
    h_{\g A}(\theta) = \vol_n(A)^{-1} \int_A |\langle x, \theta \rangle| dx, \quad \theta\in\s.
\]
Petty's inequality (the right-hand side of \eqref{e:Zhang_petty_ineq}) implies via a duality argument (see \eqref{eq:og_dual}) that the continuous and affine invariant operator $K \mapsto \vol_n(\g K)\vol_n(K)^{-1}$ on $\conbod$ is minimized by centered ellipsoids. This lower bound is known as the \textbf{Busemann-Petty centroid inequality}.

The Busemann-Petty centroid inequality can be interpreted as a bound on a random process.
Let $X_i \in \Rn, i=1, \ldots n,$ be independent random vectors uniformly distributed inside $K$.
We denote the expected volume of $C_{\bar X}=\conv_{1\leq i \leq \hid}[o,X_i]$, a \textit{random simplex of $K$}, by
\begin{equation}
    \E_{K^n}(\vol_n(C_{\bar X})):= \vol_n(K)^{-n} \int_K\cdots \int_K \vol_n\left(\conv_{1\leq i \leq n}[o,x_i]\right) dx_1\dots dx_n.
    \label{eq:random}
\end{equation}
Petty \cite{petty61_2} showed the right-hand side equals $2^{-n} \vol_n(\Gamma K)$.
Thus, the Busemann-Petty centroid inequality is equivalent to the \textbf{Busemann random simplex inequality} \cite{Busemann53}:
\begin{equation}
    \E_{K^n}(\vol_n(C_{\bar X}))\vol_n(K)^{-1}\geq \left(\frac{\kappa_{n-1}}{(n+1)\kappa_n}\right)^n,
\end{equation}
with equality if and only if $K$ is a centered ellipsoid. 
    Our last main result is the mth-order extension of the centroid body. We will define the \textit{$\hid$th-order centroid body of $L\in\Conbod$,} denoted $\G  L,$ via a duality relation with $\PP K$ for $K\in\conbod$ (see Section~\ref{sec:petty}). 
\begin{definition}
\label{def:mth_centroid}
        For a compact set $L \subset \Rnhi$ with positive volume, its $\hid$-centroid body $\G  L \in \conbod$ is given by the support function
        \begin{equation}
        h_{\G  L}(\theta) = \frac{1}{\vol_{n\hid}(L)}\int_L \max_{1\leq i \leq \hid} \langle x_i, \theta \rangle_{-} d\bar{x},
        \label{eq:cen_hi}
        \end{equation}
        where $\bar x = (x_1, \ldots, x_\hid)$ and $\theta\in\s$.        
\end{definition}
Like in the case of the $m$th-order projection body, the fact that $\theta\mapsto h_{\G L}(\theta)$ is sublinear and strictly positive yields that $\G L$ exists and is a convex body. We emphasize that the operator $\G$ sends a $n\hid$-dimensional compact set to a $n$-dimensional convex body, which is dual to the behavior of the operator $\PP.$ We obtain the $\hid$th-order Busemann-Petty centroid inequality, where a new convex body plays the role of minimizer. We will actually prove the result for a class of compact sets larger than the class of convex bodies having the origin in their interior, the so-called star bodies (see Section~\ref{sec:properties} for a precise definition). We will use $\sta$ to denote the class of star bodies in $\Rnhi.$
\begin{theorem}[The mth-order Busemann-Petty centroid inequality] 
\label{t:BPCH}
For $L\in\sta$, where $n,\hid\in\mathbb{N},$ one has
    \[\frac{\vol_n(\G  L)}{\vol_{n\hid}(L)^{1/\hid}} \geq \frac{\vol_n(\G \PP\B)}{\vol_{n\hid}(\PP\B)^{1/\hid}}, \]
    with equality if and only if $L = \PP E$ for any ellipsoid $E\in\conbod$.
\end{theorem}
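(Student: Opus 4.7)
The plan is to combine a mixed-volume identity coupling $\G$ and $\P$ with Petty's projection inequality for higher-order projection bodies (Theorem~\ref{t:pettyprojectioninequality}), imitating the way the classical Busemann--Petty centroid inequality is derived from the classical Petty projection inequality. First, for any $K\in\conbod$ and $L\in\sta$, I would verify the key identity
\[
V(K[n-1], \G L) = \frac{1}{n\,\vol_{n\hid}(L)}\int_L h_{\P K}(\bar x)\,d\bar x
\]
by writing $V(K[n-1], \G L) = \frac{1}{n}\int_\s h_{\G L}(u)\,dS_{n-1}(K,u)$, inserting the definition of $h_{\G L}$, swapping the two integrations via Fubini, and recognizing the inner integral $\int_\s \max_i\langle x_i,u\rangle_-\,dS_{n-1}(K,u)$ as $h_{\P K}(\bar x)$ by Definition~\ref{def:higher_proj}.

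Next I would rewrite the right-hand side in radial coordinates on $\Rnhi$: using the $1$-homogeneity of $h_{\P K}$ together with $h_{\P K}(\bar u) = 1/\rho_{\PP K}(\bar u)$,
\[
\int_L h_{\P K}(\bar x)\,d\bar x = \frac{1}{n\hid+1}\int_\S \frac{\rho_L(\bar u)^{n\hid+1}}{\rho_{\PP K}(\bar u)}\,d\bar u.
\]
An application of H\"older's inequality with exponents $(n\hid+1)/n\hid$ and $n\hid+1$ to the factorization $\rho_L^{n\hid} = \bigl(\rho_L^{n\hid+1}/\rho_{\PP K}\bigr)^{n\hid/(n\hid+1)}\,\rho_{\PP K}^{n\hid/(n\hid+1)}$, combined with the polar volume formula $\int_\S \rho_M^{n\hid}\,d\bar u = n\hid\,\vol_{n\hid}(M)$, then yields
\[
V(K[n-1], \G L) \geq \frac{\hid}{n\hid+1}\,\vol_{n\hid}(L)^{1/n\hid}\,\vol_{n\hid}(\PP K)^{-1/n\hid},
\]
with equality if and only if $L$ is a dilate of $\PP K$.

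To close the argument I would specialize to $K = \G L \in \conbod$, so that $V(\G L[n-1], \G L) = \vol_n(\G L)$, and then apply Theorem~\ref{t:pettyprojectioninequality} to $\G L$ in the complementary direction: $\vol_{n\hid}(\PP(\G L))^{-1/n\hid} \geq \vol_n(\G L)^{(n-1)/n}\,\vol_n(\B)^{-(n-1)/n}\,\vol_{n\hid}(\PP\B)^{-1/n\hid}$, with equality iff $\G L$ is an ellipsoid. Substituting and solving for $\vol_n(\G L)$,
\[
\vol_n(\G L)\,\vol_{n\hid}(L)^{-1/\hid} \geq \Bigl(\tfrac{\hid}{n\hid+1}\Bigr)^n \vol_n(\B)^{-(n-1)}\,\vol_{n\hid}(\PP\B)^{-1/\hid};
\]
the constant on the right must equal $\vol_n(\G\PP\B)\,\vol_{n\hid}(\PP\B)^{-1/\hid}$ because both H\"older and Petty are saturated at $L=\PP\B$ (H\"older since $\G\PP\B$ is a ball by $O(n)$-symmetry, making $\PP(\G\PP\B)$ a dilate of $\PP\B$; Petty since $\G\PP\B$ is an ellipsoid), so the final bound is an equality there. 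For equality cases in general, the H\"older condition forces $L$ to be a dilate of $\PP(\G L)$ and Petty forces $\G L$ to be an ellipsoid $E$; together with the homogeneity $\PP(\lambda E) = \lambda^{-(n-1)}\PP E$ this is equivalent to $L = \PP E'$ for some ellipsoid $E'$. The converse follows from the $GL(n)$-equivariance of $\G$ and $\PP$ under block-diagonal transformations on $\Rnhi$ combined with $\G\PP\B$ being a ball. The principal subtlety I anticipate is the clean verification of the mixed-volume identity in the first step together with the careful bookkeeping of equality conditions across the H\"older and Petty steps; the remaining synthesis is routine.
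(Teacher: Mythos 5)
Your proposal is correct and follows essentially the same route as the paper: your mixed-volume identity is exactly Lemma~\ref{lem_duality} (rewritten via polar coordinates), your H\"older step is the dual Minkowski first inequality \eqref{dual_Min_first} that the paper uses to obtain \eqref{eq_duality_2}, and the final step of applying Theorem~\ref{t:pettyprojectioninequality} to $\G L$ and identifying the constant by saturation at $L=\PP\B$ is precisely how the paper concludes and derives \eqref{eq:ball_cent}. The bookkeeping of equality cases also matches the paper's.
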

\noindent Other prominent extensions of the Busemann-Petty centroid inequality have been established in the setting of the Firey-Brunn-Minkowski theory \cite{LYZ00}, functions \cite{HJS21} and the Orlicz-Brunn-Minkowski theory \cite{LYZ10_2}.

We note that $\Gamma^1 L$ is not the usual centroid body $\Gamma L$, but instead the asymmetric $L^1$ centroid body studied by Haberl and Schuster \cite{HS09}. The reason we recover the asymmetric case and not the usual case is because we defined our operators in terms of $[o,-\theta]$ instead of $[-\frac{\theta}{2},\frac{\theta}{2}]$. While this preserves the mixed volumes defining $\p K$, it will not preserve the dual mixed volumes. However, the $m=1$ case of Theorem~\ref{t:BPCH} (which is the $L^1$ case of Haberl and Schuster's asymmetric Busemann-Petty centroid inequality) is sharper than the usual Busemann-Petty centroid inequality. Indeed, notice that $\Gamma^1 (-L)=-\Gamma^1L$, and, therefore, $\Gamma L=D(\Gamma^1 L)$. Then, combining Theorem~\ref{t:BPCH} with the left-hand side \eqref{e:roger_shep}, which is merely the Brunn-Minkowski inequality, yields the usual Busemann-Petty centroid inequality.

After Busemann, Groemer generalized the random simplex inequality. First, he proved in \cite{Groemer73} that the expected value, as well as the higher order moments, of the volume of the closed convex hull of $(n+1)$ points inside $K\in\conbod$ is minimized when $K$ is an ellipsoid. Then, Groemer \cite{Groemer74} extended this result to the case where the number of points is allowed to be greater than or equal to $(n+1)$. Hartzoulaki and Paouris \cite{HP03} proved an analogous result for the expected value of the Querma{\ss}integrals.

There have been many other recent extensions of Groemer's results, see for example \cite{APPS24,APS24,CEFPP15,PP12,PP13-1,PP13-2,PP17-1,PP17-2,PPZ14}. A common feature of all Groemer-type results is that the points $X_i$ are always taken independently, so that the vector $(X_1, \ldots, X_\hid) \in \Rnhi$ is distributed with respect to a product probability measure on $\Rnhi=\Rn \times \cdots \times \Rn$. Theorem \ref{t:BPCH} lets us obtain a new Groemer-type result, generalizing \cite{HP03} for the mean width. Fix $K \in \conbod, L \in \sta$ and let $\bar X = (X_1, \ldots, X_\hid) \in \Rnhi$ be a random vector uniformly distributed inside $L$, (no independence of the $X_i$ is required).
We denote the expected mixed volume of $K$ and $C_{\bar X}$ by
\[\E_L (V_n(K[n-1], C_{\bar X}):=\frac{1}{\vol_{n\hid}(L)}\int_L V_n(K[n-1],C_{\bar x}) d\bar x.\]

\begin{theorem}
	\label{t:randomsimplex1}
	The functional
	\[(K,L) \in \conbod \times \sta \mapsto \vol_{n\hid}(L)^{-\frac{1}{n\hid}}\vol_n(K)^{- \frac{n-1}n} \E_L (V_n(K[n-1], C_{\bar X}) )\]
	is uniquely minimized when $K$ is an ellipsoid and $L = \lambda \PP K$ for some $\lambda >0$.
\end{theorem}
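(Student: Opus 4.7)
The plan is to combine Theorem \ref{t:BPCH} with Minkowski's first mixed-volume inequality, bridged by a Fubini-type identity that rewrites the expectation as a single mixed volume. First I would establish
\[
\E_L\bigl(V(K[n-1], C_{\bar X})\bigr) = V(K[n-1], -\G L).
\]
Using the representation $V(K[n-1], C_{\bar x}) = \tfrac{1}{n}\int_{\partial K} h_{C_{\bar x}}(n_K(y))\,dy$ and the identity $h_{C_{\bar x}}(v) = \max_i \langle x_i, v\rangle_+$ (valid since $o \in C_{\bar x}$), Fubini turns the left-hand side into $\tfrac{1}{n\vol_{n\hid}(L)}\int_{\partial K} \int_L \max_i \langle x_i, n_K(y)\rangle_+ \,d\bar x\,dy$. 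The elementary identity $\langle x_i, v\rangle_+ = \langle x_i, -v\rangle_-$ together with the definition of the higher-order centroid body recognize the inner integral as $\vol_{n\hid}(L)\,h_{\G L}(-n_K(y)) = \vol_{n\hid}(L)\,h_{-\G L}(n_K(y))$, and substitution back yields the identity.

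Next I would chain two sharp inequalities. Minkowski's first inequality yields
\[
V(K[n-1], -\G L) \geq \vol_n(K)^{\frac{n-1}{n}}\,\vol_n(\G L)^{\frac{1}{n}},
\]
with equality if and only if $K$ is homothetic to $-\G L$, while Theorem \ref{t:BPCH} supplies $\vol_n(\G L)^{1/n} \geq c_{n,\hid}\,\vol_{n\hid}(L)^{1/(n\hid)}$ with equality if and only if $L = \PP E$ for some ellipsoid $E$. Multiplying and rearranging produces the asserted lower bound on the functional.

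The main obstacle is identifying the equality case, which requires showing that $\G\PP E$ is itself a positive dilate of $E$ whenever $E$ is an ellipsoid. For $E = \B$ both operators commute with the block-diagonal action of $O(n)$ on $\Rnhi$, forcing $\G\PP\B$ to be a Euclidean ball, say $r\B$. For a general centered $E = T\B$ I would track the operators' covariance: writing $\widetilde S(x_1,\dots,x_\hid) := (Sx_1,\dots,Sx_\hid)$, the mixed-volume transformation rule gives $\P(TK) = |\det T|\,\widetilde{T^{-*}}\P K$ (hence $\PP(TK) = |\det T|^{-1}\widetilde T\,\PP K$), and a linear change of variables gives $\G(\widetilde T L) = T\G L$; these combine into $\G\PP(TK) = |\det T|^{-1} T\,\G\PP K$. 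Specializing at $K = \B$ yields $\G\PP E = r|\det T|^{-1}E$. Since $-E = E$ for a centered ellipsoid, the equality condition ``$K$ homothetic to $-\G L = -\G\PP E$'' forces $K$ to be homothetic to $E$, so $K$ itself is an ellipsoid; the homogeneity $\PP(aK) = a^{-(n-1)}\PP K$ together with translation invariance of $\P$ then identifies $L = \lambda\PP K$ for some $\lambda > 0$. The converse direction follows by reversing these identifications.
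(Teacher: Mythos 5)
Your proposal is correct and follows essentially the same route as the paper: the Fubini identity rewriting the expectation as a mixed volume (the paper's Proposition~\ref{p:randomsimplex1}, stated there as $V(-K[n-1],\G L)$, which equals your $V(K[n-1],-\G L)$), then Minkowski's first inequality chained with Theorem~\ref{t:BPCH}, with the equality case resolved by showing $\G\PP E$ is a dilate of $E$ (the paper's Lemma~\ref{l:class}, which you reprove via the same covariance computations). No gaps.
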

\noindent We would like to emphasize that Theorem \ref{t:randomsimplex1} is not a generalization nor a particular case of the results in \cite{HP03,PP12, PP17-2}, since we are minimizing in a different class of probability measures.

The convex body $\PP \B \subseteq \Rnhi$ has some interesting properties that we investigate in Section \ref{sec:prop_proj}. Recalling that $w_n$ denotes the mean width, the support function of $\PP\B$ is
\[h_{\P\B}(\bar x) = n\kappa_n w_n(C_{\bar x}).\]
In fact, a special case of Theorem~\ref{t:randomsimplex1} is that the functional 
\begin{equation}
    \label{eq:randomPB}
    \vol_{n\hid}(L)^{-\frac{1}{n\hid}}\E_L(w_n(C_{\bar X}))=\vol_n(L)^{- \frac {n\hid+1}{n\hid}} \int_L w_n( C_{\bar x}) d \bar x
\end{equation}
is minimized for $L = \PP\B$ over $\sta$.
This fact can be regarded as a result of minimization of a random process in the spirit of the Busemann random simplex inequality.
Actually, \eqref{eq:randomPB} represents the expected value of the mean width of $C_{\bar x}$, assuming that $\bar x$ is distributed uniformly on $L$.
If $L = K \times \cdots \times K$ we obtain exactly $\E_{K^\hid}(w_n(C_{\bar X}))$ as defined previously.

\subsection{Affine Sobolev inequalities}

Recall that the $L^p$ norm of an integrable function on $\Rn$ is given by
$$\|f\|_p=\left(\int_{\Rn}|f(x)|^pdx\right)^{1/p}.$$
We follow the standard notation of using $\nabla f$ to denote the gradient of a differentiable function. Recall also that $f$ belongs to the Sobolev space $W^{1,1}(\Rn)$ if it has a weak derivative, i.e. there exists measurable vector map $\nabla f:\Rn\to\Rn$ such that $|\nabla f|$ is integrable and satisfies 
$$\int_{\Rn} f(x) \text{ div} \psi(x) dx = - \int_{\Rn} \langle \nabla f(x), \psi(x) \rangle dx$$
for every compactly supported, smooth vector field $\psi:\Rn \to \Rn$ \cite{Evans}. 

Given a function $f \in W^{1,1}(\Rn)$ which is not zero almost everywhere, there exists a unique convex body $\langle f \rangle$ in $\Rn$ with center of mass at the origin, the LYZ body of $f$, that satisfies the following change of variables formula 
\begin{equation}\int_{\partial \langle f\rangle}g(n_K(y))dy=\int_{\Rn}g(-\nabla f(x))dx
\label{eq:LYZw}
\end{equation}
for every $1$-homogeneous function $g$ (see \cite{LYZ06}) on $\Rn\setminus\{o\}$. We remark that the epithet ``LYZ Body of $f\in W^{1,1}(\Rn)$", first introduced by Ludwig \cite{LUD}, is reserved for the origin-symmetric convex body given by the Blaschke body of $\langle f\rangle$ (see \eqref{eq:BL_body}); in which case, \eqref{eq:LYZw} would only hold for even, 1-homogeneous functions $g$. Lutwak, Yang and Zhang \cite{LYZ06} first defined both sets of bodies (though, the body we denote by $\langle f\rangle$ is actually the reflection of the body implied to exist by \cite[Lemma 4.1]{LYZ06}). The more general, not necessarily symmetric body, is important for our setting. Indeed, by setting $g=h_{C_{-\bar\theta}}$ in \eqref{eq:LYZw}, which is in general not an even function, we obtain the \textit{LYZ projection body of order $\hid$ of $f\in W^{1,1}(\Rn)$}, $\Pi^\hid \langle f\rangle$, defined via
\begin{equation}
h_{\LYZ{f}}(\bar \theta) = \int_{\R^n} \max_{1\leq i \leq \hid}\langle \nabla f(x), \theta_i \rangle_-dx.
\end{equation}

As an application of our results, we show the following affine Sobolev inequality. It generalizes the result of Zhang \cite{GZ99} in the $\hid=1$ case. The result we present here is actually a corollary of a more general result for functions of bounded variation (see Theorem~\ref{t:GeneralAffineSobolev}), which was first done by Wang \cite{TW12} in the $\hid=1$ case.
\begin{corollary}\label{cor:GeneralAffineSobolev} Fix $\hid,n \in \N$. Consider a compactly supported $f\in W^{1,1}(\Rn)$ which is not zero almost everywhere. Then, one has the geometric inequality
\[
\|f\|_{\frac{n}{n-1}} \vol_{n\hid}(\LYZP{f})^\frac{1}{n\hid} \leq \vol_{n\hid}(\PP{\B})^\frac{1}{n\hid}\kappa_n^{\frac{n-1}{n}}.
\]
Equivalently, by setting $
d_{n,\hid} := n\kappa_n\left(nm\vol_{n\hid}(\PP{\B})\right)^\frac{1}{n\hid}$, this is
\begin{equation*}
\left(\int_{\S} \left(\int_{\R^n} \max_{1\leq i \leq \hid}\langle \nabla f(z), \theta_i \rangle_- dz \right)^{-n\hid} d\bar\theta\right)^{-\frac{1}{n\hid}} d_{n,\hid} \geq  n\kappa_n^{\frac{1}{n}}\|f\|_{\frac{n}{n-1}}.
\end{equation*}
\end{corollary}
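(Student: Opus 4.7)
The plan is to follow Zhang's strategy from the classical ($\hid=1$) case, combining the coarea formula with Theorem~\ref{t:pettyprojectioninequality} and a dual Minkowski integral inequality. First I would treat the case $f\geq 0$; for general $f$ the reduction is nontrivial (unlike in Zhang's setting, $\P \LYZ{f}$ is not centrally symmetric when $\hid \geq 2$, so the identity $h_{\Pi\langle f\rangle} = h_{\Pi\langle |f|\rangle}$ is unavailable), and one must instead use the decomposition $f = f_+ - f_-$ together with the identity $h_{\LYZ{f}}(\bar\theta) = h_{\LYZ{f_+}}(\bar\theta) + h_{\LYZ{f_-}}(-\bar\theta)$. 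Assuming $f\geq 0$, applying the coarea formula to the positively $1$-homogeneous integrand $v \mapsto \max_i \langle v,\theta_i\rangle_-$ on the level sets $\{f > t\}$ (whose outward unit normal is $-\nabla f/|\nabla f|$) yields
\[
h_{\LYZ{f}}(\bar\theta) = \int_0^\infty \int_{\partial^*\{f>t\}} \max_{1\leq i\leq \hid}\langle \theta_i, -n_{\{f>t\}}(y)\rangle_-\, dH^{n-1}(y)\, dt = \int_0^\infty h_{\P\{f>t\}}(-\bar\theta)\, dt,
\]
where $\P E$ for a set of finite perimeter $E$ is defined via the reduced boundary; the appearance of $-\bar\theta$ corresponds to a reflection of $\P\{f>t\}$, which does not affect the volume of the polar.

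Next I would invoke the polar volume formula $\vol_{n\hid}(L^\circ) = (n\hid)^{-1}\int_{\S} h_L(\bar\theta)^{-n\hid}\,d\bar\theta$ together with the $L^{-n\hid}$ dual Minkowski integral inequality
\[
\left(\int_{\S} \left(\int_0^\infty g_t(\bar\theta)\, dt\right)^{-n\hid} d\bar\theta\right)^{-1/(n\hid)} \geq \int_0^\infty \left(\int_{\S} g_t(\bar\theta)^{-n\hid}\, d\bar\theta\right)^{-1/(n\hid)} dt,
\]
valid for positive $g_t$ (it is the integral form of the reverse Minkowski inequality $\|g+h\|_p \geq \|g\|_p + \|h\|_p$ for $p<0$). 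Applied with $g_t = h_{\P\{f>t\}}$ and absorbing the reflection, this yields
\[
\vol_{n\hid}(\LYZP{f})^{-1/(n\hid)} \geq \int_0^\infty \vol_{n\hid}(\PP\{f>t\})^{-1/(n\hid)} \, dt.
\]
Since $(n\hid-\hid)/(n\hid) = (n-1)/n$, Theorem~\ref{t:pettyprojectioninequality}, extended to sets of finite perimeter, yields the pointwise lower bound
\[
\vol_{n\hid}(\PP\{f>t\})^{-1/(n\hid)} \geq \kappa_n^{-(n-1)/n}\,\vol_{n\hid}(\PP \B)^{-1/(n\hid)}\,\vol_n(\{f>t\})^{(n-1)/n}.
\]
Finally, Minkowski's integral inequality applied to the layer cake representation $f = \int_0^\infty \chi_{\{f>t\}}\,dt$ gives $\|f\|_{n/(n-1)} \leq \int_0^\infty \vol_n(\{f>t\})^{(n-1)/n}\,dt$, and chaining these estimates produces the stated inequality after rearrangement.

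The main obstacle I anticipate is the extension of Theorem~\ref{t:pettyprojectioninequality} from convex bodies to arbitrary sets of finite perimeter, which I expect to be the substance of the BV-function theorem (Theorem~\ref{t:GeneralAffineSobolev}) alluded to in the excerpt. The natural strategy is either a direct Steiner-symmetrization argument adapted to the perimeter measure (mimicking the convex-body proof of Theorem~\ref{t:pettyprojectioninequality} but replacing $\partial K$ by $\partial^* E$), or approximation by convex bodies together with appropriate monotonicity for $\vol_{n\hid}(\PP \cdot)$. A secondary technicality is the reduction from general $f$ to $f\geq 0$, which requires applying the chain separately to $f_+$ and $f_-$ and exploiting the inequality for support functions of Minkowski-sum bodies; this is more delicate than in Zhang's argument precisely because $\P$ does not produce a centrally symmetric body when $\hid \geq 2$.
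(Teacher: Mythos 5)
Your proposal follows Zhang's original level-set strategy (coarea formula, reverse Minkowski inequality for the exponent $-n\hid$, Petty's inequality applied to each superlevel set, layer-cake), whereas the paper's proof is entirely different and much shorter: it applies Theorem~\ref{t:pettyprojectioninequality} once, to the single convex body $\langle f\rangle$, and then invokes the anisotropic (Gromov/Cordero-Erausquin--Nazaret--Villani) Sobolev inequality with $K=\langle f\rangle$, which via the change-of-variables property of the LYZ body collapses to $\|f\|_{n/(n-1)}\leq \vol_n(\langle f\rangle)^{(n-1)/n}$. The paper's route never touches level sets and needs Petty's inequality only for convex bodies; yours needs it for the sets $\{f>t\}$, which are in general non-convex sets of finite perimeter. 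Your handling of the sign of $f$ via $h_{\LYZ{f}}(\bar\theta)=h_{\LYZ{f_+}}(\bar\theta)+h_{\LYZ{f_-}}(-\bar\theta)$ and the coarea/reverse-Minkowski steps are fine.

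The genuine gap is the ingredient you yourself flag: the higher-order Petty projection inequality for sets of finite perimeter. In the paper this is a \emph{corollary} of the affine Sobolev inequality (obtained by setting $f=\chi_D$ in Theorem~\ref{t:GeneralAffineSobolev}), not an input to it, so you cannot borrow it without circularity. Nor do your two proposed fixes go through as stated: approximation by convex bodies is unavailable because $\{f>t\}$ need not be convex, and the symmetrization proof of Theorem~\ref{t:pettyprojectioninequality} (Lemma~\ref{l:multidimSteiner}) relies essentially on writing $K$ as the region between two graphs $\u z\leq t\leq \o z$ over $P_{e_n^\perp}K$, which is a convexity statement; adapting it to the reduced boundary of an arbitrary finite-perimeter set is a substantial piece of geometric measure theory (this is what Zhang had to do separately in the $\hid=1$ case, and no higher-order analogue is established in the paper). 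Until that extension is supplied independently, your chain of inequalities rests on an unproven step. If you want to keep a self-contained argument, the efficient repair is exactly the paper's: replace the coarea/level-set machinery by the single application of Lemma~\ref{l:asob} to $K=\langle f\rangle$, noting that $\frac1n\int_{\Rn}h_{\langle f\rangle}(\eta_f(z))\,d|Df|(z)=V(\langle f\rangle[n-1],\langle f\rangle)=\vol_n(\langle f\rangle)$.
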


\noindent An extension of the affine Sobolev inequality was also done to the setting of Firey-Brunn-Minkowski theory \cite{LYZ00} and the fractional calculus \cite{HL25}.

The paper is organized as follows. In Section~\ref{sec:pre}, we discuss more facts about convex bodies and their volume (Section~\ref{sec:properties}) and the classical theory of radial mean bodies (Section~\ref{sec:radial}). In Section~\ref{sec:diff_pro}, we recall the basic properties of the $\hid$-covariogram and prove Theorem~\ref{t:variationalformula} (Section~\ref{sec:co_proj}). In Section~\ref{sec:prop_proj}, we analyze the $\hid$th-order projection body, and then we analyze the $\hid$th-order centroid body in Section~\ref{sec:cent_prop}. Then, in Section~\ref{sec:new_rad}, we present the generalization of radial mean bodies to our setting and prove Theorem~\ref{t:set_con} and Corollary~\ref{cor:zhanginequality}. In Section~\ref{sec:petty}, we use a symmetrization technique to establish Petty's projection inequality, Theorem~\ref{t:pettyprojectioninequality}, and Petty's isoperimetric inequality, Theorem~\ref{t:petty_classic}, in our setting. The equality case is characterized through a class-reduction technique (see Lemma~\ref{l:classreduction} and Lemma~\ref{l:multidimSteiner}). Finally, in Section~\ref{sec:cent}, we prove the Busemann-Petty centroid inequality, Theorem~\ref{t:BPCH}, and the random-simplex inequality, Theorem~\ref{t:randomsimplex1}, in our setting. The proof of the Busemann-Petty centroid inequality relies on a duality (see Lemma~\ref{lem_duality}) between the centroid and the projection operators. Finally, in Section~\ref{sec:Sobolev Inequality}, we exploit Theorem~\ref{t:BPCH} to prove a $\hid$th-order version of the affine Sobolev inequality of Zhang \cite{GZ99} and Wang \cite{TW12} in Theorem~\ref{t:GeneralAffineSobolev}. As a consequence, we are able to extend Theorem~\ref{t:pettyprojectioninequality} to sets of finite perimeter. 

\section{Preliminaries}
\label{sec:pre}
\subsection{Background from Convex Geometry}
\label{sec:properties}
An important fact about volume, the $n$-dimensional Lebesgue measure, is the famed Brunn-Minkowski inequality: as a functional on $\conbod$ equipped with Minkowski summation $+,$ volume is $1/n$-concave, i.e. for $\lambda \in (0,1)$ and convex bodies $K,L\subset\Rn,$ one has
\begin{equation}
\vol_n\left((1-\lambda) K + \lambda L\right)^{1/n} \geq (1-\lambda)\vol_n\left(K\right)^{1/n} + \lambda\vol_n\left(L\right)^{1/n},
\label{eq:BM}
\end{equation}
with equality if and only if $K$ and $L$ are homothetic, i.e., $K=aL+b$ for some $a\in\R$ and $b\in\R^n.$  Minkowski summation is compatible with support functions: one has, for $\alpha,\beta>0$ and $K,L\in\conbod,$ that $h_{\alpha K+\beta L}(x)=\alpha h_K(x)+\beta h_L(x).$

We say that a set $L\subset\R^n$ is a star body if it is a compact set with $o\in \text{int}(L)$, if $[o,x]\subset L$ for all $x\in L$, and if its radial function is continuous when restricted to $\s$. Recall that the radial function of a compact set $L$ is given by $\rho_L(y)=\sup\{\lambda>0:\lambda y\in L\}.$ The radial function satisfies $\rho_{tL}(y)=t\rho_L(y)$ for $t>0$. Similar to how support functions uniquely determine their associated convex bodies, radial functions uniquely determine their star bodies. Every $K\in\conbod$ containing the origin in its interior is a star body. A use of the radial function is that, if $f$ is integrable over a star body $L$, then we have the polar coordinate formula:
\begin{equation}
\int_L f(x)dx = \int_{\s} \int_0^{\rho_L(\theta)}f(r\theta)r^{n-1}drd\theta,
\label{eq:polar}
\end{equation}
where $d\theta$ denotes the spherical Lebesgue measure on $\s$. We isolate the case when $f(x)=1$:
\begin{equation}
\vol_n(L) = \frac{1}{n}\int_{\s} \rho_L(\theta)^nd\theta.
\label{eq:polar_2}
\end{equation}

Similarly, for those $K\in\conbod$ containing the origin in their interior, the \textit{Minkowski functional}, or gauge, of $K$ is defined to be $\|y\|_K=\inf\{r>0:y\in rK\}$, and we have $\|y\|_K=\rho_K(y)^{-1}$ for $y\neq o$.
Notice that $\|x\|_{K^\circ} =h_K(x)$. A classification of norms on $\R^n$ is that a $1$-homogeneous, convex function is a norm if and only if it is the Minkowski functional of an origin-symmetric convex body. The standard Euclidean norm is precisely the Minkowski functional of $\B,$ and in this instance we write $|x|:=\|x\|_{\B}.$ Other classical notions we use include $\mathcal{O}(n),$ the orthogonal group on $\s,$ and $GL_n(\R),$ the group of non-singular $n\times n$ matrices over $\R.$

The following facts about convex bodies can be found in the textbook by Schneider \cite{Sh1}. Steiner's formula states that the Minkowski sum of two compact, convex subsets of $\Rn$ can be expanded as a polynomial of degree $n$: for every $t\in(0,1)$ and compact, convex $K,L\subset\Rn,$ one has
$$\vol_n(K+tL)=\sum_{j=0}^n \binom{n}{j}t^j V_n(K[n-j],L[j]),$$
where $V_n(K[n-j],L[j])$ is the \textit{mixed volume} of $(n-j)$ copies of $K$ and $j$ copies of $L$. When $j=1$, one often writes $V_n(K[n-1],L)$.  Also, when $j=0$, $V_n(K[n])=\vol_n(K).$ The Brunn-Minkowski inequality \eqref{eq:BM} then implies \textit{Minkowski's first inequality}
\begin{equation}V_n(K[n-1],L)^n \geq \vol_n(K)^{n-1}\vol_n(L),
\label{eq:min_first}
\end{equation}
with equality if and only if $K$ and $L$ are homothetic.

A convex set $K$ is often studied through its surface area measure $\sigma_K$: for every Borel $A \subset \s,$ one has $$\sigma_K(A)=\mathcal{H}^{n-1}(n^{-1}_K(A)),$$ where $\mathcal{H}^{n-1}$ is the $(n-1)$-dimensional Hausdorff measure and $n_K:\partial K \rightarrow \s$ is the Gauss map, which associates an element $y$ of $\partial K,$ the boundary of $K,$ with its outer unit normal. For almost all $x\in\partial K$, $n_K(x)$ is well-defined (i.e. $x$ has a single outer unit normal). One can verify via approximation by polytopes that for $K,L\in\conbod,$ 
\begin{equation}
\begin{split}
	    V_n(K[n-1],L):=\frac{1}{n}\lim_{\varepsilon\to0}\frac{\vol_n(K+\varepsilon L)-\vol_n(K)}{\varepsilon}
     =\frac{1}{n}\int_{\s}h_L(\theta)d\sigma_K(\theta).
\end{split}
	    \label{eq:mixed_0}
\end{equation}

\noindent A relevant example of mixed volumes in our case is the \textit{mean width} of $L\in\conbod$, given by
\begin{align}
\label{eq:mean_wdith}
w_n(L)=\frac{1}{n\kappa_n}\int_{\s}h_L(\theta)d\theta=\frac{1}{\kappa_n}V_n(\B[n-1],L).
\end{align}

\noindent Given a convex body $K$, its \textit{Blaschke body} $\widetilde{K}$ is the unique origin-symmetric convex body whose surface area measure is given by \cite[Definition 3.3.8]{gardner_book}
\begin{equation}
\label{eq:BL_body}
d\sigma_{\widetilde{K}}(\theta)=\frac{d\sigma_K(\theta)+d\sigma_K(-\theta)}{2}.
\end{equation}
An application of Minkowski's first inequality is that \cite[Theorem 3.3.9]{gardner_book}
\begin{equation}
    \vol_n(K)\leq \vol_n(\widetilde K),
    \label{eq:BL_volume}
\end{equation}
with equality if and only if $K$ is symmetric, in which case $K$ is a translate of $\widetilde{K}$.

In \cite{Lut75}, Lutwak introduced the $i$th \textit{dual mixed volume} for $K$ and $L$ star bodies in $\Rn$:
\begin{equation}
    \widetilde{V}_i(K[n-i],L[i])=\frac{1}{n}\int_{\s}\rho_{K}(\theta)^{n-i}\rho_L(\theta)^i d\theta.
    \label{eq:dual_mixed}
\end{equation}
Lutwak originally only considered the case when $i\in\mathbb{N},$ $i \leq n,$ but it is not too difficult to expand his theory to the case when $i\in\R,$ see for example the appendix of \cite{gardner_book}. When $i=-1,$ we write $\widetilde{V}_{-1}(K[n+1],L).$ Dual mixed volumes satisfy the \textit{dual Minkowski's first inequality}: for $K,L \in \mathcal{S}^n$,
\begin{equation}
    \vol_n(K)^{n+1}\vol_n(L)^{-1}\leq \widetilde{V}_{-1}(K[n+1],L)^{n},
    \label{dual_Min_first}
\end{equation}
with equality if and only if $K$ and $L$ are dilates (see e.g. \cite[Section B.4, pg. 421, eq. B.24]{gardner_book}). Both the inequality \eqref{dual_Min_first} and its equality characterization follow from H\"older's inequality.

\noindent We refer the reader to \cite{gardner_book,Gr,AK05,KY08,Sh1} for more classical definitions and properties of convex bodies and corresponding functionals. 

 To emphasize how the dual mixed volumes will be used in this work, it is known \cite{gardner_book} that the centroid operator $\g$ and the polar projection operator $\pp$ satisfy the following duality
 \begin{equation}
 \label{eq:og_dual}
 \widetilde V_{-1}(L[n+1], \pp K) = \vol_{n}(L) \frac{n+1}{2} V_n(K[n-1], \g  L),\end{equation}
 for every convex body $K$ and star body $L$ in $\Rn$.

Let us conclude this subsection with a historical remark on one possible proof of Petty's projection inequality, the second inequality in \eqref{e:Zhang_petty_ineq}. Given $u\in\s$, the \textit{Steiner symmetrization} of $K$ about $u^\perp$ constructs the \textit{Steiner symmetral} of $K$, denoted $S_u K$. The set $S_u K$ is a convex body that is symmetric about $u^\perp$ and satisfies $\vol_n(S_u K)=\vol_n(K).$ A well-known theorem \cite[Theorem 6.6.5]{Web94} states that there exists a sequence of directions $\{u_j\}_{j=1}^\infty\subset \s$ such that, if we define $S_1 K = S_{u_1} K, S_{j}K=S_{u_j} S_{j-1}K,$ then $S_j K \to \kappa_n^{-1/n}\vol_n(K)^{1/n} \B$ in the Hausdorff metric (see \cite{Sh1}), i.e., $K$ is transformed into the centered Euclidean ball of the same volume. A fact that seems to have been folklore for a long time, but, as far as the authors are aware, was only first shown rigorously in \cite{LYZ00}, is that for $u\in \s$ and $K\in\conbod,$ one has $$S_u \Pi^\circ K \subseteq \Pi^\circ S_u K, $$
and therefore Petty's inequality, the right-hand side of \eqref{e:Zhang_petty_ineq}, is an immediate consequence of Steiner symmetrization. Inspired by this classical proof, we will recall another type of symmetrization to settle Theorem~\ref{t:pettyprojectioninequality}.

\subsection{Radial Mean Bodies}
\label{sec:radial}
Next, we recall that a non-negative function $\psi$ is said to be $s$-concave, $s>0,$ if for every $x,y\in\text{supp}(\psi)$ and $\lambda\in[0,1],$ one has $$\psi((1-\lambda)x+\lambda y)^s \geq (1-\lambda)\psi(x)^s +\lambda \psi(y)^s.$$
Furthermore, in the limit as $s\to 0^+,$ one has log-concavity:
$$\psi((1-\lambda)x+\lambda y) \geq \psi(x)^{1-\lambda}\psi(y)^\lambda.$$
Jensen's inequality shows that every $s$-concave measure, $s>0,$ is also log-concave. There is an interesting connection between log-concave functions and convex bodies in $\R^n.$ 
\begin{proposition}[Theorem 5 in \cite{Ball88} and Corollary 4.2 in \cite{GZ98}]
\label{p:radial_ball}
    Let $f$ be a log-concave function on $\Rn$. Then, for every $p> 0,$ the function on $\s$ given by 
    $$\theta\mapsto\left(p\int_0^\infty f(r\theta)r^{p-1}dr\right)^{1/p}$$
    defines the radial function of a convex body containing the origin in its interior.
\end{proposition}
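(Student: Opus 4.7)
The plan is to exhibit a convex body $K\subset\Rn$ whose radial function is $g(\theta):=\left(p\int_0^\infty f(r\theta)r^{p-1}dr\right)^{1/p}$. Equivalently, setting $I_p(x):=\int_0^\infty f(rx)r^{p-1}dr$ and $\phi(x):=(pI_p(x))^{-1/p}$ for $x\ne o$, I will show that $\phi$ is a positively $1$-homogeneous, positive, finite, and convex function on $\Rn\setminus\{o\}$. The body $K:=\{x\in\Rn:\phi(x)\le1\}$ is then a convex body with Minkowski functional $\phi$ and radial function $1/\phi$, which agrees with $g$ on $\s$; finiteness of $\phi$ forces $o$ to lie in the interior of $K$.

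First I would verify the homogeneity by the substitution $s=tr$ in the integral, obtaining $I_p(tx)=t^{-p}I_p(x)$ for $t>0$, hence $\phi(tx)=t\phi(x)$. Positivity and finiteness of $I_p(x)$ for each $x\ne o$ follow from standard integrability and decay properties of nontrivial log-concave functions on rays emanating from the origin.

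The heart of the proof is the convexity of $\phi$, which by $1$-homogeneity is equivalent to the $(-1/p)$-concavity of $I_p$, i.e.,
\[
I_p((1-\lambda)x_0+\lambda x_1)^{-1/p}\le(1-\lambda)I_p(x_0)^{-1/p}+\lambda I_p(x_1)^{-1/p}.
\]
I would deduce this from the one-dimensional Borell--Brascamp--Lieb inequality with exponent $\gamma=-1/(p+1)$, whose output-mean is $\gamma/(1+\gamma)=-1/p$. Take $u_i(r)=f(rx_i)r^{p-1}$ on $(0,\infty)$ for $i=0,1,\lambda$, so that $\int u_i=I_p(x_i)$. The BBL hypothesis requires $u_\lambda((1-\lambda)r_0+\lambda r_1)\ge M_\gamma(u_0(r_0),u_1(r_1);1-\lambda,\lambda)$ pointwise; the key identity for verifying this is that for the harmonic mean $r_h=((1-\lambda)/r_0+\lambda/r_1)^{-1}$ of $r_0,r_1$, the point $r_h x_\lambda$ equals the genuine convex combination $\alpha(r_0x_0)+\beta(r_1x_1)$ with $\alpha=(1-\lambda)r_h/r_0,\ \beta=\lambda r_h/r_1,\ \alpha+\beta=1$, so log-concavity of $f$ supplies $f(r_hx_\lambda)\ge f(r_0x_0)^\alpha f(r_1x_1)^\beta$. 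Rebalancing this with the $r^{p-1}$ weights and invoking the AM--HM relationship produces the $\gamma$-mean bound needed for BBL (this is the content of Ball's argument in \cite{Ball88}, refined in \cite{GZ98}).

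The main obstacle is precisely this alignment step: BBL is phrased in terms of the arithmetic mean of $r_0,r_1$, while the bilinearity of the integrand in $(r,x)$ makes log-concavity of $f$ naturally align with the harmonic mean $r_h$. The correct choice $\gamma=-1/(p+1)$ and the compensating factors $r^{p-1}$ are exactly what is required to interpolate between these means. Once the BBL hypothesis is verified, the inequality immediately yields $(-1/p)$-concavity of $I_p$, whence $\phi$ is convex and 1-homogeneous, completing the identification of $g$ as the radial function of the convex body $K\ni o$.
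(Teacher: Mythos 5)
The paper does not prove this proposition at all --- it is quoted verbatim from Ball \cite{Ball88} and Gardner--Zhang \cite{GZ98} --- so there is no internal argument to compare against; what you have written is essentially the standard proof from those references, and it is correct in outline. The homogeneity computation, the reduction of convexity of $\phi=(pI_p)^{-1/p}$ to $(-1/p)$-concavity of $I_p$, the choice $\gamma=-1/(p+1)$ with $\gamma/(1+\gamma)=-1/p$, and the identity expressing $r_h x_\lambda$ as a genuine convex combination of $r_0x_0$ and $r_1x_1$ are all right. The one step you should make explicit is how the harmonic-mean pointwise bound becomes a hypothesis for BBL, which is stated at arithmetic-mean points: you cannot pass from $u_\lambda(r_h)\ge\cdots$ to $u_\lambda((1-\lambda)r_0+\lambda r_1)\ge\cdots$ since $u_\lambda$ is not monotone. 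The clean fix is the substitution $r=1/s$, setting $v_i(s)=u_i(1/s)s^{-2}=f(x_i/s)s^{-p-1}$, so that $\int v_i=\int u_i$ and harmonic means in $r$ become arithmetic means in $s$; the required inequality $v_\lambda(s_\lambda)\ge M_{-1/(p+1)}(v_0(s_0),v_1(s_1))$ then reduces, after inserting $f(r_hx_\lambda)\ge f(r_0x_0)^{\alpha}f(r_1x_1)^{\beta}$, to the weighted AM--GM inequality $A^{\alpha}B^{\beta}\le\alpha A+\beta B$ with $A=f(r_0x_0)^{-1/(p+1)}$, $B=f(r_1x_1)^{-1/(p+1)}$. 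Finally, note that the proposition as stated implicitly assumes $0<\int_0^\infty f(r\theta)r^{p-1}\,dr<\infty$ for every $\theta$ (false for, say, $f$ supported in a half-space missing the origin, or $f\equiv1$); this holds in the paper's application to the covariogram, but your appeal to ``standard integrability and decay properties'' should be conditioned on such a hypothesis.
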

\noindent As an interesting historical development, we remark that in \cite{CEFPP15}, it was shown that if the concavity of the function $f$ is related to the power $p,$ then the integral in Proposition~\ref{p:radial_ball} is the radial function of a convex body when $f$ is $-1/(p+1)$-concave.

Gardner and Zhang \cite{GZ98} defined the \textit{pth radial mean body} of a convex body $K$ as the star bodies whose radial function is given by, for $\theta\in\s,$ 
\begin{equation}
    \rho_{R_p K}(\theta)=\left(\frac{1}{\vol_n(K)}\int_K \rho_{K-x}(\theta)^p dx\right)^\frac{1}{p}.
    \label{pth}
\end{equation}
A priori, the above is valid for $p>-1, p\neq 0$.
But also, by appealing to continuity, Gardner and Zhang were able to define $R_0 K$ and establish that
$R_\infty K=DK$. Additionally, Gardner and Zhang established that each $R_p K$ is an origin-symmetric convex body for $p \geq 0$ by applying Proposition~\ref{p:radial_ball} to the covariogram, which is even (the convexity for $p\in (-1,0)$ is still open). By using Jensen's inequality, they then obtained that for $-1 < p < q\leq \infty,$ one has
\begin{equation}
R_{p} K \subseteq R_{q} K \subseteq R_{\infty} K=D K.
\label{eq:radial_set_inc}
\end{equation}
One sees that $R_p K$ tends to $\{o\}$ as $p\to -1.$ Therefore,  Gardner and Zhang defined another family of star bodies depending on $K\in\conbod$, the \textit{pth spectral mean bodies} of $K.$ It turned out the $0$th spectral mean body is $e R_0 K$ and the $p$th spectral mean body for $p\in (-1,0)\cup (0,\infty)$ is $(p+1)^\frac{1}{p}R_p K.$ With this renormalization, one obtains $(p+1)^\frac{1}{p}R_p K \to \vol_n(K) \Pi^\circ K$ as $p\to (-1)^+$, showing that the \textit{shape} of $R_p K$ indeed tends to that of a multiple of the polar projection body of $K$ as $p\to (-1)^+$. Gardner and Zhang then obtained a reverse of \eqref{eq:radial_set_inc}. They accomplished this by using a reverse H\"older-type inequality, Berwald's inequality \cite{Berlem,Bor73} obtaining \cite[Theorem 5.5]{GZ98}, for $-1 < p < q < \infty,$ that
\begin{equation}
    \label{e:set_inclusion}
    D K \subseteq \binom{n+q}{q}^\frac{1}{q} R_{q} K \subseteq \binom{n+p}{p}^\frac{1}{p} R_{p} K \subseteq n \vol_n(K) \Pi^{\circ} K,
\end{equation}
There is equality in each inclusion in \eqref{e:set_inclusion} if and only if $K$ is a $n$-dimensional simplex.

\noindent Radial mean bodies themselves were recently generalized to the setting of Borel measures with density and concavity \cite{LP25} and the fractional calculus \cite{HL25}.

\subsection{Wulff Shapes} 
\label{sec:wulff}
Let $C(\s)$ denote the set of continuous functions from $\s$ to $\R$. For $f\in C(\s)$, the Wulff shape of $f$ is 
$$[f] = \bigcap_{u \in \s} \{x \in \mathbb R^n: \langle x, u\rangle \le f(u)\}.$$
 In most works, $f$ is assumed to be strictly positive, to ensure that $[f]$ has non-empty interior (in particular, $o\in \text{int}( [f])$); see, for example, the textbook by Schneider \cite[Section 7.5, pg. 410]{Sh1}. A notable case is when $f=h_K+th$, where $K$ is a convex body containing the origin in its interior, $h\in C(\s)$, and $t$ is small enough so that $f$ is strictly positive. Actually, one can drop the assumption that $K$ contains the origin and $[h_K+th]$ will still have non-empty interior. Indeed, let $z\in \R^n$ be so that $K-z$ contains the origin in its interior - for example, $z$ being the barycenter of $K$. Then, $h_{K-z}(u)=h_K(z)-\langle z,u\rangle$. Therefore, by picking $t>0$ small enough so that $h_{K-z}+th>0$ on $\s$, we have
\begin{equation}
\label{eq:Wulf_shift}
[h_K+th]=[h_{K-z}+th+\langle z,\cdot\rangle]=[h_{K-z}+th]+z,
\end{equation}
where, in the last line, we used the easily verifiable fact that, if $f$ is so that $[f]$ has non-empty interior, then $[f+\langle z,\cdot\rangle]=[f]+z$.

We next recall the following classical fact about the variation of the volume of Wulff shapes. Usually, it is stated with $o\in \text{int}(K)$. However, \eqref{eq:Wulf_shift} combined with the translation invariance of the Lebesgue measure allows us to drop this assumption.
\begin{lemma}[Aleksandrov's Variational Formula, \cite{AL}]
	\label{l:second}
	For $f\in C(\s)$ and $K\in\conbod$, one has
	$$\lim_{t\rightarrow 0}\frac{\vol_n([h_K+tf])-\vol_n(K)}{t}=\int_{\s}f(u)d\sigma_K(u).$$
	\end{lemma}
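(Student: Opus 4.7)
The plan is a three-step reduction: first to the case where $f$ is a support function via Steiner's formula, then to differences of support functions via Minkowski differences, and finally to arbitrary continuous $f$ by a sandwich argument based on density and the monotonicity of the Wulff shape. Throughout, I would first reduce to the case $o \in \operatorname{int}(K)$ by translation (both sides of the identity are translation invariant), so that $h_K + tf > 0$ on $\s$ for all sufficiently small $|t|$ and $[h_K + tf]$ is a genuine convex body with nonempty interior.

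\emph{Step 1 (Support functions).} When $f = h_L$ for some $L \in \conbod$, the function $h_K + tf = h_{K + tL}$ is itself a support function for $t \geq 0$, so the Wulff shape satisfies $[h_K + tf] = K + tL$. Steiner's polynomial expansion together with the integral representation \eqref{eq:mixed_0} yields
\[
\vol_n([h_K + tf]) - \vol_n(K) = n t\, V(K[n-1], L) + O(t^2) = t\int_{\s} h_L\, d\sigma_K + O(t^2),
\]
establishing the formula in this case. The case $t < 0$ follows by symmetry (swapping the roles of $K$ and $K+tL$).

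\emph{Step 2 (Differences of support functions).} Suppose $f = h_{L_1} - h_{L_2}$ with $L_1, L_2 \in \conbod$. A direct computation from the definition of the Wulff shape gives, for small $t > 0$,
\[
[h_K + t(h_{L_1} - h_{L_2})] = (K + tL_1) \sim t L_2,
\]
where $A \sim B = \{x : x + B \subseteq A\}$ is the Minkowski difference. Applying Steiner's first-order expansion to the Minkowski sum, then the analogous first-order expansion for the inner parallel body, together with continuity of mixed volumes under Hausdorff convergence, yields
\[
\vol_n([h_K + tf]) = \vol_n(K) + n t \bigl[V(K[n-1], L_1) - V(K[n-1], L_2)\bigr] + O(t^2),
\]
which equals $\vol_n(K) + t\int_{\s} f\, d\sigma_K + O(t^2)$ by linearity of \eqref{eq:mixed_0}.

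\emph{Step 3 (General continuous $f$).} The set $\{h_{L_1} - h_{L_2} : L_1, L_2 \in \conbod\}$ is uniformly dense in $C(\s)$ (any $C^2$ function on $\s$ extends to a difference of two convex 1-homogeneous functions on $\R^n$, and $C^2(\s)$ is dense in $C(\s)$). Given $\epsilon > 0$, choose $L_1, L_2$ with $\|f - (h_{L_1} - h_{L_2})\|_{\infty} \leq \epsilon$, so that on $\s$,
\[
h_{L_1} - h_{L_2} - \epsilon \;\leq\; f \;\leq\; h_{L_1} - h_{L_2} + \epsilon.
\]
Since $h_K + t\epsilon = h_{K + t\epsilon \B}$ on $\s$ for $t > 0$, and since $g_1 \leq g_2$ implies $[g_1] \subseteq [g_2]$, one sandwiches $\vol_n([h_K + tf])$ between the volumes of $[h_K + t(h_{L_1} - h_{L_2}) \pm t\epsilon]$. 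Both flanking Wulff shapes fall under Step 2 (after absorbing the $\pm t\epsilon$ into a Minkowski sum with $\epsilon \B$), so dividing by $t$ and sending $t \to 0^+$ bounds the liminf and limsup of the difference quotient within $O(\epsilon\, \sigma_K(\s))$ of $\int_{\s} f\, d\sigma_K$. Letting $\epsilon \to 0$ yields the right-sided limit; applying the same argument to $-f$ gives the left-sided limit.

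The principal technical obstacle lies in Step 2: rigorously justifying both the identity $[h_M - h_N] = M \sim N$ (which reduces to the dual definition of the Wulff shape via intersections of half-spaces) and the first-order expansion of $\vol_n(M \sim tL)$, which is more delicate than the classical Steiner expansion because inner parallel bodies can be empty or have singular behavior when $tL$ is too large relative to the inradius of $M$. A minor secondary issue is that the approximating $L_1, L_2$ in Step 3 should be chosen with inradii bounded below so that the Minkowski differences in the sandwich are well defined; this is arranged by replacing $L_1, L_2$ with $L_1 + R\B, L_2 + R\B$ for large $R$, which leaves $h_{L_1} - h_{L_2}$ unchanged.
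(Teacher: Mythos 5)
The paper does not actually prove this lemma; it quotes it as a classical fact from \cite{AL}, so your argument has to stand on its own. Your skeleton (support functions $\to$ differences of support functions $\to$ density) is a legitimate alternative route, and Steps 1 and 3 are fine as far as they go, but the obstacle you flag in Step 2 is not a technicality to be deferred --- it is the entire content of the lemma, and as written the argument is circular there. The identity $[h_M - h_N] = M \sim N$ is correct, but the asserted ``first-order expansion for the inner parallel body,''
$$\vol_n(M \sim tL) = \vol_n(M) - n t\, V(M[n-1],L) + o(t),$$
is not analogous to Steiner's formula: $\vol_n(M\sim tL)$ is not a polynomial in $t$, and this expansion is exactly the statement of Aleksandrov's formula for $f = -h_L$. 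The elementary tools you invoke give only one direction. From $(M\sim tL)+tL\subseteq M$ plus Steiner and continuity of mixed volumes, or equivalently from $nV(M[n-1],M\sim tL)=\int_{\s}h_{M\sim tL}\,d\sigma_M\le\int_{\s}(h_M-th_L)\,d\sigma_M$ combined with Minkowski's first inequality \eqref{eq:min_first}, one gets the upper bound $\vol_n(M\sim tL)\le\vol_n(M)-ntV(M[n-1],L)+O(t^2)$. The matching lower bound amounts to showing that $\vol_n\bigl(M\setminus((M\sim tL)+tL)\bigr)=o(t)$, i.e.\ that the part of $M$ not covered by any translate of $tL$ lying in $M$ is volumetrically negligible to first order; this is equivalent to Aleksandrov's ``touching'' lemma (that $h_{[g]}=g$ holds $\sigma_{[g]}$-almost everywhere), which is the genuinely nontrivial core of every known proof. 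The same gap already appears in your Step 1 for $t<0$: there $[h_K+th_L]=K\sim|t|L$, so ``swapping the roles of $K$ and $K+tL$'' does not apply.

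To close the gap you essentially have to import the standard argument, at which point the reduction buys little. The classical proof works directly with $K_t=[h_K+tf]$: one checks $K_t\to K$ in the Hausdorff metric, uses the touching lemma to write $n\vol_n(K_t)=\int_{\s}(h_K+tf)\,d\sigma_{K_t}$, and then applies Minkowski's first inequality \eqref{eq:min_first} twice --- once to $V(K_t[n-1],K)=\frac1n\int h_K\,d\sigma_{K_t}$ and once to $V(K[n-1],K_t)=\frac1n\int h_{K_t}\,d\sigma_K\le\frac1n\int(h_K+tf)\,d\sigma_K$ --- together with the weak convergence $\sigma_{K_t}\to\sigma_K$, to squeeze both one-sided derivatives of $t\mapsto\vol_n(K_t)^{1/n}$ and obtain $\frac{d}{dt}\vol_n(K_t)^{1/n}\big|_{t=0}=\frac{1}{n}\vol_n(K)^{\frac{1-n}{n}}\int_{\s}f\,d\sigma_K$. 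Either supply the touching lemma (at least for $g=h_M-th_L$, where a polytope approximation does not suffice because volume derivatives do not pass to limits) or adopt this direct route; as it stands, Step 2 assumes what is to be proved.
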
	

\section{The mth-Order Projection and Centroid Bodies}
\label{sec:diff_pro}
\subsection{The Covariogram}
\label{sec:co_proj}
We first recall the definition of the generalized covariogram, which was first implicitly defined by Schneider \cite{Sch70}. 

\begin{definition} Let $K$ be a convex body in $\R^n$ and $\hid \in \N$. We define the $\hid$-covariogram of the body $K$ to be the function $\Cov \colon (\R^n)^{\hid} \to \R^+$ given by 
\[
\Cov (\bar{x}) = \Cov(x_1,\dots,x_{\hid}) = \int_{K} \left(\prod_{i=1}^{\hid} \chi_{K}(y-x_i)\right)dy.
\]
\label{eq_higher_co}
\end{definition}
In the following proposition, we recall rudimentary facts about $\Cov$ and $D^\hid (K).$ They are easily verifiable by the reader, and can also be found in \cite{Sch70}.
\begin{proposition}
\label{p:facts}
    Fix $\hid \in \N$ and consider $K\in \conbod$. Then: 
    \begin{enumerate}
        \item[1.)]  $\Cov$ is supported on $D^m(K)$ and, for every $z\in\R^n$, $g_{K+z,m}=\Cov$.
        \item[2.)] $D^m(K) \in \mathcal{K}^{nm}$ and, for every $z\in\R^n$, $D^m(K+z)=D^m(K)$.
        \item[3.)] For $m \geq 2, D^m(K)$ is origin-symmetric if and only if $K$ is symmetric.
        \item[4.)] $\Cov$ is $1 / n$-concave on its support.
    \end{enumerate}
\end{proposition}


We now compute the radial derivative of the $\hid$-covariogram, that is prove Theorem~\ref{t:variationalformula}. For the convenience of the reader, we restate it here.
\begin{reptheorem} {t:variationalformula}
Let $K$ be a convex body in $\R^n$ and $\hid \in \N$. Then, for every fixed direction $\bar{\theta} = (\theta_1,\dots,\theta_{\hid}) \in \S$, one has 
\[
\frac{d}{dr} \Cov (r\bar{\theta}) \bigg|_{r=0^+}
= -\int_{\partial K} \max_{1 \leq i \leq \hid} \langle \theta_i, n_K(y) \rangle_{-}dy.\]
\end{reptheorem}

The main tool we use in the proof of Theorem \ref{t:variationalformula} is the formula for the first variation of the volume of a Wulff shape, see Section~\ref{sec:wulff}. For $K\in\conbod,$ $r\geq 0$ and $\bar\theta\in\mathbb{S}^{nm-1}$, we define the convex set in $\R^n$
$$K_r(\bar\theta)=\left[h_K-r\max_{0\le i \le \hid}\langle \theta_i,\cdot \rangle_- \right].$$
For $r\geq 0$ small enough, $K_r(\bar\theta)$ is a convex body. We can now prove our first theorem. 
\begin{proof}[Proof of Theorem~\ref{t:variationalformula}]
It suffices to show, for every $\bar\theta\in\S$ and $r\in [0,\rho_{D^m(K)}(\bar\theta)]$,
\begin{equation}
\Cov(r\bar\theta)=\vol_n(K_r(\bar\theta)).
\label{eq:covariogram_half_space}
\end{equation}
Indeed, Lemma~\ref{l:second} and \eqref{eq:covariogram_half_space} yield
$$\left.\frac{d}{dr}\vol_n(K_r(\bar\theta))\right|_{r = 0} = -\int_{\s} \max_{1 \le i \le \hid} \langle u, \theta_i\rangle_-\,d\sigma_K(u),$$
and then the claim follows by use of the Gauss map.

Recall that, by definition, $K \cap (K + r \theta_1) \cap \cdots \cap (K + r \theta_\hid) \neq \emptyset$ for $r\in [0,\rho_{D^m(K)}(\bar\theta)]$. Henceforth, we fix such an $r$. Next, we note that for any $\theta \in \mathbb R^n$, $h_{K + r\theta}(u) = h_K(u) + r\langle u, \theta\rangle$. Also, for any convex body $L$, we have $L = \bigcap_{u \in \s} \{x: \langle u, x\rangle \le h_L(u)\}$  \cite[Corollary 1.3.5, pg. 12]{Sh1}. Setting $\theta_0 = o$ for notational convenience, we also have
$$\min_{0 \le i \le \hid} \langle u, \theta_i\rangle = \min_{1 \le i \le \hid} (-\langle u, \theta_i\rangle_-)=- \max_{1 \le i \le \hid} \langle u, \theta_i\rangle_-.$$
Therefore,
\begin{equation}
\label{eq:intersection_formulas}
\begin{split}
K \cap& (K + r \theta_1) \cap \cdots \cap (K + r \theta_\hid) = \bigcap_{i = 0}^\hid \bigcap_{u \in \s} \{x: \langle u, x\rangle \le h_{K + r\theta_i }(u)\} \\
&= \bigcap_{u \in \s} \bigcap_{i = 0}^\hid \{x: \langle u, x\rangle \le h_{K + r\theta_i }(u)\} \\
&= \bigcap_{u \in \s} \left\{x:  \langle u, x\rangle \le \min_{0\le i \le \hid} (h_{K}(u) + r\langle \theta_i, u\rangle)\right\} \\
&= \bigcap_{u \in \s} \left\{x:  \langle u, x\rangle \le h_K(u) + r\min_{0\le i \le \hid} \langle u, \theta_i\rangle\right\}
\\
&= \bigcap_{u \in \s} \left\{x:  \langle u, x\rangle \le h_K(u) - r\max_{0\le i \le \hid} \langle u, \theta_i\rangle_-\right\}
=K_r(\bar\theta).
\end{split}
\end{equation}
Taking volume establishes \eqref{eq:covariogram_half_space}.
 \end{proof}

\noindent We additionally remark that as a function in the variable $r$, $\Cov(r\bar\theta)$ is monotonically decreasing for $r\in[0,\rho_{D^\hid (K)}(\bar\theta)].$ Therefore, from Lebesgue's theorem, $\Cov(r\bar\theta)$ is differentiable almost everywhere on $[0,\rho_{D^\hid (K)}(\bar\theta)]$.

\subsection{Properties of the mth-Order Projection Body}
\label{sec:prop_proj}
In this section, we study properties of the operator $\P$ introduced in Definition~\ref{def:higher_proj}. The formula for the support function of $\P K$ was written as an integral over $\partial K$; by using \eqref{eq:mixed_0} and the definition of the surface area measure, we can also write
\begin{equation}
h_{\P K}(\bar\theta)=nV_n(K[n-1],C_{-\bar\theta})
=\int_{\s}\max_{1\le i \le \hid} \langle u, \theta_i\rangle_{-}d\sigma_K(u).
\label{eq_supp_again}
\end{equation}
It is easily seen that, for any $\bar\theta \in \mathbb R^{n\hid}$, $C_{-\bar\theta} = -C_{\bar\theta}$. As a consequence, it holds
\begin{align*}h_{\P K}(-\bar\theta)&=nV_n(K[n-1],C_{\bar\theta})=\int_{\s}\max_{1\le i \le \hid} \langle u, \theta_i\rangle_{+}d\sigma_K(u).
\end{align*}

Now, we shall introduce a functional associated with $\P$ and show that it is affine-invariant. First, we show how $\P K$ behaves under linear transformations. For $T\in GL_n(\R),$ $\bar x\in \Rnhi,$ we define $\overline{T} \in GL_{n\hid}(\R)$ by $\overline{T}(\bar x)= (T(x_1),\dots,T(x_n)).$
\begin{proposition}
\label{prop:linear:transformations}
    Let $T\in GL_n(\R).$ For $\hid \in \mathbb{N}$ and $K\in\conbod$, one has
    \begin{equation}
        \P TK=|\det T|\overline{T^{-t}}\P K.
    \end{equation}
\end{proposition}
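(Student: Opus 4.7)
The plan is to verify the identity by computing the support functions of both sides and matching them. The natural representation to use is the mixed volume form from Definition~\ref{def:higher_proj}, namely $h_{\P K}(\bar\theta) = nV(K[n-1], C_{-\bar\theta})$, together with the standard behavior of support functions and mixed volumes under linear maps.

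First I would rewrite the right-hand side. Since the block-diagonal lift $\overline{T^{-t}} \in GL_{n\hid}(\R)$ has transpose $\overline{T^{-1}}$, the identity $h_{AL}(x) = h_L(A^{t}x)$ gives
\[
h_{|\det T|\,\overline{T^{-t}}\P K}(\bar\theta)
= |\det T|\, h_{\P K}\bigl(\overline{T^{-1}}\bar\theta\bigr)
= n\,|\det T|\,V\bigl(K[n-1],\, C_{-\overline{T^{-1}}\bar\theta}\bigr).
\]

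For the left-hand side, the key geometric observation is that the construction $\bar\theta \mapsto C_{\bar\theta}$ intertwines with linear maps: for any $S \in GL_n(\R)$,
\[
S\,C_{\bar\theta}
= S\,\conv_{1\le i \le \hid}[o,\theta_i]
= \conv_{1\le i \le \hid}[o, S\theta_i]
= C_{\overline{S}\bar\theta}.
\]
Applying this with $S = T$ to the body $C_{-\overline{T^{-1}}\bar\theta}$ yields $T\,C_{-\overline{T^{-1}}\bar\theta} = C_{-\bar\theta}$. Combining this with the standard transformation rule $V(TL_1, \ldots, TL_n) = |\det T|\, V(L_1, \ldots, L_n)$ for mixed volumes (applied with $n-1$ copies of $K$ and one copy of $C_{-\overline{T^{-1}}\bar\theta}$) gives
\[
h_{\P TK}(\bar\theta)
= nV\bigl(TK[n-1],\, C_{-\bar\theta}\bigr)
= n\,|\det T|\,V\bigl(K[n-1],\, C_{-\overline{T^{-1}}\bar\theta}\bigr),
\]
which agrees with the expression computed above.

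Since $\bar\theta \in \S$ was arbitrary and a convex body is uniquely determined by its support function, this establishes the identity. The only real subtleties are keeping the identifications between $T$ and its block-diagonal lift $\overline{T}$ consistent and verifying that $(\overline{T^{-t}})^t = \overline{T^{-1}}$; neither presents a substantive obstacle, so the proof is entirely mechanical once the mixed-volume representation and the intertwining property of $C_{\bar\theta}$ are in hand.
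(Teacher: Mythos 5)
Your proof is correct, but it follows a different route from the paper's. The paper proves this proposition by invoking Theorem~\ref{t:variationalformula}: it writes $h_{\P TK}(\bar\theta)$ as the (negative) radial derivative of the $\hid$-covariogram of $TK$ at the origin, factors $T$ out of the intersection $TK\cap\bigcap_i(TK+t\theta_i)=T\bigl(K\cap\bigcap_i(K+tT^{-1}\theta_i)\bigr)$, and uses $\vol_n(TA)=|\det T|\vol_n(A)$ to arrive at $h_{\P TK}(\bar\theta)=|\det T|\,h_{\P K}(\overline{T^{-1}}\bar\theta)$. You reach the same intermediate identity instead via the mixed-volume representation $h_{\P K}(\bar\theta)=nV(K[n-1],C_{-\bar\theta})$, the intertwining $TC_{-\overline{T^{-1}}\bar\theta}=C_{-\bar\theta}$, and the standard rule $V(TL_1,\dots,TL_n)=|\det T|\,V(L_1,\dots,L_n)$; this is the classical argument for $\Pi(TK)=|\det T|\,T^{-t}\Pi K$ adapted verbatim, and it is arguably more self-contained since it does not route through the variational formula for Wulff shapes. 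The paper's choice has the side benefit of showcasing the covariogram connection it has just established. One small point worth making explicit in your write-up: Definition~\ref{def:higher_proj} is stated only for $\bar\theta\in\S$, and you evaluate $h_{\P K}$ at $\overline{T^{-1}}\bar\theta$, which need not be a unit vector; this is harmless because both $h_{\P K}$ and $\bar x\mapsto nV(K[n-1],C_{-\bar x})$ are positively $1$-homogeneous, so the identity extends to all of $\Rnhi\setminus\{o\}$.
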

\begin{proof}
    Begin by writing, from Theorem~\ref{t:variationalformula}, that
    \begin{align*}h_{\P TK}(\bar\theta)&=-\lim_{t\to 0}\frac{\vol_n\left((TK)\cap\bigcap_{i=1}^\hid (TK+t\theta_i)\right)-\vol_n(TK)}{t}
    \\
    &=-\lim_{t\to 0}\frac{\vol_n\left(T\left(K\cap\bigcap_{i=1}^\hid (K+tT^{-1}\theta_i)\right)\right)-\vol_n(TK)}{t}
    \\
    &=-|\det T|\lim_{t\to 0}\frac{\vol_n\left(K\cap\bigcap_{i=1}^\hid (K+tT^{-1}\theta_i)\right)-\vol_n(K)}{t}
    \\
    &=|\det T|h_{\P K}(\overline{T^{-1}}\bar\theta)=h_{|\det T|\P K}(\overline{T^{-1}}\bar\theta)
    \\
    &=h_{\overline{T^{-t}}|\det T|\P K}(\bar\theta),
    \end{align*}
    and the claim follows.
\end{proof}
We now show the associated affine invariant quantity for $\PP K.$
\begin{proposition}[Petty Product for mth-Order Projection Bodies]
\label{p:affine_invar_Zhang}
    For $\hid\in\mathbb{N}$ the following functional is invariant under affine transformations:
    $$K\in\conbod \mapsto \vol_n(K)^{n\hid-\hid}\vol_{n\hid}(\PP K).$$
\end{proposition}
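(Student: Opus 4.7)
The plan is to reduce affine invariance to two pieces: translation invariance (already in hand) and $GL_n(\R)$-invariance, the latter following by chasing Proposition~\ref{prop:linear:transformations} through polarity and volume scaling. Since $\P(K+x)=\P K$ for all $x\in\R^n$, polarity and volume are unaffected by translations of $K$, so the whole functional is translation invariant. It therefore suffices to prove that for every $T \in GL_n(\R)$ and every $K\in\conbod$,
\[
\vol_n(TK)^{n\hid-\hid}\vol_{n\hid}(\PP TK) = \vol_n(K)^{n\hid-\hid}\vol_{n\hid}(\PP K).
\]

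The first step is to transfer Proposition~\ref{prop:linear:transformations} to the polar side. Using the general identity $(AL)^{\circ} = A^{-t}L^{\circ}$ for $A \in GL_{n\hid}(\R)$ and the homogeneity $(\lambda L)^{\circ}=\lambda^{-1}L^{\circ}$, I would compute
\[
\PP TK = (|\det T|\,\overline{T^{-t}}\,\P K)^{\circ} = |\det T|^{-1}\,\overline{T}\,\PP K,
\]
where I use that $\overline{A}^{-t} = \overline{A^{-t}}$ for the block-diagonal operator $\overline{A}=\mathrm{diag}(A,\dots,A)$, so that $(\overline{T^{-t}})^{-t}=\overline{T}$.

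The second step is a bookkeeping calculation with volumes. Since $\overline{T}$ has determinant $(\det T)^{\hid}$ as a map on $\Rnhi$, and a scalar $\lambda>0$ scales $n\hid$-volume by $\lambda^{n\hid}$, I would compute
\[
\vol_{n\hid}(\PP TK) = |\det T|^{-n\hid}\cdot|\det T|^{\hid}\,\vol_{n\hid}(\PP K) = |\det T|^{\hid(1-n)}\,\vol_{n\hid}(\PP K).
\]
Pairing this with $\vol_n(TK)^{n\hid-\hid} = |\det T|^{\hid(n-1)}\vol_n(K)^{n\hid-\hid}$ makes the two exponents $\hid(n-1)$ and $\hid(1-n)$ cancel, yielding the desired $GL_n(\R)$-invariance.

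There is no serious obstacle here; the only thing to be careful about is keeping straight the distinction between $\overline{T^{-t}}$ acting on $\Rnhi$ and its own transpose, so that the identities $\overline{A}^{t}=\overline{A^{t}}$ and $\det\overline{A}=(\det A)^{\hid}$ are applied in the correct place. Combining $GL_n(\R)$-invariance with the translation invariance noted at the start gives invariance under the full affine group of $\R^n$, which is the claim.
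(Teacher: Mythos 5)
Your proposal is correct and follows essentially the same route as the paper: reduce to linear maps by translation invariance of $\P$, pass Proposition~\ref{prop:linear:transformations} through polarity to get $\PP TK=|\det T|^{-1}\overline{T}\,\PP K$, and cancel the resulting factor $|\det T|^{\hid(1-n)}$ against $\vol_n(TK)^{n\hid-\hid}=|\det T|^{\hid(n-1)}\vol_n(K)^{n\hid-\hid}$. The bookkeeping with $\det\overline{T}=(\det T)^{\hid}$ and the scalar homogeneity of polarity is exactly what the paper does.
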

\begin{proof}
    Since both volume, as a functional on $\R^n,$ and the
    surface area measure (which define $\P K)$ are translation invariant, it suffices to consider only a linear transformation. Then, the claim is immediate from Proposition~\ref{prop:linear:transformations}. Indeed, taking polarity, one obtains
    $$\PP TK=|\det T|^{-1}\overline{T}\PP K.$$
    Noting that the determinant of $\overline{T},$ considered as a matrix on $\R^{n\hid},$ is equal to $|\det T|^\hid,$ one obtains $\vol_{n\hid}(\PP TK)=|\det T|^{\hid-n\hid}\vol_{n\hid}(\PP K)$. The claim follows.
\end{proof}

Throughout this work, we will use that $\P$ is continuous as an operator on $\conbod$. 

\begin{proposition}[Continuity of $\P$] 
\label{p:continuityofmultdimproj}
    Let $\{K_j\}_j$ be a sequence of convex bodies such that $K_j\to K$ for some $K\in\conbod$ with respect to the Hausdorff metric on $\conbod$. Then, for every $\hid\in \mathbb{N}$, one has $\P K_j\to \P K$ with respect to the Hausdorff metric on $\mathcal{K}^{n\hid}.$
\end{proposition}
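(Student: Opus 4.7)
The plan is to reduce the statement to uniform convergence of support functions on $\S^{nm-1}$, since Hausdorff convergence of convex bodies in $\mathcal{K}^{nm}$ is equivalent to uniform convergence of support functions on the sphere. Given the integral representation
\[
h_{\Pi^m K}(\bar\theta) = \int_{\mathbb{S}^{n-1}} \max_{1\leq i \leq m}\langle u,\theta_i\rangle_- \, d\sigma_K(u)
\]
from \eqref{eq_supp_again}, the problem reduces to two standard ingredients: pointwise convergence of these integrals and equicontinuity of the family in $\bar\theta$.

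First, I would establish pointwise convergence $h_{\Pi^m K_j}(\bar\theta)\to h_{\Pi^m K}(\bar\theta)$ for each fixed $\bar\theta$. This is a classical consequence of $K_j\to K$ in the Hausdorff metric: the surface area measures $\sigma_{K_j}$ converge weakly to $\sigma_K$, and the integrand $u\mapsto \max_i\langle u,\theta_i\rangle_-$ is continuous on $\mathbb{S}^{n-1}$. (Alternatively, invoke joint continuity of mixed volumes together with the representation $h_{\Pi^m K}(\bar\theta) = nV(K[n-1], C_{-\bar\theta})$.)

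Second, I would show that $\{h_{\Pi^m K_j}\}_j$ is equi-Lipschitz on $\S^{nm-1}$. Using the elementary bound
\[
\bigl|\max_i \langle u,\theta_i\rangle_- - \max_i\langle u,\theta_i'\rangle_-\bigr| \leq \max_i|\langle u,\theta_i-\theta_i'\rangle| \leq |\bar\theta-\bar\theta'|
\]
valid for all $u\in \mathbb{S}^{n-1}$, integration against $\sigma_{K_j}$ gives
\[
|h_{\Pi^m K_j}(\bar\theta)-h_{\Pi^m K_j}(\bar\theta')| \leq \sigma_{K_j}(\mathbb{S}^{n-1})\,|\bar\theta-\bar\theta'|.
\]
Since $K_j\to K$ implies uniform boundedness of the total surface areas $\sigma_{K_j}(\mathbb{S}^{n-1})$, this yields an equi-Lipschitz estimate with a constant independent of $j$.

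Finally, pointwise convergence together with equicontinuity on the compact set $\S^{nm-1}$ upgrades to uniform convergence (a standard Arzel\`a--Ascoli argument: any subsequence has a uniformly convergent sub-subsequence, whose limit must agree with $h_{\Pi^m K}$ by the pointwise step). Uniform convergence of support functions is precisely Hausdorff convergence in $\mathcal{K}^{nm}$, completing the proof. There is no real obstacle here; the only subtlety is ensuring uniformity in $\bar\theta$, which is handled cleanly by the equi-Lipschitz bound above.
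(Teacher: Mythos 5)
Your proof is correct and follows essentially the same route as the paper: both reduce Hausdorff convergence to uniform convergence of the support functions $h_{\P K_j}$ on $\S$ and derive it from the representation $h_{\P K}(\bar\theta)=nV(K[n-1],C_{-\bar\theta})=\int_{\s}\max_i\langle u,\theta_i\rangle_-\,d\sigma_K(u)$. The paper gets uniformity from joint continuity of mixed volumes plus compactness of $\S$, while you make it explicit via the equi-Lipschitz bound and weak convergence of surface area measures; these are interchangeable implementations of the same idea.
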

\begin{proof} The statement that $\P K_j \to \P K$ in the Hausdorff metric is equivalent to the claim that $h_{\P K_j} \to h_{\P K}$ uniformly on $\S$. One easily verifies that the map $\S \to \conbod$ defined by $\bar\theta \mapsto C_{-\bar\theta}$ is continuous. As we have seen, $h_{\P K}(\bar\theta) = nV_n(K[n - 1], C_{-\bar\theta})$. The mixed volumes are continuous in the Hausdorff metric \cite[p. 280]{Sh1}, so the map $\conbod \times \S \to \mathbb R^+$ defined by $(K, \bar\theta) \to h_{\P K}(\bar\theta)$ is continuous. Since $\S$ is compact, this implies that the map $\conbod \to C(\S)$ defined by $K \mapsto h_{\P K}$ is continuous, which precisely means that if $K_j \to K,$ then $h_{\P K_j} \to h_{\P K}$ uniformly on $\S$.
\end{proof}

In the remainder of this subsection, we study some additional properties of the operator $\P K$. It is well-known that $\Pi K$ uniquely determines $K$ if $K$ is assumed to be origin-symmetric \cite[§10.9]{Sh1}. We first show that for $m \ge 2$, $\P K$ determines $K$ (up to translations) without the symmetry restriction.

\begin{proposition}\label{prop:proj_uniqueness} Fix $m\geq 2$ and let $K, L \in\conbod$. If $\Pi^m K = \Pi^m L$ then $K = L + u$ for some $u \in \R^n$.
\end{proposition}
\begin{proof}Consider the vectors $\bar\theta = (x, y, o, \ldots, o)$: as $x, y$ vary in $\R^n$, $C_{\bar\theta}$ varies over the set of triangles with a vertex at the origin, so the assumption $\P K = \P L$ implies that $V_n(K[n-1], T) = V_n(L[n-1], T)$ for any triangle $T$ with a vertex at the origin, and hence, by translation invariance of mixed volumes, for any triangle. By linearity, it then follows that $V_n(K[n-1], M) = V_n(L[n-1], M)$ for any $M \in \conbod$ which is a Minkowski sum of triangles, and by continuity, when $M$ is a Hausdorff limit of such Minkowski sums (a so-called ``triangle body''). Again by linearity, this implies that $V_n(K[n-1], M) = V_n(L[n-1], M)$ if $M$ is a generalized triangle body, i.e., if there exist triangle bodies $M_1, M_2$ such that $M + M_1 = M_2$. Finally, since the set of generalized triangle bodies is dense in $\conbod$ \cite[Corollary 3.5.12]{Sh1}, we have $V_n(K[n-1], M) = V_n(L[n-1], M)$ for any $M \in \conbod$. This is well-known to imply that $K$ and $L$ are translates of each other \cite[Theorem 8.1.2]{Sh1}.
\end{proof}

 Observe that item 3.) of Proposition~\ref{p:facts} and Theorem~\ref{t:set_con} show that, if $K$ is a $n$-dimensional simplex, then $\P K$ is not origin-symmetric. We characterize the symmetry of $\P K$ in the next proposition.
 
\begin{proposition}
    Fix $m\geq 2$ and let $K\in\conbod$. Then, $\P (-K)=-\P K$, and $-\P K=\P K$ if and only if $K$ is symmetric.
\end{proposition}
\begin{proof}
    Recall that for any $\bar\theta \in \mathbb R^{n\hid}$, we have $C_{-\bar\theta} = -C_{\bar\theta}$. Hence, by the linear invariance of mixed volumes, we have
    \begin{align*}
        h_{\P K}(-\bar\theta)&= nV_n(K[n-1],C_{\bar\theta})=nV_n(-K[n-1],-C_{\bar\theta})
        \\
        &= n V_n(-K[ n- 1], C_{-\bar\theta}) = h_{\P (-K)}(\bar\theta).
    \end{align*}
    This immediately yields $\P (-K)=-\P K$. Since $\P (K + x) = \P K$ for any $x \in \Rn$, it follows more generally that $\P K$ is origin-symmetric if $K$ is symmetric.

     Conversely, if $\P K = -\P K$ then $\P K = \P (-K)$, which implies by Proposition \ref{prop:proj_uniqueness} that $K$ is a translate of $-K$, i.e., $K$ is symmetric.
\end{proof}

Finally, we study $\Pi^2 B_2^2$ as a concrete example; recall the concept of mean width from \eqref{eq:mean_wdith}.

\begin{example}
\label{ex:ball}
When $K=B_2^n$, Definition~\ref{def:higher_proj} becomes
$$h_{\P \B}(\bar{\theta})=n\kappa_n w_n(C_{\bar\theta}).$$

\noindent Consider the simple case when $n=\hid =2.$ Here, 
$$2\pi w_2(L)=2V_2(B_2^2,L)=2V_2(L,B_2^2)=\vol_1(\partial L).$$
\noindent Observe that, for $\bar\theta=(\theta_1,\theta_2)\in\mathbb{S}^{3}$: 
\[
h_{\Pi^2 B_2^2}((\theta_1,\theta_2))=\vol_1(\partial \conv([o,\theta_1],[o,\theta_2]))=|\theta_1|+|\theta_2|+|\theta_1-\theta_2|,
\]
where $\theta_1,\theta_2\in \R^2$ are such that $|\theta_1|^2+|\theta_2|^2=1.$ In particular, this shows that $\Pi^2 B_2^2$ is not a direct product of balls, as $h_{B_2^2\times B_2^2}((\theta_1,\theta_2))=|\theta_1|+|\theta_2|.$ In fact, this shows that $\Pi^2 B_2^2 \supset B_2^2\times B_2^2.$ It is perhaps more natural to compare $\Pi^2 B_2^2$ to $\p B_2^2 \times \p B_2^2 = (2B_2^2) \times (2B_2^2).$ Here,
\[h_{(2B_2^2)\times (2B_2^2)}((\theta_1,\theta_2))=|\theta_1|+|\theta_2| + |\theta_1|+|\theta_2| \geq h_{\Pi^2 B_2^2}((\theta_1,\theta_2))\]
from the triangle-inequality, and so $\Pi^2 B_2^2 \subset (2B_2^2) \times (2B_2^2).$

\noindent One can verify that $\Pi^2 B_2^2$ is, in fact, not of the form $T(K_1\times K_2)$ for $K_1,K_2\in\mathcal{K}^2$ and $T$ an affine transformation of $\R^4$.
\end{example}

\subsection{Properties of the mth-Order Centroid Body}
\label{sec:cent_prop}
We now list properties concerning the $m$-th centroid body operator $\G$, whose definition is given in Definition~\ref{def:mth_centroid}. Our first step is to determine the behavior of $\G L$ under linear transformations.
\begin{proposition}
\label{p:cen_tran}
     Let $T\in GL_n(\R).$ For $\hid \in \mathbb{N}$ and a compact set $L\subset \Rnhi$ with positive volume, one has
    \begin{equation}
        \G  \overline{T} L = T\G  L. 
        \label{trans_cent}
    \end{equation}
\end{proposition}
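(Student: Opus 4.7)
The plan is to verify the claimed identity by checking that both bodies have the same support function, exploiting the change of variables $\bar{x} = \overline{T}\bar{y}$ in the integral defining $h_{\G \overline{T}L}$.

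First I would compute $\vol_{n\hid}(\overline{T}L) = |\det \overline{T}| \vol_{n\hid}(L) = |\det T|^{\hid} \vol_{n\hid}(L)$, where the second equality uses the block-diagonal structure of $\overline{T}$ acting on $\Rnhi = (\Rn)^\hid$ (the same observation was used in the proof of Proposition~\ref{p:affine_invar_Zhang}). Then, fixing $\theta \in \s$, I would start from the definition
\[
h_{\G \overline{T}L}(\theta) = \frac{1}{\vol_{n\hid}(\overline{T}L)} \int_{\overline{T}L} \max_{1 \leq i \leq \hid} \langle x_i, \theta \rangle_{-} d\bar{x}
\]
and perform the substitution $\bar{x} = \overline{T}\bar{y}$, so that $x_i = Ty_i$ and $d\bar{x} = |\det T|^{\hid} d\bar{y}$. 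The Jacobian factor cancels with the one appearing in $\vol_{n\hid}(\overline{T}L)$, yielding
\[
h_{\G \overline{T}L}(\theta) = \frac{1}{\vol_{n\hid}(L)} \int_L \max_{1 \leq i \leq \hid} \langle Ty_i, \theta \rangle_{-} d\bar{y} = \frac{1}{\vol_{n\hid}(L)} \int_L \max_{1 \leq i \leq \hid} \langle y_i, T^t\theta \rangle_{-} d\bar{y},
\]
which is precisely $h_{\G L}(T^t \theta)$. Finally, using the standard identity $h_{TM}(\theta) = h_M(T^t\theta)$ for any convex body $M \subset \Rn$ and $T \in GL_n(\R)$, the right-hand side equals $h_{T\G L}(\theta)$, and since support functions uniquely determine convex bodies, this establishes $\G \overline{T}L = T\G L$.

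There is no real obstacle here; the argument is a routine change-of-variables computation, and the only subtlety worth flagging is that the two factors of $|\det T|^{\hid}$ (one from the Jacobian, one from the volume normalization) must be kept track of carefully so they cancel cleanly, and that the negative-part function $(\cdot)_-$ is $1$-homogeneous, so pulling $T$ across the inner product causes no sign issue.
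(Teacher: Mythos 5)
Your proof is correct and follows essentially the same route as the paper: a change of variables $\bar{x} = \overline{T}\bar{y}$ in the defining integral, cancellation of the Jacobian factor $|\det T|^{\hid}$ against the volume normalization, and the identity $\langle Ty_i,\theta\rangle_- = \langle y_i, T^t\theta\rangle_-$ together with $h_{\G L}(T^t\theta) = h_{T\G L}(\theta)$. No issues.
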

\begin{proof}
    The result follows from the definition. Indeed, from \eqref{eq:cen_hi}, one has
    \begin{align*}
    h_{\G  \overline{T} L}(\theta)&=\frac{1}{\vol_{n\hid}(\overline{T}L)} \int_{\overline{T}L} \max_{1\leq i \leq \hid} \langle x_i, \theta\rangle_- d\bar{x}
    \\
    &=\frac{1}{\vol_{n\hid}(L)} \int_{L} \max_{1\leq i \leq \hid} \langle Tx_i, \theta\rangle_- d\bar{x}
    \\
    &=\frac{1}{\vol_{n\hid}(L)} \int_{L} \max_{1\leq i \leq \hid} \langle x_i, T^t\theta\rangle_- d\bar{x}
    \\
    &=h_{\G  L}(T^t\theta)=h_{T\G  L}(\theta).
    \end{align*}
\end{proof}

Interestingly, $\G L$ is actually a smooth convex body in $\R^n$.
\begin{proposition}
	       \label{p:GammaC1}
            For compact $L\subset\Rnhi$ with positive volume, $\G L$ is a $C^1$ convex body in $\Rn$.
        \end{proposition}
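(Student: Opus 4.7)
The plan is to verify that the support function $h_{\G L}$ is differentiable on $\R^n\setminus\{o\}$, which is the standard characterization of $C^1$ convex bodies. Writing
\[
h_{\G L}(\theta)=\frac{1}{\vol_{n\hid}(L)}\int_L \phi_{\bar x}(\theta)\,d\bar x,\qquad \phi_{\bar x}(\theta):=\max(0,-\langle x_1,\theta\rangle,\dots,-\langle x_{\hid},\theta\rangle),
\]
each $\phi_{\bar x}$ is convex and positively $1$-homogeneous, hence so is $h_{\G L}$, making it the support function of a compact convex set. Boundedness follows from $h_{\G L}(\theta)\le |\theta|\max_{\bar x\in L,i}|x_i|$, while the estimate $h_{\G L}(\theta)+h_{\G L}(-\theta)\ge \vol_{n\hid}(L)^{-1}\int_L|\langle x_1,\theta\rangle|\,d\bar x>0$ for $\theta\ne o$ (which holds because $L$ has positive volume) shows that $\G L$ has nonempty interior, so it is a convex body.

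For the $C^1$ property, fix $\theta_0\ne o$. The key observation is that, for almost every $\bar x\in L$, $\phi_{\bar x}$ is differentiable at $\theta_0$. Indeed, $\phi_{\bar x}$ is the pointwise maximum of the $\hid+1$ linear functionals $\theta\mapsto 0,\ -\langle x_1,\theta\rangle,\dots,-\langle x_{\hid},\theta\rangle$, with respective $\theta$-gradients $o,-x_1,\dots,-x_{\hid}$. Such a maximum fails to be differentiable at $\theta_0$ precisely when two of these functionals attain the maximum at $\theta_0$ \emph{and} have distinct gradients. For us this forces one of two algebraic conditions on $\bar x$: either $\langle x_i,\theta_0\rangle=0$ for some $i$ with $x_i\ne o$, or $\langle x_i-x_j,\theta_0\rangle=0$ for some $i\ne j$ with $x_i\ne x_j$. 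Each cuts out a proper affine hyperplane in $\R^{n\hid}$, so the ``bad'' set has $(n\hid)$-dimensional Lebesgue measure zero.

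To conclude, I invoke the standard criterion that a convex function $H$ is differentiable at $\theta_0$ iff $H'(\theta_0;v)+H'(\theta_0;-v)=0$ for every $v\in\R^n$. The difference quotient $t\mapsto t^{-1}(\phi_{\bar x}(\theta_0+tv)-\phi_{\bar x}(\theta_0))$ is monotone in $t>0$ by convexity of $\phi_{\bar x}$ and is dominated by $|v|\max_i|x_i|$, which is uniformly bounded over the compact set $L$. Monotone (or dominated) convergence therefore permits interchanging the limit with the integral:
\[
h'_{\G L}(\theta_0;v)=\frac{1}{\vol_{n\hid}(L)}\int_L \phi'_{\bar x}(\theta_0;v)\,d\bar x,
\]
and likewise for $-v$. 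By the previous paragraph $\phi'_{\bar x}(\theta_0;v)+\phi'_{\bar x}(\theta_0;-v)=0$ for a.e.\ $\bar x$, so summing the two identities gives $h'_{\G L}(\theta_0;v)+h'_{\G L}(\theta_0;-v)=0$, as required.

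The one step meriting genuine care is the null-set argument for the non-smoothness locus of $\bar x\mapsto \phi_{\bar x}$ at a given $\theta_0$; once that is in hand, everything else is routine convex analysis combined with a standard domination argument, and no new ingredients are needed.
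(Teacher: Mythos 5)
Your proof is correct and follows essentially the same route as the paper's: both differentiate under the integral sign by observing that, for fixed $\theta_0\neq o$, the integrand $\bar x\mapsto \max_i\langle x_i,\cdot\rangle_-$ is differentiable at $\theta_0$ off a finite union of hyperplanes in $\Rnhi$, and then apply dominated convergence. The only differences are cosmetic: you make explicit the measure-zero argument that the paper records as an ``evident'' property, you additionally verify that $\G L$ has nonempty interior, and you conclude $C^1$ from the standard fact that a convex function differentiable on an open set is continuously differentiable there, whereas the paper checks continuity of the gradient by a second application of dominated convergence.
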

        \begin{proof}
            Denote $f(\bar x,\xi) = \max_{1\leq i \leq \hid} \langle x_i, \xi \rangle_-$, where $\bar x=(x_1,\ldots,x_\hid)\in\Rnhi$ and $\xi\in\Rn$. Let $\partial_2 f(\bar x,\xi)$ be the vector in $\R^n$ given by the gradient of the function $\xi \mapsto f(\bar x,\xi)$. Let $R$ be the radius of a ball containing $L$.
            We claim that $f$ has the following properties:
            \begin{enumerate}
                \item[(a).] $|f(\bar x,\xi)-f(\bar x, \eta)| \leq (R+1) |\xi - \eta|$ for $|\bar x| \leq R+1$ and all $\xi,\eta \in \Rn$.
                \item[(b).] $|f(\bar x,\xi)| \leq (R+1)|\xi|$ for $|\bar x| \leq R+1$ and all $\xi \in \Rn$.
                \item[(c).] 
                For every $\xi \neq o$, there is an open set $A(\xi) \subseteq \Rnhi$ satisfying $\vol_{nm}(\R^{nm}\setminus A(\xi))=0$ and, if $\bar x \in A(\xi)$, then the function $f(\bar x, \cdot)$ is $C^1$ in a neighborhood of $\xi$.
                \item[(d).] $|\partial_2 f(\bar x,\xi)| \leq (R+1)$ for $|\bar x| \leq R+1$, $\bar x \in A(\xi)$, $\xi \neq o$.

            \end{enumerate}
            Indeed, for item (a), write $f(\bar x,\xi)=h_{C_{-\bar x}}(\xi)$. Then, we use that $h_{C_{-\bar x}}$ is sublinear to obtain 
            \[
            h_{C_{-\bar x}}(\xi)-h_{C_{-\bar x}}(\eta)\leq h_{C_{-\bar x}}(\xi-\eta).
            \]
            We introduce the notation $x_\xi$ for any element of $ \{x_1,\dots,x_m\}$ where the maximum in the definition of $h_{C_{-\bar x}}(\xi)$ is obtained for $\xi$, i.e. $h_{C_{-\bar x}}(\xi)=\langle x_\xi,\xi\rangle_-$. Then, by the Cauchy-Schwarz inequality,
            \[
            h_{C_{-\bar x}}(\xi-\eta) = \langle x_{\eta-\xi}, \xi -\eta\rangle_- \leq |\langle x_{\eta-\xi}, \xi -\eta\rangle|\leq |x_{\eta-\xi}||\eta-\xi|\leq (R+1)|\eta-\xi|.
            \]
            By permuting $\eta$ and $\xi$, the claim follows. Item $(b)$ follows from $(a)$ by picking $\eta=o$.

            For part (c), first note that the function $\xi \mapsto \langle x, \xi\rangle_-$ is $C^1$ except on the subspace $\{x\in\R^n:\langle x, \xi\rangle = o\}$, and that if $f, g$ are $C^1$ on some neighborhood then $\max(f, g)$ is $C^1$ away from the closed set $\{f = g\}$, as for any $x$ such that $f(x) > g(x)$ there exists a neighborhood $V$ such that $\left.\max(f, g)\right|_V = \left.f\right|_V$, and similarly if $g(x) > f(x)$. Hence $\xi \mapsto \max_i \langle x_i, \xi\rangle_-$ is $C^1$ in a neighborhood of $\xi$ for a fixed $\bar x$ satisfying $\xi\mapsto h_{C_{-\bar x}}(\xi)$ is differentiable. We then define the set $A(\xi)\subset \R^{nm}$ to be the set of all $\bar x$ with this property; it is defined as the complement of the set
            $$\bigcup_{i = 1}^m \{\bar x\in\R^{nm}:\langle x_i, \xi\rangle = o\} \cup \bigcup_{\substack{i,j\in\{1,\ldots,m\}\\i\neq j}} \{\bar x\in\R^{nm}:\langle x_i, \xi\rangle = \langle x_j, \xi\rangle\},$$
            which, in turn, is a union of linear subspaces and hence a closed set of Lebesgue measure $0$.

            For $(d)$, we use that $h_{C_{-\bar x}}$ is $C^1$ near $\xi$ when $\xi\neq o$ and $\bar x\in A(\xi)$ to write, for $t>0$ small and $u\in\s$,
            \[
            h_{C_{-\bar x}}(\xi+tu)=h_{C_{-\bar x}}(\xi)+t\langle \nabla h_{C_{-\bar x}}(\xi),u \rangle +o(t)
            \]
            where $o$ is a function so that $o(t)/t \to 0$ as $t\to 0$. Therefore, from item (a),
            \[
            \left|\langle \nabla h_{C_{-\bar x}}(\xi),u \rangle +\frac{o(t)}{t}\right| = \left|\frac{h_{C_{-\bar x}}(\xi+tu)-h_{C_{-\bar x}}(\xi)}{t}\right| \leq (R+1).
            \]
            Sending $t\to 0$, we obtain
            \[
            |\langle \nabla h_{C_{-\bar x}}(\xi),u\rangle| \leq R+1.
            \]
            Taking supremum over all $u \in \s$, we obtain that $|\nabla h_{C_{-\bar x}}(\xi)| \leq R+1$.

            Now, we are ready to prove $\Gamma^m L$ is a $C^1$ convex body. It suffices to show that $h_{\G L}$ is $C^1$ outside the origin.
            From (a) we deduce immediately that $F(\xi)= \vol_{nm}(L)h_{\G L}(\xi) =\int_L f(\bar x,\xi)d\bar x$ is continuous in $\Rn$. Fix $\xi \neq o$ and set $Q(\xi) = \int_L \partial_2 f(\bar x,\xi) d\bar x$ (which is well-defined by $(c)$). For any sequence $\eta_j \to o$ in $ \Rn$,
            \begin{align*}\big| F(\xi+\eta_j) - F(\xi) &-\langle Q(\xi),\eta_j\rangle\big| \leq
            \\
            &\int_L |f(\bar x,\xi+\eta_j) - f(\bar x,\xi) - \langle\partial_2 f(\bar x,\xi),\eta_j\rangle| d\bar x.\end{align*}
            By $(c)$, the integrand tends to $0$ for a.e $\bar x \in \Rnhi$. By $(b)$ and $(d)$, we may apply the dominated convergence theorem to deduce that the last integral tends to $0$. Since this happens for every sequence $\eta_j,$ we deduce that $F$ is differentiable at $\xi$ and its differential is $Q(\xi)$.

            Finally, let $\xi_j \to \xi$. We compute:
            \[|Q(\xi_j) - Q(\xi)| \leq \int_L |\partial_2 f(\bar x, \xi_j) - \partial_2 f(\bar x, \xi)|d \bar x.\]

            Again by $(c)$, the integrand tends to $0$ for a.e. $\bar x \in \Rnhi$. By $(d)$, we may apply the dominated convergence theorem and we obtain that $Q(\xi_j) \to Q(\xi)$ showing that $F$ is $C^1$.
        \end{proof}

    The next lemma shows that $\G$ and $\PP$ are connected through mixed volumes.
\begin{lemma}
        \label{lem_duality}
        For $K\in\conbod$ and $L \in \sta,$ one has
        \[ \widetilde V_{-1}(L[n\hid+1], \PP K) = \vol_{n\hid}(L) \frac{n\hid+1}{\hid} V_n(K[n-1], \G  L),\]
where the dual mixed volume $\widetilde V_{-1}(\cdot,\cdot)$ is $n\hid$-dimensional and the mixed volume $V_n(\cdot,\cdot)$ is $n$-dimensional.
\end{lemma}
\begin{proof}
From the definition of the dual mixed volume and mixed volume, we have
        \begin{align}
                \widetilde V_{-1}&(L[n\hid+1], \PP K)
                \\
                &= \frac 1{n\hid} \int_{\S} \rho_L(\bar{\theta})^{n\hid+1} \int_{\s} \max_{1\leq i \leq \hid} \langle \theta_i, \xi \rangle_- d \sigma_K(\xi) d \bar{\theta}\\
                &= \frac 1{n\hid} \int_{\s} \int_{\S} \rho_L(\bar{\theta})^{n\hid+1} \max_{1\leq i \leq \hid} \langle \theta_i, \xi \rangle_- d \bar{\theta} d \sigma_K(\xi)\\
                &= \frac {n\hid+1}{n\hid} \int_{\s} \int_{\S} \int_0^{\rho_L(\bar{\theta})} t^{n\hid-1} \max_{1\leq i \leq \hid} \langle t \theta_i, \xi \rangle_- dt d \bar{\theta} d \sigma_K(\xi)\\
                &= \frac {n\hid+1}{n\hid} \int_{\s} \int_L \max_{1\leq i \leq \hid} \langle x_i,  \xi \rangle_- d\bar{x} d \sigma_K(\xi)\\
                &= \vol_{n\hid}(L)\frac {n\hid+1}{n\hid} \int_{\s} h_{\G  L}(\xi) d \sigma_K(\xi)\\
                &= \vol_{n\hid}(L)\frac{n\hid+1}\hid V_n(K[n-1], \G  L).
        \end{align}
\end{proof}

We conclude this section by showing an interaction of $\G$ and $\PP$ when applied to ellipsoids. Notice that $\PP:\conbod \to \Conbod$ and $\G:\Conbod \to \conbod$.

\begin{lemma}
\label{l:class}
Let $E$ be a centered ellipsoid in $\R^n.$ Then, $$\G  \PP E= \frac{\hid}{n\hid+1} \frac{1}{\vol_n(E)} E.$$
\end{lemma}
\begin{proof}
First, we show that $\G\PP \B$ is rotation invariant, and thus a dilate of $\B.$ We first notice that, for every $T\in \mathcal{O}(n),$ \eqref{trans_cent} and Proposition~\ref{prop:linear:transformations} yield $\G \overline{T} L= T \G L$ and $\PP T K = \overline{T} \PP K$ for $L\in\Conbod$ and $K\in\conbod.$ Thus, for $K=\B$ and $L=\PP \B,$ one has
 \[\G\PP \B = \G\PP T \B = \G \overline{T} \PP \B = T \G\PP \B;\]
this means that $\G\PP \B$ is rotation invariant and thus a ball. Next, let $E\in\conbod$ be a centered ellipsoid. Then, there exists $T\in GL_n(\R)$ so that $E=T \B.$ From Proposition~\ref{prop:linear:transformations}, we have
     $$\PP T \B=\overline T|\det T|^{-1}\PP \B.$$
     Applying $\G,$ we obtain
     \begin{align*}
     \G\PP T \B
     &=\G\overline T|\det T|^{-1}\PP \B \\
     &=|\det T|^{-1}T \G\PP \B \\
     &=\kappa_n\vol_{n}(E)^{-1}C_{n,\hid} T \B \\
     &=\kappa_n\vol_{n}(E)^{-1} C_{n,\hid} E
     \end{align*}
     for some $C_{n,\hid}>0.$  To establish the formula for $C_{n,m}$, set $K=\B$ and $L= \PP \B$ in in Lemma~\ref{lem_duality} to obtain
   \begin{align*}
		\frac{m}{nm+1} &=  V_n(\B[n-1], \G \PP \B) 
  = V_n(\B [n-1], C_{n,m} \B)=\kappa_nC_{n,m}.
    \end{align*}
\end{proof}

\section{The mth-order Radial Mean Bodies}
\label{sec:new_rad}
The following is in \cite[Section 6]{FLM20}. Firstly, let $\psi:[0,\infty)\to[0,\infty)$ be an integrable function that is right continuous and $s$-concave for some $s>0$. Then, its Mellin transform is the map given by
\begin{equation} \mathcal{M}_\psi :p \mapsto
    \begin{cases}
    \int_0^\infty t^{p-1}(\psi(t)-\psi(0))dt, & p\in (-1,0),
    \\
    \int_0^\infty t^{p-1}\psi(t)dt, & p>0,
    \end{cases}
    \label{eq:Mel}
\end{equation}
When viewed as a function on $\mathbb{C}$, the Mellin transform is analytic on the half-plane $\{p\in \mathbb{C}:\mathrm{Re}(p)>0\}$. If $\psi$ is additionally right-differentiable at zero, then its Mellin transform $\mathcal{M}_\psi$ is a meromorphic function on $\{p\in \mathbb{C}:\mathrm{Re}(p)>-1\}$ with a simple pole at the origin. This is verified via integration by parts: for $p\in (-1,0)\cup(0,+\infty),$ one has
\begin{equation}
\label{eq:mellin_integration}
\mathcal{M}_\psi(p) = \frac{1}{p}\int_{0}^\infty t^p (-\psi(t))^\prime dt = \int_0^1 t^{p-1}(\psi(t)-\psi(0))dt +\frac{\psi(0)}{p} + \int_1^\infty t^{p-1}\psi(t)dt.\end{equation}

For $s>0$, let $\psi_s(t)=(1-t)_+^{1/s}$. Then, we have the following monotonicity result.

\begin{lemma}[The Mellin-Berwald inequality, Theorem 6.1 from \cite{FLM20}] 
\label{l:mono}
Let $\psi$ be a non-increasing, $s$-concave function, $s>0$. Then the function
\begin{equation}G_{\psi}(p):=\left(\frac{\mathcal{M}_\psi(p)}{\mathcal{M}_{\psi_s}(p)}\right)^{1/p}=\left(p\binom{p+\frac{1}{s}}{p}\mathcal{M}_\psi(p)\right)^{1/p}
\label{eq:Fradelizi}
\end{equation}
is decreasing on $(-1,\infty)$. Additionally, if there is equality for any two $p,q\in(-1,\infty),$ then $G_\psi(p)$ is constant. Furthermore, $G_\psi (p)$ is constant if and only if $\psi^s$ is affine on its support.
\end{lemma}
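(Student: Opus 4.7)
The plan is to establish the monotonicity of $G_\psi$ by a comparison-to-extremal argument in the spirit of the classical Berwald inequality, using a standard moment-crossing lemma. After vertically rescaling, I may assume $\psi(0) = 1$. Fix $q \in (-1, \infty)$, and introduce the extremal family $\phi_\beta(t) := (1-\beta t)_+^{1/s}$, a horizontal rescaling of $\psi_s$ (so that $\phi_\beta^s$ is affine on its support). A direct substitution gives $\mathcal{M}_{\phi_\beta}(p) = \beta^{-p}\mathcal{M}_{\psi_s}(p)$ for all $p \in (-1, \infty)$; I choose $\beta > 0$ so that $\mathcal{M}_{\phi_\beta}(q) = \mathcal{M}_\psi(q)$, which forces $\beta^{-1} = G_\psi(q)$, hence $\beta^{-p} = G_\psi(q)^p$ for every $p$.

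The key observation is that $D(t) := \psi^s(t) - (1 - \beta t)$ is concave (as the sum of a concave and an affine function) with $D(0) = 0$. For any such $D$, the ratio $D(t)/t$ is non-increasing on $(0, \infty)$, so $D$ is either one-signed or changes sign exactly once, and only from $+$ to $-$. Since $x \mapsto x^{1/s}$ is strictly increasing, $\psi(t) - \phi_\beta(t)$ has the same sign as $D(t)$ wherever both functions are positive; a short case analysis at the boundaries of $\mathrm{supp}\,\psi$ and $\mathrm{supp}\,\phi_\beta = [0, 1/\beta]$ shows that $h := \psi - \phi_\beta$ has at most one sign change on $[0, \infty)$, always from $+$ to $-$. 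The moment-matching condition $\int_0^\infty t^{q-1}h(t)\,dt = 0$ (which holds in both conventions of $\mathcal{M}$, since the $-\psi(0)$ and $-\phi_\beta(0)$ terms cancel) then excludes the one-signed possibilities unless $h \equiv 0$. With the crossing at some $t_0 > 0$, one has for $p > q$
\[
\int_0^\infty t^{p-1}h(t)\,dt = \int_0^\infty \bigl(t^{p-q} - t_0^{p-q}\bigr)\, t^{q-1}h(t)\,dt \leq 0,
\]
since the factor $(t^{p-q} - t_0^{p-q})$ is of opposite sign to $h(t)$ at every $t$; the inequality reverses for $-1 < p < q$. Translating back, $\mathcal{M}_\psi(p) \leq G_\psi(q)^p\,\mathcal{M}_{\psi_s}(p)$ for $p > q$, and the reverse for $p < q$.

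Dividing by $\mathcal{M}_{\psi_s}(p)$ (which is positive for $p > 0$ and negative for $p \in (-1, 0)$) and then raising to the $1/p$-th power (which reverses inequalities for $p < 0$), a systematic sign-check in each of the sub-cases according to the signs of $p$ and $q$ shows that in every regime one recovers $G_\psi(p) \leq G_\psi(q)$ for $p > q$ and $G_\psi(p) \geq G_\psi(q)$ for $p < q$. Equality at any two distinct points forces $h \equiv 0$ in the crossing estimate, hence $\psi = \phi_\beta$ a.e., so $\psi^s(t) = (1 - \beta t)_+$ is affine on its support; the converse is immediate from the explicit calculation $G_{\phi_\beta}(p) \equiv \beta^{-1}$, which is constant. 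The main technical hurdle is the careful sign-bookkeeping — the piecewise definition of $\mathcal{M}_\psi$ near $p = 0$, the sign of $\mathcal{M}_{\psi_s}$ across $p = 0$, and the direction-reversal coming from the $1/p$-th root for negative $p$ combine in a way that is easy to miscount — together with the endpoint case analysis required to verify the single-crossing property in the sub-step using the relative lengths of $\mathrm{supp}\,\psi$ and $[0, 1/\beta]$.
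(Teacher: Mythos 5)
The paper does not actually prove this lemma --- it is quoted from \cite{FLM20} (with the equality analysis attributed to \cite{DL23}) --- so there is no in-paper argument to compare against. Your single-crossing Berwald argument is the standard route to such statements and, modulo the point below, it is sound: the horizontal rescaling $\phi_\beta(t)=(1-\beta t)_+^{1/s}$ with the moment matched at $q$ forces $\beta^{-1}=G_\psi(q)$; concavity of $\psi^s-(1-\beta t)$ on $\mathrm{supp}\,\psi$ together with its vanishing at $0$ gives the single sign change of $\psi-\phi_\beta$ from $+$ to $-$ (the boundary case $\mathrm{supp}\,\psi\supsetneq[0,1/\beta]$ makes $h\geq 0$ everywhere and is then killed by the matched moment, as you say); the crossing estimate and the sign bookkeeping across $p=0$ both check out, since $\mathcal{M}_{\psi_s}(p)=1/(p\binom{p+1/s}{p})$ is positive for $p>0$ and negative for $p\in(-1,0)$. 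Your equality analysis even lands on the correct extremal family $\psi(t)=\psi(0)(1-\beta t)_+^{1/s}$, which is sharper than the literal phrase ``$\psi^s$ affine on its support'' (that phrase, read naively, would admit $\chi_{[0,1]}$, for which $G_\psi$ is strictly decreasing).

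The one step that does not survive scrutiny is the opening reduction ``after vertically rescaling, I may assume $\psi(0)=1$.'' Since $\mathcal{M}_{c\psi}=c\,\mathcal{M}_\psi$, one has $G_{c\psi}(p)=c^{1/p}G_\psi(p)$, and the factor $c^{1/p}$ is not decreasing on $(-1,\infty)$ (it jumps from $0$ to $+\infty$ across $p=0$ when $c>1$, and is increasing on $(0,\infty)$ when $c<1$), so monotonicity of $G_\psi$ and of $G_{c\psi}$ are genuinely different assertions. Indeed $\psi=c\psi_s$ with $c\neq 1$ gives $G_\psi(p)=c^{1/p}$, which is not monotone on $(-1,\infty)$; the lemma is therefore only correct under the implicit normalization $\psi(0)=1$. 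You should state this as a hypothesis rather than a harmless reduction --- it is harmless for the paper's purposes, since the lemma is only ever applied to $\psi(r)=\Cov(r\bar\theta)/\vol_n(K)$, which satisfies $\psi(0)=1$, but as written your first sentence asserts a false invariance. With that hypothesis made explicit, the rest of your proof is complete.
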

The equality conditions of Lemma~\ref{l:mono} are not stated in \cite[Theorem 6.1]{FLM20} but implied by the proof; see \cite[Lemma 2.2]{LP25} for an explicit proof. We remark that the Mellin transform is defined, and the Mellin-Berwald inequality holds, for all $s$-concave functions, $s\in \R,$ but, for $s<0,$ the constants become more complicated and one must restrict to $p<-\frac{1}{s}$ for integrability. 

We now introduce the $m$th-order radial mean bodies.

\begin{definition}
\label{def:radial_new}
Let $K\in\conbod$. For $\hid\in\N$ and $p >-1,$ we define the \textit{$(\hid,p)$ radial mean bodies} of $K$, $R^\hid_p K,$ to be the star bodies in $\R^{n\hid}$ whose radial functions are given by, for $\bar{\theta}\in\S$:
\begin{equation}
    \rho_{R_p^{\hid} K}(\bar\theta)=\begin{cases}\left(\frac{1}{\vol_n(K)} \int_K \left(\min _{i=1, \ldots, \hid}\left\{\rho_{K-x}(-\theta_i)\right\}\right)^p d x\right)^\frac{1}{p}, & p>-1,p\neq 0;
    \\
    \exp\left(\frac{1}{\vol_n(K)}\int_K\log\left(\min _{i=1, \ldots, \hid}\left\{\rho_{K-x}(-\theta_i)\right\}\right)dx\right),  & p=0;
    \\
    \max_{x\in K}\min_{i=1,\dots,\hid}\rho_{K-x}(-\theta_i), & p=\infty.
    \end{cases}
\end{equation}
\end{definition}
It follows from Proposition~\ref{p:other_form} below that $R^\hid_p K$ is a star body for all $p>-1$. Notice that $R^1_p K=R_p K$, since, when $m=1$, Definition~\ref{def:radial_new} recovers \eqref{pth} when one uses the fact that the bodies $R_p K$ are origin-symmetric. We now show that $R^\hid_\infty K=D^\hid(K).$ 
\begin{proposition}
    Fix $\hid,n\in \N$ and $K\in\conbod.$ Then, $R^\hid_\infty K=D^\hid(K)$ and $R^\hid_p K \to \{o\}$ as $p\to (-1)^+$.
\end{proposition}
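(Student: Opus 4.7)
The plan is to identify $R^\hid_\infty K$ as the pointwise limit of the radial functions $\rho_{R^\hid_p K}$ as $p \to \infty$, and to show that this limit coincides with the radial function of $D^\hid K$ by a direct computation. No deep machinery is needed: the result falls out of unfolding definitions and the standard $L^p \to L^\infty$ passage.

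First, I would rewrite $\rho_{D^\hid K}(\bar\theta)$ as a pointwise supremum. By the definition of the higher-order difference body, $r\bar\theta \in D^\hid K$ if and only if there exists $y \in K$ such that $y - r\theta_i \in K$ for every $i$, which is equivalent to $r \leq \rho_{K - y}(-\theta_i)$ for every $i$. Taking the supremum over $r$ and $y$ yields
\[
\rho_{D^\hid K}(\bar\theta) \;=\; \sup_{y \in K}\, \min_{1 \leq i \leq \hid}\, \rho_{K - y}(-\theta_i).
\]

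Next, set $f_{\bar\theta}(x) := \min_{1 \leq i \leq \hid} \rho_{K - x}(-\theta_i)$ for $x \in K$, so that, by the definition of the $(\hid,p)$ radial mean body,
\[
\rho_{R_p^\hid K}(\bar\theta) \;=\; \|f_{\bar\theta}\|_{L^p(K,\mu)},
\]
where $\mu = \vol_n(K)^{-1}\chi_K\,dx$ is the uniform probability measure on $K$. Since $K$ is compact, $f_{\bar\theta}$ is bounded above (by $\max_i \rho_{DK}(-\theta_i)$), and continuous on $K$ as a minimum of finitely many continuous functions of $x$. The classical fact that $\|g\|_{L^p(\mu)} \to \|g\|_{L^\infty(\mu)}$ as $p \to \infty$ for bounded measurable $g$ on a probability space, together with continuity (which lets us replace the essential supremum by the supremum), gives
\[
\lim_{p \to \infty} \rho_{R_p^\hid K}(\bar\theta) \;=\; \sup_{x \in K} f_{\bar\theta}(x) \;=\; \rho_{D^\hid K}(\bar\theta).
\]

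The only conceptual point to settle is the meaning of $R^\hid_\infty K$: by Jensen's inequality the $L^p$ norms are monotone non-decreasing in $p$, so the limit exists pointwise and defines a continuous radial function, which is the radial function of the star body $R^\hid_\infty K$. Combining the two displays above then identifies $R^\hid_\infty K$ with $D^\hid K$ as star bodies. I do not anticipate any serious obstacle here; the entire argument is essentially a reformulation of definitions plus the $L^p \to L^\infty$ limit.
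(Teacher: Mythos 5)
Your argument is correct and is essentially the paper's own proof: both reduce the statement to the identity $\rho_{R^\hid_\infty K}(\bar\theta)=\sup_{x\in K}\min_{1\leq i\leq \hid}\rho_{K-x}(-\theta_i)$ (which the paper likewise extracts from ``properties of $p$th averages,'' i.e.\ the $L^p\to L^\infty$ limit you spell out) and then match this quantity with $D^\hid(K)$ by unfolding the definition of the higher-order difference body. The only cosmetic difference is that you compute $\rho_{D^\hid(K)}(\bar\theta)$ directly as a supremum over $r$, whereas the paper runs the same computation as a two-way set inclusion at the level set $\{\rho\geq 1\}$.
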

\begin{proof}
Since $R^\hid_\infty K$ is a star body, we know that
$$R^\hid_\infty K=\{\bar y\in\Rnhi:\rho_{R^\hid_\infty K}(\bar y) \geq 1\}.$$
Hence, if $\bar y\in R^\hid_\infty K$, then, $\max_{x\in K}\min_{i}\rho_{K-x}(-y_i) \geq 1$; but this is true if and only if there exists at least one $x\in K$ such that $\min_{i=1,\dots,\hid}\rho_{K-x}(-y_i) \geq 1.$ But, for such an $x,$ this means, for every $i=1,\dots,\hid$, one has $\rho_{K-x}(-y_i) \geq 1.$ However, $K-x$ is a star body, and hence this inequality implies $-y_i\in K-x,$ or, equivalently, $x\in K+y_i.$ Therefore, we have $$x\in K\cap \bigcap_{i=1}^\hid (K+y_i).$$
By definition, this means that $\bar y\in D^\hid(K).$ So, $R_\infty^\hid K \subseteq D^\hid(K).$ Conversely, if we know that $\bar y \in D^\hid (K),$ then there exists some $x\in K\cap \bigcap_{i=1}^\hid (K+y_i).$ This means that $\rho_{K-x}(-y_i) \geq 1$ for all $i=1,\dots,\hid.$ Thus, so, too is their minimum. Taking maximum over $x\in K$ yields $\max_{x\in K}\min_{i}\rho_{K-x}(-y_i) \geq 1,$ i.e. $\bar y \in R^\hid_\infty K,$ and so the claim follows.

For the second claim, we note that, when $p\in (-1,0),$
\[
\int_K \left(\min _{i=1, \ldots, \hid}\left\{\rho_{K-x}(-\theta_i)\right\}\right)^p d x \geq \int_K \rho_{K-x}(-\theta_i)^p d x
\]
for all $i=1,\dots,m$, and the latter integral was shown to go to $\infty$ as $p\to -1$ by Gardner and Zhang \cite{GZ98}.
\end{proof}

\noindent We now show that the radial functions $\rho_{R_p^m K}$ of the $(\hid,p)$ radial mean bodies $R_p^m K$ are actually the Mellin transform of the $\hid$th-covariogram $\Cov$. In the presence of $(-\Cov(r\bar\theta))^\prime$ below, the derivative is in $r$, and, we make note of the fact that, since $\Cov(r\bar\theta)$ is decreasing, $-\Cov(r\bar\theta)^\prime$ is positive.
\begin{proposition}
    Let $\hid,n\in\N$ be fixed. Then, for $K\in\conbod,$ one has that, for $p\neq 0$,
    \begin{equation}
\begin{split}
\rho_{R^\hid_p K}(\bar{\theta})&=\left(p\mathcal{M}_{\frac{\Cov(r\bar\theta)}{\vol_n(K)}}(p)\right)^\frac{1}{p}
\\
&=\begin{cases}
    \left(\frac{p}{\vol_n(K)}\int_0^{\rho_{D^\hid(K)}(\bar\theta)}\Cov(r\bar{\theta})r^{p-1}dr\right)^\frac{1}{p}, & p>0,
    \\
    \left(p\int_{0}^{\rho_{D^\hid (K)}(\bar{\theta})} \left(\frac{\Cov(r\bar{\theta})}{\vol_n(K)}-1\right) r^{p-1} d r +\rho^p_{D^\hid (K)}(\bar{\theta})\right)^\frac{1}{p}, & p\in (-1,0).
\end{cases}
\end{split}
\label{radial_ell}
\end{equation}
From Proposition~\ref{p:radial_ball}, this yields that, for each $p \geq 0$, $R^\hid_{p} K\in\mathcal{K}^{nm}$ and that it contains the origin in its interior. Additionally, the above formula re-writes as, for $p \in (-1,\infty)$:
\begin{equation}\rho_{R^\hid_p K}(\bar{\theta})=\begin{cases}\left(\frac{1}{\vol_n(K)}\int_0^{\rho_{D^\hid (K)}(\bar\theta)}(-\Cov(r\bar\theta))^\prime r^{p}dr\right)^\frac{1}{p}, & p\neq 0,
\\
\exp\left(\frac{1}{\vol_n(K)}\int_0^{\rho_{D^m(K)}(\bar\theta)}(-\Cov(r\bar\theta))^\prime\log(r)dr\right), & p=0.
\end{cases}
\label{eq:beset_radial_form}
\end{equation}
\label{p:other_form}
\end{proposition}
\begin{proof}
Observe that $x\in \cap_{i=1}^\hid(K+r\theta_i),$ if and only if $-r\theta_i\in K-x$ for every $i=1,\dots,\hid.$ But this is equivalent to $0\leq r\leq \rho_{K-x}(-\theta_i).$ Then, for $p>0$:
\begin{align*}
    \int_K&\left(\min_{i=1,\dots,\hid}\{\rho_{K-x}(-\theta_i)\}\right)^pdx =p\int_K\int_0^{\min_{i=1,\dots,\hid}\{\rho_{K-x}(-\theta_i)\}}r^{p-1}drdx
    \\
    &=p\int_{K}\int_0^\infty \chi_{\bigcap_{i=1}^\hid\{r>0:r\leq \rho_{K-x}(-\theta_i)\}}(r)r^{p-1}drdx
    \\
    &=p\int_0^\infty \int_{K}\chi_{\bigcap_{i=1}^\hid\{x\in\Rn:x\in (K+r\theta_i)\}}(x)dx\, r^{p-1}dr
    \\
    &=p\int_0^\infty \Cov(r\bar\theta)r^{p-1}dr=p\int_0^{\rho_{D^\hid(K)}(\bar\theta)} \Cov(r\bar\theta)r^{p-1}dr.
\end{align*}
For $p\in (-1,0),$ we obtain
\begin{align*}
    \int_K&\left(\min_{i=1,\dots,\hid}\{\rho_{K-x}(-\theta_i)\}\right)^pdx =-p\int_K\int_{\min_{i=1,\dots,\hid}\{\rho_{K-x}(-\theta_i)\}}^\infty r^{p-1}drdx
    \\
    &=-p\int_{K}\int_0^\infty \chi_{\bigcup_{i=1}^\hid\{r>0:r> \rho_{K-x}(-\theta_i)\}}(r)r^{p-1}drdx
    \\
    &=-p\int_0^\infty \int_{K}\chi_{\bigcup_{i=1}^\hid\{x\in\Rn:x\notin (K+r\theta_i)\}}(x)dxr^{p-1}dr
     \\
    &=-p\int_0^\infty \vol_n\left(K\setminus \left(\bigcap_{i=1}^\hid (K+r\theta_i)\right)\right)r^{p-1}dr
    \\
    &=-p\int_0^\infty (\vol_n(K)-\Cov(r\bar\theta))r^{p-1}dr
    \\
    &=p\int_0^{\rho_{D^\hid(K)}(\bar\theta)} (\Cov(r\bar\theta)-\vol_n(K))r^{p-1}dr +\vol_n(K)\rho^p_{D^\hid (K)}(\bar{\theta}).
\end{align*}
The equation~\eqref{eq:beset_radial_form} comes from \eqref{eq:mellin_integration} and the differentiability of $\Cov(r\bar\theta)$ almost everywhere (as a function in $r$ on its support) for $p\neq 0$, and then via continuity for $p=0$.
\end{proof}
The convexity for $p \in (-1,0)$ is unknown. On the other-hand, for $p\in (-1,0),$ let $s:=1+p\in (0,1).$
Then, we obtain
\begin{equation}s\rho_{R^\hid_p K}(\bar{\theta})^{s-1}=s\int_0^{\rho_{D^\hid (K)}(\bar\theta)}\frac{(-\Cov(r\bar\theta))^\prime}{\vol_n(K)} r^{s-1}dr.
\label{eq_almost-1}
\end{equation}
We recall the following elementary lemma; see, for example, \cite[Lemma 4]{HL25} for a proof.
\begin{lemma}
\label{l:fractional_deriv}
If $\varphi:[0, \infty) \rightarrow[0, \infty)$ is a measurable function such that, for some $s_0 \in(0,1)$, one has $\int_0^{\infty} t^{s_0-1} \varphi(t) \mathrm{d} t<\infty$, then
$$
\lim _{s \rightarrow 0^{+}}s \int_0^{\infty} t^{s-1} \varphi(t) \mathrm{d} t=\lim _{t \rightarrow 0^{+}} \varphi(t).
$$
\end{lemma}

\noindent Consequently, we can now establish the behavior of $\rho_{R^\hid_p K}$ as $p\to (-1)^+.$ 
\begin{proposition}
    Fix $\hid,n\in\N$ and $K\in\conbod.$ Then, $$\lim_{p\to (-1)^+}(p+1)^{1/p}R^\hid_p K = \vol_n(K) \PP K.$$
\end{proposition}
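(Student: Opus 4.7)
The plan is to start from the explicit formula for $\rho_{R^\hid_p K}(\bar\theta)^p$ displayed in \eqref{eq_almost-1}, substitute $s = -p \in (0,1)$, and then apply the fractional-derivative Lemma~\ref{l:fractional_deriv} to the function
\[\varphi(r) := \frac{(-\Cov(r\bar\theta))'}{\vol_n(K)},\]
extended by zero beyond $r = \rho_{D^\hid(K)}(\bar\theta)$. Once we compute the limit of $(1-s)\rho_{R^\hid_p K}(\bar\theta)^{-s}$ as $s \to 1^-$, the conclusion will follow by a direct algebraic manipulation.

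First I would verify the hypotheses of Lemma~\ref{l:fractional_deriv}. The integrability condition $\int_0^\infty r^{-s_0}\varphi(r)\,dr < \infty$ for some $s_0 \in (0,1)$ is immediate because $\varphi$ has compact support and is bounded near $0$. The main point is to establish that the right-limit $\lim_{r \to 0^+}\varphi(r)$ exists and to identify $\varphi(0)$. This is where the concavity structure enters: by Proposition~\ref{p:facts}(4), the map $r \mapsto \Cov(r\bar\theta)^{1/n}$ is concave on $[0, \rho_{D^\hid(K)}(\bar\theta)]$, so its right-derivative is non-increasing and right-continuous. Consequently $\Cov(r\bar\theta)'$ itself has a right-limit at $r = 0$, which by Theorem~\ref{t:variationalformula} equals $-h_{\P K}(\bar\theta)$. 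Thus we may set
\[\varphi(0) = \lim_{r \to 0^+}\varphi(r) = \frac{h_{\P K}(\bar\theta)}{\vol_n(K)}.\]

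Applying Lemma~\ref{l:fractional_deriv} to equation \eqref{eq_almost-1} then yields, for each fixed $\bar\theta \in \S$,
\[\lim_{s \to 1^-}(1-s)\rho_{R^\hid_p K}(\bar\theta)^{-s} = \frac{h_{\P K}(\bar\theta)}{\vol_n(K)}.\]
Since $p+1 = 1-s$ and $1/p = -1/s$, we can rewrite this as
\[\lim_{p \to (-1)^+}\bigl((p+1)\rho_{R^\hid_p K}(\bar\theta)^p\bigr)^{1/p} = \left(\frac{h_{\P K}(\bar\theta)}{\vol_n(K)}\right)^{-1} = \vol_n(K)\,\rho_{\PP K}(\bar\theta),\]
where the last equality uses that $\PP K = (\P K)^\circ$, so $\rho_{\PP K}(\bar\theta) = 1/h_{\P K}(\bar\theta)$. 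This identifies the radial function of the limit pointwise. To upgrade pointwise convergence of radial functions to the set-convergence claimed in the statement, one can invoke continuity of $h_{\P K}$ on the compact sphere $\S$ together with Dini-type reasoning (the integrand in \eqref{eq_almost-1} is monotone in $s$ along each ray), yielding uniform convergence.

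The main obstacle is the continuity requirement in Lemma~\ref{l:fractional_deriv}, i.e.\ showing $\varphi(r) \to \varphi(0)$ as $r \to 0^+$. This is not automatic from Theorem~\ref{t:variationalformula}, which only provides a one-sided derivative at $r=0$; the concavity of $\Cov(r\bar\theta)^{1/n}$ along rays (and the resulting right-continuity of one-sided derivatives of concave functions) is precisely the structural input that bridges this gap.
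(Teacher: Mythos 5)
Your proposal is correct and follows essentially the same route as the paper: the paper's proof also takes the limit of \eqref{eq_almost-1} via Lemma~\ref{l:fractional_deriv} and identifies $\varphi(0)$ through Theorem~\ref{t:variationalformula}. Your additional verification that the concavity of $\Cov(r\bar\theta)^{1/n}$ guarantees the right-continuity of the derivative at $r=0$ (needed for the hypothesis of Lemma~\ref{l:fractional_deriv}) is a detail the paper leaves implicit, but it does not change the argument.
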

\begin{proof}
Taking the limit of \eqref{eq_almost-1}, using Lemma~\ref{l:fractional_deriv}, and inserting the result for the derivative of the covariogram from Theorem~\ref{t:variationalformula},
we obtain  for every $\bar\theta\in\S$ that
\begin{equation}
\label{eq:-1_body}
\lim_{p\to (-1)^+}(p+1)^{1/p}\rho_{R^\hid_p K}(\bar\theta)=\vol_n(K) \rho_{\PP K}(\bar\theta).\end{equation}
\end{proof}
Thus, we see that the \textit{shape} of $R^{\hid}_p K$ approaches that of $\vol_n(K) \PP K$ as $p\to (-1)^+.$ With the aid of \eqref{eq:beset_radial_form}, one can deduce the following chain of set-inclusions.
\begin{lemma}
    Fix $n,\hid\in\N.$ For every $K\in\conbod$ and $-1<p < q <\infty$ one has:
$$R^\hid_pK \subseteq R^\hid_qK \subseteq D^\hid(K).$$
\end{lemma}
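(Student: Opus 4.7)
The plan is to reduce both inclusions to the classical monotonicity of $p$-means on a probability space. Fix $\bar\theta\in\S$ and set $g_{\bar\theta}(x):=\min_{1\le i\le \hid}\rho_{K-x}(-\theta_i)$, which is positive for a.e.\ $x\in K$ (it can only vanish on $\partial K$, a set of measure zero). The defining formula
\[
\rho_{R^\hid_p K}(\bar\theta)=\left(\frac{1}{\vol_n(K)}\int_K g_{\bar\theta}(x)^p\,dx\right)^{1/p}
\]
then identifies $\rho_{R^\hid_p K}(\bar\theta)$ with the $L^p$-norm of $g_{\bar\theta}$ with respect to the uniform probability measure $d\mu_K = \vol_n(K)^{-1}\chi_K\,dx$ on $K$.

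For the first inclusion $R^\hid_p K\subseteq R^\hid_q K$, I would invoke the classical monotonicity of $p$-means: for any probability measure $\mu$ and any $\mu$-a.e.\ positive measurable $f$, the map $p\mapsto \|f\|_{L^p(\mu)}$ is non-decreasing on $(-\infty,\infty)$, where the value at $p=0$ is defined by continuity as the geometric mean $\exp\!\bigl(\int\log f\,d\mu\bigr)$. This is standard, following from Jensen's inequality applied to $t\mapsto t^{q/p}$ when $p,q$ have the same sign, and by chaining through $p=0$ otherwise. Applying this pointwise in $\bar\theta\in\S$ yields $\rho_{R^\hid_p K}(\bar\theta)\le \rho_{R^\hid_q K}(\bar\theta)$ for every $\bar\theta$, i.e.\ the desired inclusion of star bodies.

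For the second inclusion $R^\hid_q K\subseteq D^\hid(K)$, I would exploit the identity $R^\hid_\infty K=D^\hid(K)$ established just above, which says
\[
\rho_{D^\hid(K)}(\bar\theta)=\operatornamewithlimits{ess\,sup}_{x\in K} g_{\bar\theta}(x)=\|g_{\bar\theta}\|_{L^\infty(\mu_K)}.
\]
The elementary bound $\|g_{\bar\theta}\|_{L^q(\mu_K)}\le \|g_{\bar\theta}\|_{L^\infty(\mu_K)}$ for any $q<\infty$ then gives $\rho_{R^\hid_q K}(\bar\theta)\le \rho_{D^\hid(K)}(\bar\theta)$ pointwise on $\S$, which is the claimed inclusion.

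The only real subtlety lies in the range $p\in(-1,0)$, where $g_{\bar\theta}^p$ is unbounded and it is not \emph{a priori} clear that $\|g_{\bar\theta}\|_{L^p(\mu_K)}$ is finite. This is exactly what the alternative integral representation in Proposition~\ref{p:other_form} secures: the Lipschitz behaviour of the $\hid$-covariogram near the origin forces integrability for all $p>-1$, so the $L^p$-interpretation (and with it, the Jensen-type monotonicity argument) remains legitimate throughout the stated range. This is the main place where a careful check is needed; once integrability is granted, the rest is a direct application of standard $L^p$-inequalities.
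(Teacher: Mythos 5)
Your proof is correct and is essentially the paper's argument: both inclusions reduce to the monotonicity of $p$-means (Jensen's inequality) over a probability space, with the second inclusion following from comparison with the $\infty$-mean. The only difference is that the paper applies Jensen to the representation $\rho_{R^\hid_p K}(\bar\theta)^p=-\vol_n(K)^{-1}\int_0^{\rho_{D^\hid(K)}(\bar\theta)}(\Cov(r\bar\theta))'\,r^p\,dr$, i.e.\ to the pushforward of your uniform measure $\mu_K$ under $g_{\bar\theta}$, which is the same computation and also supplies the integrability for $p\in(-1,0)$ that you rightly flag.
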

\begin{proof}
It suffices to show for every $\bar\theta\in\S$ that $\rho_{R^\hid_pK}(\bar\theta)$ is an increasing function in $p$ for $p>-1.$ Indeed, this follows from Jensen's inequality applied to \eqref{eq:beset_radial_form} with respect to the probability measure $$-\vol_n(K)^{-1}\Cov(r\bar\theta)^\prime\chi_{[0,\rho_{D^{\hid}(K)}(\bar\theta)]}(r).$$
\end{proof}
We now show the reverse of this chain of inclusions, that is, we prove Theorem~\ref{t:set_con}. For the convenience of the reader, we shall restate this theorem below.

\begin{reptheorem}{t:set_con}
Let $K\in\conbod$ and $\hid\in\N.$ Then, for $-1< p< q < \infty$, one has
$$D^\hid (K) \subseteq {\binom{q+n}{n}}^{\frac{1}{q}} R^\hid_{q}K \subseteq {\binom{p+n}{n}}^{\frac{1}{p}} R^\hid_{p} K\subseteq n\vol_n(K)\PP K.$$
Equality occurs in any set inclusion if and only if $K$ is a $n$-dimensional simplex.
\end{reptheorem}
\begin{proof}
Recall from Proposition~\ref{p:facts} that $\Cov(r\bar\theta)^{\frac{1}{n}}$ is a concave function (in $r$) on its support for every fixed $\bar\theta\in\S.$ Thus, from Lemma~\ref{l:mono} and Proposition~\ref{p:other_form},
the function $$G_K(p;\bar{\theta}):={\binom{p+n}{n}}^{\frac{1}{p}}\rho_{R^\hid_p K}(\bar{\theta})$$
is non-increasing in $p,$ $p>-1,$ for every fixed $\bar{\theta}$, which establishes the first three set inclusions upon insertion of definitions. For the last set inclusion, we have not yet established the behavior of $\lim_{p\to (-1)^+}G_K(p;\bar{\theta}).$ Observe that $$G_K(p;\bar{\theta})=\left({\frac{1}{p+1}\binom{p+n}{n}}\right)^{\frac{1}{p}}(p+1)^{1/p}\rho_{R^\hid_p K}(\bar{\theta}).$$
From the fact that
$$\left({\frac{1}{p+1}\binom{p+n}{n}}\right)^{\frac{1}{p}} \xrightarrow[p\to (-1)^+]{} n,$$
one obtains from \eqref{eq:-1_body} that
$\lim_{p\to (-1)^+}G_K(p;\bar{\theta}) = n\vol_n(K) \rho_{\PP K}(\bar\theta).$
The equality conditions follow from those of Lemma~\ref{l:mono}, which shows equality occurs for any set inclusion, and hence for all set inclusions, if and only if $\Cov(r\bar\theta)^{\frac{1}{n}}$ is as affine function in $r$ on its support. Indeed, Proposition~\ref{p:simp} below shows this characterizes a simplex.
\end{proof}
The following proposition characterizes a simplex; the equivalence between $(i)$ and $(ii)$ can be found in \cite[Section 6]{EGK64}, or \cite{Choquet,RS57}, and the equivalence between $(ii)$ and $(iii)$ is the content of \cite[Section 5]{Sch70}.
\begin{proposition}
	\label{p:simp}
	Let $K\in\conbod$ and $\hid\in\mathbb{N}.$ The following are equivalent:
	\begin{enumerate}
        \item[(i).] $K$ is a $n$-dimensional simplex.
	    \item[(ii).] \label{item:hom} For every $\theta\in\s$ and $r>0$ so that $K\cap (K+r\theta)\neq\emptyset$, $K\cap (K+r\theta)$ is homothetic to $K$.
	    \item[(iii).] \label{item:aff} For every $\bar\theta\in\S$, $\Cov(r\bar\theta)^{1/n}$ is an affine function in $r$ for $r\in[0,\rho_{D^\hid (K)}(\bar\theta)]$.
	\end{enumerate}
	\end{proposition}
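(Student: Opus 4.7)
The plan is to split the triple equivalence as (i) $\Leftrightarrow$ (ii) and (ii) $\Leftrightarrow$ (iii). The first equivalence is the classical Choquet--Rogers--Shephard characterization of simplices \cite{Choquet, EGK64, RS57}, which I would simply invoke. The substantive work is in the second equivalence, which I would establish by proving (iii) $\Rightarrow$ (ii) and (i) $\Rightarrow$ (iii), closing the loop via the already-cited (i) $\Leftrightarrow$ (ii).

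For (iii) $\Rightarrow$ (ii), I would specialize the hypothesis to directions $\bar\theta\in\S$ of the form $(0,\ldots,0,\theta,0,\ldots,0)$ with $\theta\in\s$, so that $\Cov(r\bar\theta) = \cov(r\theta) = \vol_n(K\cap(K+r\theta))$. The assumption then says $r\mapsto\cov(r\theta)^{1/n}$ is affine on $[0,\rho_{DK}(\theta)]$. The key observation is the Minkowski-convexity inclusion
\[
\alpha\bigl(K\cap(K+r_1\theta)\bigr) + (1-\alpha)\bigl(K\cap(K+r_2\theta)\bigr) \subseteq K \cap \bigl(K + (\alpha r_1 + (1-\alpha)r_2)\theta\bigr),
\]
which follows from $\alpha K + (1-\alpha)K = K$ and $\alpha(K+r_1\theta) + (1-\alpha)(K+r_2\theta) = K + (\alpha r_1+(1-\alpha)r_2)\theta$. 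Applying \eqref{eq:BM} to this inclusion and combining with the affinity hypothesis forces equality throughout; the equality case of \eqref{eq:BM} then yields that $K\cap(K+r_1\theta)$ and $K\cap(K+r_2\theta)$ are homothetic. Setting $r_1=0$ and $r_2=r$ gives (ii).

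For (i) $\Rightarrow$ (iii), I would use the half-space representation $K = \{x : \langle a_j,x\rangle \le b_j,\ j=1,\ldots,n+1\}$ of the simplex. Since every facet normal of each translate $K+r\theta_i$ is parallel to one of $K$, the body $K_r := K\cap\bigcap_{i=1}^{\hid}(K+r\theta_i)$ is cut out by exactly $n+1$ half-spaces, namely
\[
\langle a_j, x\rangle \le b_j - r \max_{1\le i\le\hid}\langle a_j,\theta_i\rangle_{+},\quad j=1,\ldots,n+1.
\]
Hence $K_r$ remains a simplex homothetic to $K$ whenever nonempty, with a homothety ratio affine in $r$; consequently $\Cov(r\bar\theta)^{1/n}$ is affine on $[0,\rho_{\D(K)}(\bar\theta)]$.

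The main obstacle is ensuring that the Brunn-Minkowski equality case is applied correctly in (iii) $\Rightarrow$ (ii): the affinity of $\cov(r\theta)^{1/n}$ must be used to force equality \emph{both} in the set inclusion and in \eqref{eq:BM}, from which homothety of the intersections is extracted via the strict $1/n$-concavity of volume on non-homothetic convex bodies. The reverse direction (i) $\Rightarrow$ (iii) is structurally simple once one recognizes that the simplex's facet normals persist through intersection with translates, but this recognition is exactly what fails for non-simplicial $K$, which is why (iii) characterizes simplices and not a broader class.
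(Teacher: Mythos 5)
Your proposal is correct and matches the paper's (very terse) treatment: the paper likewise disposes of (i)$\Leftrightarrow$(ii) by citing \cite{Choquet,EGK64,RS57} and attributes (ii)$\Leftrightarrow$(iii) to the equality case of the Brunn--Minkowski inequality, which is exactly the mechanism you implement via the Minkowski-convexity inclusion, with the direct half-space computation supplying (i)$\Rightarrow$(iii). One cosmetic slip: under the paper's convention $t_-=\max\{0,-t\}$, the half-space bound for $K_r$ should read $b_j - r\max_{1\le i\le\hid}\langle a_j,\theta_i\rangle_{-}$ rather than $\langle a_j,\theta_i\rangle_{+}$; this does not affect the argument.
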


We next show that there exists an $(\hid,p)$ radial mean body whose measure is comparable to that of $K.$
\begin{proposition}
\label{radial_com}
    Let $K\in\conbod$ and $\hid\in\mathbb{N}.$ Then,
    $$\vol_{n\hid}(R^\hid_{n\hid} K)=\vol_n(K)^{\hid}.$$
\end{proposition}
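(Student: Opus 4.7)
The plan is to invoke the polar-coordinate formula for volume twice: once in the ambient space $\R^{n\hid}$ to convert $\vol_{n\hid}(R^\hid_{n\hid}K)$ into a spherical integral, and then, after swapping the order of integration by Fubini, once more to recognize the inner integral as the volume of a product body.

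First, by the polar formula \eqref{eq:polar_2} applied in $\R^{n\hid}$,
\[
\vol_{n\hid}(R^\hid_{n\hid}K) = \frac{1}{n\hid}\int_{\S}\rho_{R^\hid_{n\hid}K}(\bar\theta)^{n\hid}\,d\bar\theta.
\]
Substituting the definition of $\rho_{R^\hid_{n\hid}K}(\bar\theta)^{n\hid}$ and swapping the order of integration via Fubini (which is justified by nonnegativity and boundedness of all quantities) reduces the computation to evaluating
\[
I(x) := \int_{\S}\Bigl(\min_{1\le i\le \hid}\rho_{K-x}(-\theta_i)\Bigr)^{n\hid} d\bar\theta
\]
for almost every $x \in K$. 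Since $\S$ is invariant under $\bar\theta\mapsto -\bar\theta$, we may as well drop the minus sign inside the radial function.

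The key observation is that for any star body $L\subset\R^n$, the $\hid$-fold product $L^\hid := L\times\cdots\times L \subset \R^{n\hid}$ is itself a star body in $\R^{n\hid}$ (it is compact and star-shaped about the origin whenever $o \in \operatorname{int} L$, which holds for $L = K-x$ for a.e.\ $x \in K$), and a direct verification gives
\[
\rho_{L^\hid}(\bar\theta) = \sup\{\lambda >0 : \lambda\theta_i \in L \text{ for all } i\} = \min_{1\le i\le\hid}\rho_L(\theta_i), \qquad \bar\theta\in\S.
\]
Hence, applying \eqref{eq:polar_2} to $L^\hid$ with $L = K-x$,
\[
\int_{\S}\Bigl(\min_{1\le i\le \hid}\rho_{K-x}(\theta_i)\Bigr)^{n\hid}d\bar\theta = n\hid\cdot \vol_{n\hid}\bigl((K-x)^\hid\bigr) = n\hid\cdot \vol_n(K-x)^\hid = n\hid\cdot\vol_n(K)^\hid,
\]
the last equality from translation invariance of Lebesgue measure. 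Therefore $I(x) = n\hid\cdot\vol_n(K)^\hid$ independently of $x$, and plugging back gives
\[
\vol_{n\hid}(R^\hid_{n\hid}K) = \frac{1}{n\hid\cdot\vol_n(K)}\int_K n\hid\cdot\vol_n(K)^\hid\,dx = \vol_n(K)^\hid,
\]
which is the claim. There is no genuine obstacle: the only conceptual point is the identification of $\min_i \rho_{L}(\theta_i)$ with $\rho_{L^\hid}(\bar\theta)$, which is precisely why the exponent $n\hid$ (matching the ambient dimension of the product body) makes the computation close.
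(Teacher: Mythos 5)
Your proof is correct. It reaches the same identity by a genuinely cleaner route than the paper's: you work straight from the defining formula \eqref{pth}-style expression for $\rho_{R^\hid_{n\hid}K}$, apply polar coordinates in $\R^{n\hid}$, swap the order of integration by Tonelli, and then recognize the inner spherical integral as $n\hid\,\vol_{n\hid}\bigl((K-x)^\hid\bigr)$ via the observation that $\min_{i}\rho_L(\theta_i)=\rho_{L^\hid}(\bar\theta)$ for the product body $L^\hid=L\times\cdots\times L$. The paper instead starts from $\vol_n(K)^\hid$, writes it as $\vol_n(K)^{-1}\int_{\R^{n\hid}}\Cov(\bar x)\,d\bar x$ by a Fubini argument on products of indicator functions, and then converts to polar coordinates using the Mellin/covariogram representation of the radial function from Proposition~\ref{p:other_form}. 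The two arguments are computationally equivalent (both ultimately rest on the fact that $\{\bar x : y\in \bigcap_i(x_i+K)\}=(y-K)^\hid$ has volume $\vol_n(K)^\hid$), but yours is more self-contained, since it bypasses the covariogram and Proposition~\ref{p:other_form} entirely; the paper's version has the advantage of exhibiting $\vol_{n\hid}(R^\hid_{n\hid}K)$ as the total mass of the $\hid$-covariogram, which fits the surrounding narrative. The small technical points you flag --- handling the sign via invariance of $d\bar\theta$ under $\bar\theta\mapsto-\bar\theta$ (or, just as well, via $\min_i\rho_{K-x}(-\theta_i)=\rho_{(x-K)^\hid}(\bar\theta)$ and $\vol_n(x-K)=\vol_n(K)$), and restricting to a.e.\ $x\in K$ so that $o\in\operatorname{int}(K-x)$ --- are handled correctly.
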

\begin{proof}
We first observe that, from the translation invariance of the Lebesgue measure:
\begin{align*}
    \vol_n(K)^\hid&=\frac{1}{\vol_n(K)}\int_K\prod_{i=1}^\hid\vol_{n}(y-K)dy
    \\
    &=\frac{1}{\vol_n(K)}\int_K\prod_{i=1}^\hid\left(\int_{\R^n}\chi_{y-K}(x_i)dx_i\right)dy
    \\
    &=\frac{1}{\vol_n(K)}\int_{\R^n}\cdots\int_{\R^n}\int_K\chi_{\cap_{i=1}^\hid (x_i+K)}(y)dyd{x_1}\cdots d{x_\hid}
    \\
    &=\frac{1}{\vol_n(K)}\int_{\R^{n\hid}}\Cov(\bar x)d\bar{x} =\frac{1}{\vol_n(K)}\int_{D^\hid(K)}\Cov(\bar x)d\bar x.
    \end{align*}

On the other-hand, integrating in polar coordinates \eqref{eq:polar} and using the polar formula for volume \eqref{eq:polar_2} yields
\begin{align*}\frac{1}{\vol_n(K)}&\int_{D^\hid(K)}\Cov(\bar x)d\bar x
\\
&=\frac{1}{\vol_n(K)}\int_{\S}\!\int_0^{\rho_{D^\hid(K)}(\bar\theta)}\!\Cov(r\bar \theta)r^{n\hid-1}drd\bar\theta
\\
&=\frac{1}{n\hid}\int_{\S}\rho_{R^\hid_{n\hid} K}(\bar\theta)^{n\hid}d\bar\theta=\vol_{n\hid}(R^\hid_{n\hid} K).
\end{align*}
\end{proof}
From this we can prove the following results; the first one is a new proof of \eqref{eq:RSell}.

\begin{corollary}[Rogers-Shephard inequality for mth-order difference bodies]\label{cor:RSInequalityVol}
Fix $\hid \in \N$ and $K\in\conbod.$ Then, one has 
\[
    \vol_n(K)^{-\hid}\vol_{n\hid}\left(D^\hid (K)\right)\leq \binom{n\hid +n}{n},
\] 
with equality if and only if $K$ is a $n$-dimensional simplex.
\end{corollary}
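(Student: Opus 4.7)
The plan is to combine Theorem~\ref{t:set_con} with Proposition~\ref{radial_com} in a single stroke. Specifically, I would specialize the chain of inclusions in Theorem~\ref{t:set_con} to the value $q = n\hid$: this yields the set inclusion
\[
D^\hid(K) \subseteq \binom{n\hid + n}{n}^{1/(n\hid)} R^\hid_{n\hid} K.
\]
Taking $(n\hid)$-dimensional volume on both sides and using homogeneity of volume, we obtain
\[
\vol_{n\hid}(D^\hid(K)) \leq \binom{n\hid+n}{n} \vol_{n\hid}(R^\hid_{n\hid} K).
\]
Then I would invoke Proposition~\ref{radial_com}, which tells us precisely that $\vol_{n\hid}(R^\hid_{n\hid} K) = \vol_n(K)^\hid$. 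Substituting this identity into the previous line and rearranging gives exactly the claimed inequality.

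For the equality characterization, the equality clause of Theorem~\ref{t:set_con} states that equality in \emph{any} of the set inclusions holds if and only if $K$ is an $n$-dimensional simplex. Since an equality of volumes here forces the first inclusion $D^\hid(K) \subseteq \binom{n\hid+n}{n}^{1/(n\hid)} R^\hid_{n\hid} K$ to be an equality of sets (both are star-shaped with the origin in their interior, and the larger one has the volume equal to that of the smaller dilate), we conclude that equality in the Rogers--Shephard bound forces $K$ to be a simplex. Conversely, if $K$ is a simplex, Theorem~\ref{t:set_con} guarantees the chain collapses and equality holds.

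I do not anticipate any real obstacle: the entire machinery has already been built in the preceding sections. The only small subtlety is making the volume-equality-implies-set-equality step rigorous, but this is immediate because $R^\hid_{n\hid} K$ is a star body containing the origin and $D^\hid(K)$ is contained in a scalar multiple of it, so equal volumes force their radial functions to agree almost everywhere on $\S$, hence everywhere by continuity.
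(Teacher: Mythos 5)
Your proposal is correct and follows essentially the same route as the paper: the paper proves Corollaries~\ref{cor:RSInequalityVol} and~\ref{cor:zhanginequality} simultaneously by specializing Theorem~\ref{t:set_con} to $p=n\hid$, taking volumes, and invoking Proposition~\ref{radial_com}, exactly as you do. Your remark on why volume equality upgrades to set equality (equal volumes of nested star bodies force equal radial functions) is a correct elaboration of the equality case, which the paper simply declares immediate.
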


\noindent The second one is the $m$th-order Zhang's projection inequality. We restate it here for convenience.

\begin{repcorollary}{cor:zhanginequality}[Zhang's projection inequality for mth-order projection bodies]
Fix $\hid \in \N$ and $K\in\conbod.$ Then, one has 
\[
\vol_n(K)^{n\hid-\hid}\vol_{n\hid}\left( \PP K \right) \geq \frac{1}{n^{n\hid}}\binom{n\hid+n}{n},
\] 
with equality if and only if $K$ is a $n$-dimensional simplex.
\end{repcorollary}
\begin{proof}[Proofs of Corollaries \ref{cor:zhanginequality} and \ref{cor:RSInequalityVol}]
    From the homogeneity of volume, Theorem \ref{t:set_con} with $p=n\hid,$ and Proposition~\ref{radial_com}, one has
    \begin{align*}\vol_{n\hid}&(D^\hid (K)) \leq \vol_{n\hid}\left({\binom{n\hid+n}{n}}^{\frac{1}{n\hid}} R^\hid_{n\hid}K\right)
    \\
    &=\binom{n\hid+n}{n}\vol_{n\hid}(R^\hid_p K) =\binom{n\hid+n}{n}\vol_n(K)^{\hid}
    \\
    &\leq \vol_{n\hid}\left(n\vol_n(K)\PP K\right)
    \\
    &=n^{n\hid}\vol_n(K)^{n\hid}\vol_{n\hid}(\PP K).\end{align*}
    The equality conditions are immediate.
\end{proof}

\section{The mth-Order Petty's Projection Inequality}
\label{sec:petty}
In this section, we set out to prove Theorem~\ref{t:pettyprojectioninequality}, which we restate for the reader's convenience here.

\begin{reptheorem}{t:pettyprojectioninequality}
Fix any $\hid \in \N$ and $K \in \conbod$. Then, one has 
\[
\vol_n(K)^{n\hid-\hid}\vol_{n\hid}(\PP K) \leq \vol_{n}(\B)^{n\hid-\hid}\vol_{n\hid}(\PP \B),
\]
with equality if and only if $K$ is an ellipsoid.
\end{reptheorem}
Before proceeding to the proof of Theorem~\ref{t:pettyprojectioninequality}, we establish the following class-reduction argument in the spirit of \cite{LYZ00}.

\begin{lemma}
	\label{l:classreduction}
	Fix $\hid,n\in\N$ and let $K \in \conbod$. Then,
	\begin{equation}
            \begin{split}
                \vol_{n\hid}(\PP K)&\vol_{n}(K)^{n\hid-\hid} \leq 
                \\
                &\vol_{n\hid}(\PP \G \PP K) \vol_{n}(\G \PP K)^{n\hid-\hid}, 
            \end{split}
                \label{eq_duality_4}
	\end{equation}
	with equality if and only if $K$ is homothetic to $\G\PP K$. 
\end{lemma}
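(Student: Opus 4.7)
The strategy, in the spirit of the LYZ class-reduction in \cite{LYZ00}, is to apply the duality of Lemma~\ref{lem_duality} in two different ways and then combine the resulting identities with Minkowski's first inequality \eqref{eq:min_first} and its dual \eqref{dual_Min_first}; the leading constants that appear will cancel exactly. Writing $K^{*} := \G\PP K$ for brevity, I would first apply Lemma~\ref{lem_duality} with $L = \PP K$ and $K$ unchanged. The left-hand side collapses, since $\widetilde V_{-1}$ on the diagonal is just the volume:
\[ \widetilde V_{-1}(\PP K[n\hid+1], \PP K) = \frac{1}{n\hid}\int_{\S}\rho_{\PP K}(\bar\theta)^{n\hid}\,d\bar\theta = \vol_{n\hid}(\PP K), \]
so after cancelling $\vol_{n\hid}(\PP K)$ one obtains the normalization
\[ V(K[n-1], K^{*}) = \frac{\hid}{n\hid+1}. \]
Plugging this into Minkowski's first inequality \eqref{eq:min_first} yields $\vol_n(K)^{n-1}\vol_n(K^{*}) \leq (\hid/(n\hid+1))^{n}$, with equality if and only if $K$ and $K^{*}$ are homothetic.

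Next I would apply Lemma~\ref{lem_duality} again with the same $L = \PP K$ but with $K$ replaced by $K^{*}$; since $V(K^{*}[n-1],K^{*}) = \vol_n(K^{*})$ this gives the linking identity
\[ \widetilde V_{-1}(\PP K[n\hid+1],\PP K^{*}) = \frac{n\hid+1}{\hid}\,\vol_{n\hid}(\PP K)\,\vol_n(K^{*}). \]
The dual Minkowski first inequality \eqref{dual_Min_first}, applied to the $n\hid$-dimensional bodies $\PP K$ and $\PP K^{*}$, then produces
\[ \vol_{n\hid}(\PP K) \leq \left(\frac{n\hid+1}{\hid}\right)^{n\hid}\vol_n(K^{*})^{n\hid}\vol_{n\hid}(\PP K^{*}), \]
with equality if and only if $\PP K$ and $\PP K^{*}$ are dilates. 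Raising the earlier bound $\vol_n(K)^{n-1}\vol_n(K^{*}) \leq (\hid/(n\hid+1))^{n}$ to the $\hid$th power and multiplying it by this last display, the factors $(\hid/(n\hid+1))^{n\hid}$ and its reciprocal cancel, and using $(n-1)\hid = n\hid - \hid$ one arrives precisely at \eqref{eq_duality_4}.

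The only delicate point, rather than a true obstacle, is bookkeeping: Minkowski's first inequality and its dual push the two identities in opposite directions, and it is exactly the choice $L = \PP K$ in both applications that makes the leading constants reciprocal so they annihilate. For the equality statement: equality in Minkowski's first inequality already forces $K$ and $K^{*}$ to be homothetic; conversely, if $K^{*} = \lambda K + v$ for some $\lambda > 0$ and $v \in \Rn$, then by the translation invariance of $\P$ and Proposition~\ref{prop:linear:transformations} we have $\PP K^{*} = \lambda^{-(n-1)}\PP K$, a dilate of $\PP K$, so equality also holds in the dual Minkowski first inequality. Thus equality in \eqref{eq_duality_4} holds if and only if $K$ is homothetic to $\G\PP K$, as asserted.
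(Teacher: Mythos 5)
Your proposal is correct and follows essentially the same route as the paper: both apply Lemma~\ref{lem_duality} with $L=\PP K$ twice (once collapsing the diagonal dual mixed volume to get $V(K[n-1],\G\PP K)=\hid/(n\hid+1)$ for Minkowski's first inequality, once to feed the dual Minkowski inequality for $\PP K$ and $\PP\G\PP K$), and the constants cancel exactly as you describe. The equality analysis, including the observation that homothety of $K$ and $\G\PP K$ already forces the dilation condition on the polar projection bodies via translation invariance and homogeneity of $\PP$, also matches the paper's argument.
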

\begin{proof}
    By setting $K=\G L$ in Lemma \ref{lem_duality} and using the dual Minkowski's first inequality \eqref{dual_Min_first}, we obtain for every $L\in\sta$ that:
\begin{equation}
        \label{eq_duality_2}
        \vol_n(\G L)^{n\hid} \geq \left(\frac{n\hid+1}{\hid}\right)^{-n\hid} \vol_{n\hid}(L) \vol_{n\hid}(\PP \G L)^{-1}
\end{equation}
    with equality if and only if $L$ is a dilate of $\PP \G L$. Then, set $L = \PP K$ in \eqref{eq_duality_2} to obtain for every $K\in\conbod$ that
        \begin{equation}
        \begin{split}
                \vol_{n}(\G \PP K)^{n\hid}&\vol_{n\hid}(\PP \G \PP K) 
                \\
                &\geq \left(\frac{n\hid+1}{\hid}\right)^{-n\hid}\vol_{n\hid}(\PP{K}),
        \end{split}
                \label{eq_duality_3}
        \end{equation}
        with equality if and only if $\PP K$ is a dilate of $\PP \G \PP K$. Next, let $L=\PP K$ in Lemma~\ref{lem_duality} and apply Minkowski's first inequality \eqref{eq:min_first} to obtain for every $K\in\conbod$ that
\begin{equation}
        \label{eq_duality_1}
        \left(\frac{n\hid+1}{\hid}\right)^{-n} \geq \vol_{n}(\G \PP K) \vol_{n}(K)^{n-1},
\end{equation}
    with equality if and only if $K$ is homothetic to $\G \PP K$. Raising both sides of \eqref{eq_duality_1} to the $\hid$th-power and combining with \eqref{eq_duality_3}, we then obtain \eqref{eq_duality_4} with equality if and only if $K$ is homothetic to $\G\PP K$ (from \eqref{eq_duality_1}) and $\PP K$ is a dilate of $\PP \G \PP K$ (from \eqref{eq_duality_3}).
	Notice the first condition implies the second, as the operator $\PP$ is translation invariant. Thus, it suffices to say there is equality if and only if $K$ is homothetic to $\G\PP K$. 
\end{proof}

In 1999, McMullen \cite{McM99} introduced the fiber combination of convex bodies. In 2016, Bianchi, Gardner and Gronchi \cite{BGG17} further generalized the concept of fiber combination and constructed a general framework of symmetrizations of convex bodies. We now introduce a symmetrization that is adapted to our setting, i.e. which uses the product structure of $\R^{nm}$. It is actually a particular fiber summation of certain bodies. The definition as a union also emphasizes that the symmetrization is a particular case of \cite[Equation 6]{BGG17}. As far as we know, this is the first application of this particular case. For lack of a better phrase, we will call this fiber symmetrization. It will be convenient to write $\bar x = (x_1,\dots,x_\hid)\in\Rnhi$ as $(x_i)_i.$

\begin{definition}\label{d:multidimSteinerSymmetrization}

Fix $\hid,n \in \N$. For $\xi\in\s$, consider the line $\langle \xi \rangle:=\{t\xi: t \in \R\} \subseteq \R^n$ and let $V=V(\xi)$ be its orthogonal complement.
Let $L \subseteq \Rnhi$ be a compact, convex set.
Consider the subspace $V^\hid = V \times \cdots \times V \subseteq \Rnhi$ and define similarly $\langle \xi \rangle^\hid$. We define the fiber symmetral of $L$ with respect to $\xi$ by
\[
\bar S_\xi L
= \bigcup_{\bar x \in V^\hid} \left( \bar x + \frac 12 D(L \cap (\bar x+ \langle \xi \rangle^\hid)) \right).
\]
We use the convention that the Minkowski sum of a point and the emptyset is the emptyset. Notice that the set $D(L \cap (\bar x+ \langle \xi \rangle^\hid))$ is an $\hid$-dimensional convex body inside the vector space $\langle \xi \rangle^\hid$. Furthermore, we have
\begin{equation}
\label{eq:sym_def}
\begin{split}
\bar S_\xi L=
\bigg\{\left(x_i+\frac 12(t_i-s_i) \xi\right)_i&\! \in \Rnhi \colon x_i \in V,
\\
&t_i,s_i \in \R, (x_i + t_i \xi)_i,(x_i + s_i \xi)_i \in L \bigg\}.
\end{split}
\end{equation}
\end{definition}
It is easy to see that if $L\in\Conbod$, i.e. if $L$ has non-empty interior, then so too is $\bar S_{\xi} L$. This can be verified directly or found in \cite{BGG17,McM99}. In the case when $\hid =1$, the set $L \cap (x+\langle \xi \rangle^\hid)$ is an interval, and half its difference body is the centered interval of the same length, parallel to $\langle \xi \rangle$. Consequently, the above definition reduces to the classical Steiner symmetrization of compact, convex sets. It is straightforward to verify that
\begin{equation}
    \label{eq_ellsym_invariance}
    \overline T(\bar S_{\xi} L) = \bar S_{T \xi} \overline T (L)
\end{equation}
for every rotation $T \in \mathcal{O}(n).$

A basic property of Steiner symmetrization is that it preserves volume. In \cite{UJ23}, Ulivelli studied the analogue of this property for the symmetrizations introduced by Bianchi, Gardner and Gronchi \cite{BGG17}. 
In the following proposition, we provide a self-contained proof of a particular case of \cite[Lemma 3.1]{UJ23}.
\begin{proposition}
\label{p:vol_increase}
Let $n,m \in \mathbb{N}$ and fix $\xi \in \s$. In the notation of Definition \ref{d:multidimSteinerSymmetrization}, we have
that, for $L\in\Conbod$, 
\begin{equation}
        \label{eq:steiner_vol_increase}
	    \vol_{n\hid}(\bar S_\xi L) \geq \vol_{n\hid}(L).
	\end{equation}

Equality holds if and only if the fibers $L \cap\left(\bar{x}+\langle\xi\rangle^{m}\right)$ with positive $m$-dimensional volume are symmetric for every $\bar{x} \in V^{m}$ (the center of symmetry may depend on $\bar{x}$ ).
\end{proposition}

\begin{proof}

By the Brunn-Minkowski inequality \eqref{eq:BM},
\[
\vol_{\hid}\left(\frac{1}{2} D(L \cap (\bar x+ \langle \xi \rangle^\hid ))\right) \geq \vol_{\hid}( L \cap (\bar x+ \langle \xi \rangle^\hid)).
\]
Applying Fubini's theorem, we obtain
\begin{align*}
    \vol_{n\hid}(\bar S_{\xi} L)
    &= \int_{V^\hid} \vol_{\hid}\left(\bar x+ \frac{1}{2} D(L \cap (\bar x+\langle \xi \rangle^\hid))\right) d\bar x \\
    &= \int_{V^\hid} \vol_{\hid}\left(\frac{1}{2} D(L \cap (\bar x+\langle \xi \rangle^\hid))\right) d\bar x \\
    &\geq \int_{V^\hid} \vol_{\hid}(L \cap (\bar x+\langle \xi \rangle^\hid)) d\bar x = \vol_{n\hid}(L),
\end{align*}
as required.
\end{proof}

\begin{remark}
As pointed out by the referee, it makes sense to apply fiber symmetrization from Definition~\ref{d:multidimSteinerSymmetrization} to any compact set $L\subset \Rnhi$. However, when $m=1$, one does not necessarily recover the Steiner symmetrization of $L\subset \R^n$. The inequality \eqref{eq:steiner_vol_increase} in Proposition~\ref{p:vol_increase} still holds, but the equality conditions become more complicated due to the more intricate nature of the equality conditions of the Brunn-Minkowski inequality (see e.g. \cite[Page 363]{gardner_book}): there is equality if and only if whenever $\vol_m(L\cap (\bar x +\langle \xi \rangle^m))>0$, the set $L\cap (\bar x +\langle \xi \rangle^m)$ is a convex set with a center of symmetry, from which subsets of $m$-dimensional volume zero may have been removed.
\end{remark}

In order to characterize the equality case of Theorem~\ref{t:pettyprojectioninequality}, we need the following lemma.
\begin{lemma}
    \label{l:permutationofvariables}
    Let $\Delta:\Rn  \to \Rnhi$ be the diagonal function given by $\Delta(x) = (x,\dots, x)$. Then, for $\xi\in\s$ and $K\in\conbod$:
    \[\Delta^{-1} (\bar S_\xi \PP K) \subseteq S_\xi \Pi^\circ K,\]
    where $\Delta^{-1}$ is the pre-image of $\Delta.$
    \end{lemma}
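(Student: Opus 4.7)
The plan is to unpack both Steiner-symmetrization definitions and reduce the inclusion to the convexity of $\Pi^\circ K$, leveraging a coordinate-wise monotonicity of the support function $h_{\P K}$. Fix $x \in \Delta^{-1}(\bar S_\xi \PP K)$ and decompose $x = x_V + x_\xi \xi$ with $x_V \in V := \xi^\perp$, so that $\Delta(x) = \Delta(x_V) + x_\xi \Delta(\xi)$, where $\Delta(x_V) \in V^\hid$ and $\Delta(\xi) \in \langle \xi \rangle^\hid$. Applying the explicit description \eqref{eq:sym_def} of $\bar S_\xi$ to $\PP K$, the membership $\Delta(x) \in \bar S_\xi \PP K$ yields $y_i \in V$ and $t_i, s_i \in \R$ with $y_i + \tfrac12(t_i - s_i)\xi = x$ for every $i$, together with $(y_i + t_i \xi)_i, (y_i + s_i \xi)_i \in \PP K$. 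The first condition forces $y_i = x_V$ and $t_i - s_i = 2 x_\xi$ for every $i$.

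The key observation is the following coordinate monotonicity: since $\max_{1 \leq j \leq \hid} \langle \theta_j, u\rangle_- \geq \langle \theta_i, u\rangle_-$ pointwise for each fixed $i$, integrating against the non-negative measure $\sigma_K$ gives
\[
h_{\P K}(\bar\theta) = \int_{\s} \max_{1\le j \le \hid} \langle \theta_j, u\rangle_- \,d\sigma_K(u) \geq \int_{\s} \langle \theta_i, u\rangle_- \,d\sigma_K(u) = h_{\Pi K}(\theta_i)
\]
for every index $i$. Applying this to $\bar\theta = (x_V + t_i \xi)_i \in \PP K$ and $\bar\theta = (x_V + s_i \xi)_i \in \PP K$, I conclude that each of the $2\hid$ individual points $x_V + t_i \xi$ and $x_V + s_i \xi$ lies in $\Pi^\circ K$.

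To finish, the convexity of $\Pi^\circ K$ forces $I := \{r \in \R : x_V + r\xi \in \Pi^\circ K\}$ to be an interval of $\R$. Since it contains both $t_i$ and $s_i$, its length is at least $|t_i - s_i| = 2|x_\xi|$. By the classical definition of Steiner symmetrization about $\xi^\perp$, this precisely means $x = x_V + x_\xi \xi \in S_\xi \Pi^\circ K$, which is the desired inclusion.

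The only real subtlety is recognizing that the higher-order symmetrization $\bar S_\xi$, though operating on $\hid$-dimensional fibers inside $\Rnhi$, is controlled along the diagonal entirely by one-dimensional chords of $\Pi^\circ K$; the monotonicity inequality $h_{\P K}(\bar\theta) \geq h_{\Pi K}(\theta_i)$ is what collapses the high-dimensional data back to membership in $\Pi^\circ K$, after which convexity does everything. I do not expect any serious technical obstacle beyond bookkeeping.
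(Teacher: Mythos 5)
Your proof is correct, and it takes a mildly but genuinely different route from the paper's. The paper introduces the averaging map $A(\bar x) = \tfrac{1}{\hid}\sum_{j} x_j$ (a left inverse of $\Delta$) and proves the inclusion $A(\PP K) \subseteq \pp K$ by combining the permutation-invariance of $\P K$ with the sublinearity of its support function; it then applies $A$ to the two witnesses $(x+t_i\xi)_i, (x+s_i\xi)_i \in \PP K$ to produce $x+t\xi,\, x+s\xi \in \pp K$ with $t-s = \tfrac{1}{\hid}\sum_i (t_i - s_i) = 2r$, and concludes by the chord characterization of the Steiner symmetral. You instead use the single-coordinate projection: the pointwise bound $\max_j \langle \theta_j, u\rangle_- \geq \langle \theta_i, u\rangle_-$ gives $h_{\P K}(\bar\theta) \geq h_{\Pi K}(\theta_i)$ (an inequality already recorded in the paper's introduction), hence each coordinate of a point of $\PP K$ lies in $\pp K$; since the fiber constraints force $t_i - s_i = 2x_\xi$ for \emph{every} $i$, a single index already supplies two points of $\pp K$ on the line $x_V + \R\xi$ at the right separation, and the same chord argument finishes. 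Your version is slightly more elementary — it needs only monotonicity of the maximum rather than permutation-invariance plus convexity of $h_{\P K}$ — while the paper's averaging map makes the identity $x + r\xi = x + A(\Delta(r\xi))$ transparent; both are complete and rigorous.
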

\begin{proof}
Notice that $\Delta^{-1}(\PP K) = \Pi^\circ K.$ Consider the left-inverse of $\Delta$,
\[A(\bar x) = \frac 1n \sum_{j=1}^\hid x_j, \quad \text{satisfying} \quad \Delta A(\bar x) = \frac 1{\hid!} \sum_{\sigma\in P_\hid} \sigma(\bar x),\]
where the second sum runs over $P_\hid,$ the collection of all permutations in $\hid$-coordinates. Since $\P  K$ is invariant under permutations in $P_\hid$, we have
\begin{align}
    h_{\P  K}(\bar\theta)
    &= \frac 1{\hid!} \sum_{\sigma\in P_\hid} h_{\P  K}( \sigma(\bar\theta) ) \geq h_{\P  K}\left( \frac 1{\hid!} \sum_{\sigma\in P_\hid} \sigma(\bar\theta) \right) \\
&= h_{\P K}( \Delta(A(\bar\theta)))= h_{\Pi K}(A(\bar\theta)),\\
\end{align}
which implies that
\[A(\PP K) \subseteq \Pi^\circ K.\]

To prove the lemma, take $x+r\xi \in \Delta^{-1}(\bar S_\xi \PP K )$ with $x \perp \xi$.
We have $x+r\xi = (x+\frac 12(t_i-s_i) \xi)$ for $i = 1, \ldots, \hid$ with
$(x+t_i\xi)_i, (x+s_i\xi)_i \in \PP K$
and $t_i-s_i = 2r$. Also notice that $A((x+t_i\xi)_i) = x + t \xi$ with $t = \frac 1n \sum t_i$, and $A((x+s_i\xi)_i) = x + s \xi$ with $s = \frac 1n \sum s_i$. Since $(x+t\xi),(x+s\xi) \in A(\PP K) \subseteq \Pi^\circ K$ and
\begin{align}
(x+r\xi)
&= x+ A(\Delta(r \xi)) \\
&= x + A\left((\frac 12(t_i-s_i)\xi)_i\right) \\
&= x + \frac 12 t \xi - \frac 12 s \xi
\end{align}
we obtain $x+r \xi = x+ \frac 12(t-s)\xi \in S_\xi \pp K$ and the lemma follows.
\end{proof}

The critical ingredient in the proof of Theorem~\ref{t:pettyprojectioninequality} is the next lemma, which is a $\hid$th-order variant of \cite[Lemma~14]{LYZ00}.

\begin{lemma}
	\label{l:multidimSteiner}
	Fix $\xi \in \s$.
	Given $K \in \conbod$ with $C^1$ smooth boundary, one has
	\[\bar S_\xi \PP K \subseteq \PP S_{\xi} K.\]

	\noindent If there is equality, then $S_{\xi} \pp K \supseteq \pp S_\xi K$.
\end{lemma}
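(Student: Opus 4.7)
The plan is to reduce the inclusion to a pointwise inequality between the integrands defining the support functions of $\P K$ and $\P S_\xi K$. Given $\bar x = (x_i + \tfrac12(t_i-s_i)\xi)_i \in \bar S_\xi \PP K$ as in \eqref{eq:sym_def}, with $x_i \in V := \xi^\perp$ and $\bar x^{+} := (x_i+t_i\xi)_i,\ \bar x^{-}:=(x_i+s_i\xi)_i\in \PP K$, I would aim to prove the key inequality
\[
h_{\P S_\xi K}(\bar x) \leq \tfrac12\bigl(h_{\P K}(\bar x^+)+h_{\P K}(\bar x^-)\bigr).
\]
Since $\PP L = \{\bar y : h_{\P L}(\bar y) \leq 1\}$, the hypothesis $h_{\P K}(\bar x^\pm) \leq 1$ would then force $h_{\P S_\xi K}(\bar x)\leq 1$, i.e.\ $\bar x\in\PP S_\xi K$.

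Using the $C^1$ smoothness of $\partial K$, I would parameterize $\partial K$ over $V$ by its upper and lower boundary functions $\beta,\alpha: P_V K\to \R$. A standard computation then converts \eqref{eq_supp_again} into
\[
h_{\P K}(\bar\theta)=\int_V\Bigl[\max_i\langle\theta_i, \xi-\nabla\beta\rangle_- + \max_i\langle\theta_i, -\xi+\nabla\alpha\rangle_-\Bigr]dy',
\]
and analogously for $S_\xi K$, whose upper and lower boundary functions are $\pm\tfrac12(\beta-\alpha)$, so that $\nabla\beta_S=-\nabla\alpha_S=\tfrac12(\nabla\beta-\nabla\alpha)$. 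Substituting $\bar x$ and $\bar x^\pm$ into these formulas, and using $x_i\perp\xi$ together with $\nabla\alpha,\nabla\beta\in V$ to simplify each integrand, the key inequality reduces, with $B_i := \langle x_i,\nabla\beta\rangle$ and $C_i := \langle x_i,\nabla\alpha\rangle$, to the pointwise (in $y'\in V$) statement
\begin{align*}
\max_i\Bigl(\tfrac{(B_i-t_i)+(s_i-C_i)}{2}\Bigr)_{\!+} &+ \max_i\Bigl(\tfrac{(B_i-s_i)+(t_i-C_i)}{2}\Bigr)_{\!+} \\
&\leq \tfrac12\sum_{u\in\{t,s\}}\bigl[\max_i(B_i-u_i)_+ + \max_i(u_i-C_i)_+\bigr].
\end{align*}

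The main obstacle, although ultimately elementary, is justifying this pointwise inequality and carefully bookkeeping the boundary parameterizations. The inequality follows termwise from the convexity fact
\[
\max_i\bigl(\tfrac{a_i+b_i}{2}\bigr)_+ \leq \tfrac12\bigl(\max_i(a_i)_+ + \max_i(b_i)_+\bigr),
\]
applied separately to the two terms on the left (with $(a_i,b_i)=(B_i-t_i,\,s_i-C_i)$ and with $(a_i,b_i)=(B_i-s_i,\,t_i-C_i)$). This convexity fact is itself an immediate consequence of the monotonicity and convexity of $(\cdot)_+$ together with $\max_i(u_i+v_i)\leq\max_i u_i+\max_i v_i$. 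Integrating the pointwise inequality over $V$ then yields the key inequality and establishes the first assertion.

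For the equality statement, I invoke Lemma~\ref{l:permutationofvariables} applied to $S_\xi K$, which gives $\Delta^{-1}(\bar S_\xi\PP K)\subseteq S_\xi\pp K$. A quick check using $\max_{1\leq i\leq\hid}\langle x, n_L(y)\rangle_-=\langle x,n_L(y)\rangle_-$ shows that $h_{\P L}(\Delta x) = h_{\p L}(x)$, hence $\Delta^{-1}\PP L=\pp L$, for every $L\in\conbod$. If the first inclusion is actually an equality, one then has $\pp S_\xi K = \Delta^{-1}(\PP S_\xi K) = \Delta^{-1}(\bar S_\xi \PP K)\subseteq S_\xi \pp K$, which is the claimed reverse inclusion.
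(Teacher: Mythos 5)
Your proposal is correct and follows essentially the same route as the paper: reduce the inclusion to the midpoint inequality $h_{\P S_\xi K}(\bar x)\leq\tfrac12(h_{\P K}(\bar x^+)+h_{\P K}(\bar x^-))$, compute the support functions via the upper and lower boundary graphs of $K$, and conclude pointwise from the subadditivity of $\max_i(\cdot)_+$; the equality case via Lemma~\ref{l:permutationofvariables} and $\Delta^{-1}\PP L=\pp L$ is also exactly the paper's argument.
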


\begin{proof}
By the invariance property \eqref{eq_ellsym_invariance}, we may assume that $\xi =e_n,$ where $e_n= (0,\ldots, 0,1)$. In view of \eqref{eq:sym_def} it is enough to show that if $((x_i, t_i))_i, ((x_i, s_i))_i \in \PP K$,
	then $((x_i, \frac 12(t_i - s_i) ))_i \in \PP S_{\xi} K$. Write
\[
K = \{(x,t) \in \R^{n-1} \times \R \colon \u z(x) \leq t \leq \o z(x), x \in P_{e_n^{\perp}}K\},
\]
where $\u z, \o z \colon \R^{n-1} \to \R$ are $C^1$ functions in the relative interior of $P_{e_n^{\perp}}K$, and we are identifying $e_n^\perp=\R^{n-1}$ and $x+te_n=(x,t)$. Then the (classical) Steiner symmetral in the direction $e_n$ has the form
\[S_{e_n} K = \{(x,t) \in \R^{n-1} \times \R \colon -z(x) \leq t \leq z(x), x \in P_{e_n^{\perp}}K \},
\]
where $z = \frac 12(\o z - \u z)$. Note that, since $K$ is $C^1$, so is $S_{e_n} K$.
Set $\theta_i = (x_i, t_i)$ and $\bar\theta=(\theta_1,\ldots,\theta_\hid)$. 
Notice for $x$ in the interior of $P_{e_n^\perp}K,$ the outer unit normal on $\partial K$ at the point $(x,\bar{z}(x))$ is given by
$$n_K(x,\bar{z}(x)) = \frac{(-\nabla \o z(x),1)}{|(-\nabla \o z(x),1)|}.$$
Similarly, 
 the outer unit normal on $\partial K$ at the point $(x,\u{z}(x))$ is given by
$$n_K(x,\u{z}(x)) = \frac{(\nabla \u z(x),-1)}{|(\nabla \u z(x),-1)|}.$$
Then, we have from a variable substitution that
	\begin{align*}
		h_{\P K}&(\bar\theta)
		= \int_{P_{e_n^{\perp}}K} \max_{1\leq i \leq \hid} \left\{\left\langle \theta_i, \frac{(-\nabla \o z(x),1)}{|(-\nabla \o z(x),1)|} \right\rangle_- \right\} |(-\nabla \o z(x),1)| dx \\
		&\quad\quad+ \int_{P_{e_n^{\perp}}K} \max_{1\leq i \leq \hid} \left\{\left\langle \theta_i, \frac{(\nabla \u z(x),-1)}{|(\nabla \u z(x),-1)|} \right\rangle_- \right\} |(\nabla \u z(x),-1)| dx \\
		&= \int_{P_{e_n^{\perp}}K}\! \left(\max_{1\leq i \leq \hid} \left\{\langle \theta_i, {(-\nabla \o z(x),1)} \rangle_- \right\}\! +\! \max_{1\leq i \leq \hid} \left\{\langle \theta_i, {(\nabla \u z(x),-1)} \rangle_- \right\} \right)dx \\
		&= \int_{P_{e_n^{\perp}}K}\! \left(\max_{1\leq i \leq \hid} \{0, \langle x_i, \nabla \o z(x) \rangle - t_i  \}\! + \!\max_{1\leq i \leq \hid} \{0, -\langle x_i, \nabla \u z(x) \rangle + t_i \} \right) dx, 
	\end{align*}
    where we used that $t_- = (-t)_+ = \max\{0,-t\}$.

	Now assume $h_{\P K}( (x_i, t_i)_i ), h_{\P K}((x_i, s_i)_i ) \leq 1$ and let $r_i = \frac{1}{2}(t_i - s_i)$. Continuing our computation, we have
	\begin{align*}
        h&_{\P S_{e_n}K}((x_i, r_i)_i)
        \\
		&= \int_{P_{e_n^{\perp}}K} \left(\max_{1\leq i \leq \hid} \{0, \langle x_i, \nabla z(x) \rangle - r_i  \} \!+\! \max_{1\leq i \leq \hid} \{0,-\langle x_i, \nabla (-z)(x) \rangle + r_i \}\right)  dx \\
		&=\frac{1}{2} \int_{P_{e_n^{\perp}}K} \bigg(\max_{1\leq i \leq \hid} \{0, \langle x_i, \nabla \o z(x) \rangle - \langle x_i, \nabla \u z(x) \rangle - t_i + s_i  \}  \\& \quad\quad+ \max_{1\leq i \leq \hid} \{0, \langle x_i, \nabla \o z (x) \rangle - \langle x_i, \nabla \u z (x) \rangle + t_i - s_i \} \bigg) dx \\
		&\leq\frac{1}{2} \int_{P_{e_n^{\perp}}K} \bigg(\max_{1\leq i \leq \hid} \{0, \langle x_i, \nabla \o z(x) \rangle - t_i \} + \max_{1\leq i \leq \hid} \{0, -\langle x_i, \nabla \u z(x) \rangle + s_i  \}  \\& \quad\quad+ \max_{1\leq i \leq \hid} \{0, -\langle x_i, \nabla \u z (x) \rangle + t_i \} + \max_{1\leq i \leq \hid}\{0, \langle x_i, \nabla \o z (x) \rangle - s_i \}  \bigg)dx
	\end{align*}
	where we used that $\max_i\{a_i+b_i\} \leq \max_i\{a_i\} + \max_i\{b_i\}$ for every $a_i, b_i \in \R$.
	We obtain
	\begin{align*}
        h&_{\P S_{e_n}K}((x_i, r_i)_i)
        \\
		&\leq \frac{1}{2} \int_{P_{e_n^{\perp}}K} \bigg(\max_{1\leq i \leq \hid} \{0, \langle x_i, \nabla \o z(x) \rangle-t_i \} + \max_{1\leq i \leq \hid} \{0, -\langle x_i, \nabla \u z (x) \rangle + t_i \} \\& + \max_{1\leq i \leq \hid}\{0, \langle x_i, \nabla \o z (x) \rangle - s_i \}   + \max_{1\leq i \leq \hid} \{0, -\langle x_i, \nabla \u z(x) \rangle  + s_i  \}\bigg)dx \\
		&= \frac{1}{2}( h_{\P  K}( (x_i, t_i)_i) + h_{\P  K} ((x_i, s_i)_i) ) \leq 1,
	\end{align*}
 which completes the proof of the first inclusion. 
 
  If $\bar S_\xi \PP K = \PP S_\xi K$, then by Lemma \ref{l:permutationofvariables} we have
	\[\Pi^\circ S_\xi K = \Delta^{-1} \PP S_\xi K = \Delta^{-1} \bar S_\xi \PP K \subseteq S_\xi \Pi^\circ K.\]
\end{proof}

We are now in a position to prove Theorem~\ref{t:pettyprojectioninequality}.

\begin{proof}[Proof of Theorem~\ref{t:pettyprojectioninequality}]
	First we prove that
	\[\vol_n(K)^{n\hid-\hid} \vol_{n\hid}(\PP K) \leq \vol_n(\B)^{n\hid-\hid} \vol_{n\hid}(\PP \B).\]
	In view of Proposition~\ref{p:continuityofmultdimproj}, we can assume that $K$ is a $C^1$ convex body.
	Combining \eqref{eq:steiner_vol_increase} and Lemma~\ref{l:multidimSteiner}, we observe that, for any given $\xi \in \s$, one has
	\begin{equation}
	    \label{eq:pettychainofinequalities}
     \begin{split}
	    \vol_n(K)^{n\hid-\hid}\vol_{n\hid}(\PP K) &\leq \vol_n(S_{\xi} K)^{n\hid-\hid}\vol_{n\hid}(\bar S_{\xi}\PP K) 
     \\
     &\leq \vol_n(S_{\xi} K)^{n\hid-\hid}\vol_{n\hid}(\PP S_{\xi}K).
     \end{split}
	\end{equation}
	Finally, following the notation of Section~\ref{sec:properties}, we choose a sequence of directions $\{\xi_j\}_j \subset \s$ such that $S_{j}K \to \kappa_n^{-1/n}\vol_n(K)^{1/n}\B$ in the Hausdorff metric.
	Again, by Proposition \ref{p:continuityofmultdimproj} we obtain the result.

	Now assume there is equality in \eqref{eq:pettychainofinequalities}. By Proposition~\ref{p:GammaC1}, we know that $K$ is a $C^1$ convex body.
	By Lemma \ref{l:multidimSteiner} and the equality of volumes, we have $\bar S_\xi \PP K = \PP S_\xi K$ for every $\xi \in \s$.
	By the equality case of Lemma \ref{l:multidimSteiner}, we have $\Pi^\circ S_\xi K \subseteq S_\xi \Pi^\circ K$ and, therefore,
    \[\Pi^\circ S_\xi K = S_\xi \Pi^\circ K\]
    for every $\xi \in \s$. This implies that $K$ is an extremal body for the classical Petty projection inequality and thus, an ellipsoid.
\end{proof}

From the mixed volume formula, we can also obtain an isoperimetric-type inequality, a $\hid$th-order analogue of Petty's isoperimetric inequality \eqref{eq:petty_theorem}.

\begin{reptheorem}{t:petty_classic}
    Let $K\in\conbod$ and $\hid\in\mathbb{N}.$ Then, one has the following inequality:
    \begin{align*}\vol_{n\hid}(\PP K)\vol_{n-1}(\partial K)^{n\hid}
    &\geq  \vol_{n\hid}(\PP B_2^n)\vol_{n-1}(\s)^{n\hid} 
    \\
    &\geq \kappa_{n\hid}\left(\frac{n\kappa_{n}}{w_{n\hid}(\P\B)}\right)^{n\hid}.
    \end{align*}

Equality in the first inequality holds if and only if $\Pi K$ is an Euclidean ball. If $\hid=1$, there is equality in the second inequality, while for $\hid \geq 2,$ the second inequality is strict.
\end{reptheorem}
\begin{proof}
    We begin with the first inequality.    
    Using the polar formula for volume \eqref{eq:polar_2}, one obtains 
    \begin{equation}\label{eq:polar_vol}
    \vol_{n\hid}(\PP K) = \frac 1{n\hid} \int_{\S} (n V_n(K[n-1], C_{-\bar\theta}))^{-n\hid} d\bar\theta.
    \end{equation}

     Note that for any $T \in \mathcal{O}(n)$, we have $\overline{T} \in \mathcal{O}(n\hid)$. Now, by a change of variables and integrating with respect to the Haar probability measure $\mu$ on $\mathcal{O}(n)$, we obtain
     
     \[\vol_{n\hid}(\PP K) = \frac 1{n\hid} \int_{\mathcal{O}(n)} \int_{\S} (n V_n(K[n-1], C_{-\overline T \bar\theta}))^{-n\hid} d\bar\theta d \mu(T).\]
     
     By Fubini's theorem, Jensen's inequality and the identity $C_{- \overline T \bar \theta} = T C_{-\bar \theta}$, we have
     \begin{align}
         \vol_{n\hid}&(\PP K) 
         \geq  \frac 1{n\hid} \int_{\S} \left( \int_{\mathcal{O}(n)} n V_n(K[n-1], T C_{-\bar\theta}) d \mu(T) \right)^{-n\hid} d\bar\theta \\
         &=  \frac 1{n\hid} \int_{\S} \left( \int_{\mathcal{O}(n)} \int_{\s} h_{C_{-\bar\theta}}(T^t u) d \sigma_K(u) d \mu(T) \right)^{-n\hid} d\bar\theta.
     \end{align}
     Using Fubini's theorem again, and the fact that $\mathcal{O}(n)$ acts uniformly on $\s$, 
     \begin{align}
         \vol_{n\hid}&(\PP K)
         \geq \frac 1{n\hid} \int_{\S} \left( \int_{\s} \int_{\mathcal{O}(n)} h_{C_{-\bar\theta}}(T^t u) d \mu(T) d \sigma_K(u)  \right)^{-n\hid} d\bar\theta \\
         &= \frac 1{n\hid} \int_{\S} \left( \frac 1{n\kappa_n} \int_{\s} \int_{\s} h_{C_{-\bar\theta}}(\nu) d \nu d \sigma_K(u)  \right)^{-n\hid} d\bar\theta \\
         &= \frac 1{n\hid} \int_{\S} \left( \vol_{n-1}(\partial K) \frac 1{\kappa_n} V_n(B_2^n[n-1],C_{-\bar\theta}) \right)^{-n\hid} d\bar\theta \\         &= \vol_{n-1}(\partial K)^{-n\hid} \frac{\kappa_n^{n\hid}}{n\hid} \int_{\S} V_n(B_2^n[n-1],C_{-\bar\theta})^{-n\hid}d\bar\theta,
     \end{align}
and we obtain
\begin{align*}\vol_{n\hid}(\PP K) &\vol_{n-1}(\partial K)^{n\hid}
\\
&\geq \vol_{n-1}(\s)^{n\hid} \frac 1 {n\hid} \int_{\S} (n V_n(B_2^n[n-1],C_{-\bar\theta}))^{-n\hid}d\bar\theta.
\end{align*}
     
    But by \eqref{eq:polar_vol}, this quantity is precisely $\vol_{n-1}(\s)^{n\hid} \vol_{n\hid}(\PP B_2^n)$. This yields the first inequality in the proposition.

    Equality holds in this inequality if and only if $V_n(K[n-1], C_{-\overline T \bar\theta})$ is independent of $T$ for almost every $\bar\theta$. By continuity (see the proof of Proposition \ref{p:continuityofmultdimproj}), in this case it actually holds that $V_n(K[n-1], C_{-\overline T \bar\theta})$ is independent of $T$ for any $\bar\theta$. In particular this is true if we take $\bar\theta =(\theta, o, o, \ldots, o)$ for some $\theta \in \s$, in which case
    $V_n(K[n - 1], C_{-\overline T \bar\theta}) = h_{\Pi K}(T\theta)$. Thus $\Pi K$ is a ball.    

    For the second inequality, we return to \eqref{eq:polar_vol} and now apply Jensen's inequality to the integral over $\S$, giving
    $$\vol_{n\hid}(\PP K) \geq  (\kappa_{n\hid}n\hid)^{n\hid}\kappa_{n\hid}\left(\int_{\S}\int_{\s} h_{C_{-\bar\theta}}(u)d\sigma_K(u)d\bar\theta\right)^{-n\hid}.$$

    \noindent If $\hid=1,$ then $\int_{\s}h_{[o,-\theta_1]}(u)d\sigma_K(u)$ is constant for all $\theta_1,$ if and only if $K$ is a multiple of $B_2^n.$ For $\hid\geq 2,$ the corresponding integral is never constant for all $\bar\theta.$ Next, it is not hard to see from the rotational invariance of the spherical Lebesgue measure that $\int_{\S}h_{C_{-\bar\theta}}(u)d\bar\theta = \text{constant},$
    i.e., this integral is independent of $u$. Thus, from Fubini's theorem, one obtains
    $$\vol_{n\hid}(\PP K)\vol_{n-1}(\partial K)^{n\hid} \geq  (\kappa_{n\hid}n\hid)^{n\hid}\kappa_{n\hid}\left(\int_{\S} h_{C_{-\bar\theta}}(u)d\bar\theta\right)^{-n\hid}.$$
    We now replace $\int_{\S} h_{C_{-\bar\theta}}(u)d\bar\theta$ with something more geometric in meaning. Since this is a constant, we can integrate it over $\s$ and obtain
    $$\int_{\S} h_{C_{-\bar\theta}}(u)d\bar\theta=\frac{1}{n\kappa_n}\int_{\s}\int_{\S} h_{C_{-\bar\theta}}(u)d\bar\theta du.$$
    However, from a use of Fubini's theorem, this becomes
    $$\int_{\S} h_{C_{-\bar\theta}}(u)d\bar\theta =\frac{1}{n\kappa_n}\int_{\S}h_{\P \B}(\bar\theta)d\bar\theta=\frac{\hid \kappa_{n\hid}}{\kappa_n}w_{n\hid}(\P \B).$$
    
    \noindent Inserting this computation yields the result.
\end{proof}

\section{The Busemann-Petty Centroid Inequality and Random Processes Without Independence}
\label{sec:cent}

We start by obtaining the $\hid$th-order Busemann-Petty centroid inequality, Theorem~\ref{t:BPCH}, as a direct corollary of the $\hid$th-order Petty's projection inequality. We list it here again for the convenience of the reader.

\begin{reptheorem}{t:BPCH}[The mth-order Busemann-Petty centroid inequality] 
For $L\in\sta$, where $n,\hid\in\mathbb{N},$ one has,
    \[\frac{\vol_n(\G  L)}{\vol_{n\hid}(L)^{1/\hid}} \geq \frac{\vol_n(\G \PP\B)}{\vol_{n\hid}(\PP\B)^{1/\hid}}, \]
    with equality if and only if $L=\PP E$ for any ellipsoid $E\in\conbod.$
\end{reptheorem}
\begin{proof}
    Applying Theorem~\ref{t:pettyprojectioninequality} to the body $\G L,$ one has the bound
    $$\vol_{n\hid}(\PP \G  L)^{-1} \geq \vol_n(\G  L)^{n\hid-\hid}\kappa_n^{\hid-n\hid}\vol_{n\hid}(\PP \B)^{-1},$$
    with equality if and only if $\G L$ is an ellipsoid.
    Combining this bound with \eqref{eq_duality_2} and the fact that Lemma~\ref{l:class} shows $$\vol_{n}(\G \PP \B)=\kappa_n\left(\frac{\hid}{(n\hid+1)\kappa_n}\right)^n$$ yields the inequality. The equality conditions are inherited from Theorem~\ref{t:pettyprojectioninequality} and Lemma~\ref{l:class}.
\end{proof}

\begin{remark}
Given a compact set $K\subset \R^n$, its moment body $M K$ is defined as 
$$h_{MK}(\theta)=\int_L|\langle x,\theta \rangle|dx \longleftrightarrow MK=\vol_n(K)\g K.$$
Similarly, we can define the $\hid$th-order moment body of a compact set $L\subset \Rnhi$ as the $C^1$ convex body (from Proposition~\ref{p:GammaC1}) $M^\hid L$ in $\Rn$ whose support function is given by
$$h_{M^\hid L}(\theta)= \int_L \max_{1\leq i \leq \hid} \langle x_i, \theta \rangle_{-} d\bar{x} \longleftrightarrow M^\hid L=\vol_{n\hid}(L)\G L.$$
From Proposition~\ref{p:cen_tran}, one has
$$ M^\hid  \overline{T} L = |\det T|^{\hid}TM^\hid L$$
for $T\in GL_n(\R)$.
The Busemann-Petty inequality for the mth-order moment body follows from Theorem~\ref{t:BPCH} via homogeneity: for $L\in\sta,$
\[\frac{\vol_n(M^\hid  L)}{\vol_{n\hid}(L)^{n+\frac{1}{\hid}}} \geq \frac{\vol_n(M^\hid \PP\B)}{\vol_{n\hid}(\PP\B)^{n+\frac{1}{\hid}}},\]
with equality if and only if $L=\PP E$ for any ellipsoid $E\in\conbod.$
\end{remark}

We now work towards proving Theorem~\ref{t:randomsimplex1}. We first need the following proposition.
\begin{proposition} For $L\subset \Rnhi$ a compact set with positive volume, $K\in\conbod,$ and $\bar x\in\Rnhi$ one has
\label{p:randomsimplex1}
	\[V_n(K [n-1], \G L) \!=\!\frac{1}{\vol_{n\hid}(L)} \int_L V_n(K [n-1], C_{-\bar x}) d\bar x \!=\! \E_L (V_n(K[n-1], C_{-\bar X}) ).\]
\end{proposition}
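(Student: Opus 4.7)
The plan is to reduce the claim to an application of Fubini's theorem after rewriting both sides through the integral representation of mixed volumes as integrals against the surface area measure $\sigma_K$ on $\s$.

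First, I would recall the identity $V(K[n-1], M) = \frac{1}{n}\int_{\s} h_M(u)\, d\sigma_K(u)$ from \eqref{eq:mixed_0}, valid for any $M \in \conbod$. Applying this with $M = \G L$ and plugging in the definition of $h_{\G L}$ given in \eqref{eq:cen_hi} yields
\begin{equation*}
V(K[n-1], \G L) = \frac{1}{n\vol_{n\hid}(L)} \int_{\s} \int_L \max_{1 \leq i \leq \hid} \langle x_i, u\rangle_{-}\, d\bar x\, d\sigma_K(u).
\end{equation*}

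Next, I would observe that for each fixed $\bar x \in \Rnhi$ the integrand matches the support function of $C_{-\bar x}$. Indeed, since $h_{[o,v]}(u) = \langle v, u\rangle_+$ and support functions distribute as maxima over convex hulls, one computes $h_{C_{-\bar x}}(u) = \max_i \langle -x_i, u\rangle_+ = \max_i \langle x_i, u\rangle_-$. The integrand is bounded above by $|\bar x| \cdot |u|$ and $L$ is compact, so Fubini's theorem applies and lets me swap the order of integration, giving
\begin{equation*}
V(K[n-1], \G L) = \frac{1}{\vol_{n\hid}(L)} \int_L \left( \frac{1}{n}\int_{\s} h_{C_{-\bar x}}(u)\, d\sigma_K(u) \right) d\bar x.
\end{equation*}

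Finally, a second appeal to the mixed volume integral formula identifies the inner parenthesis as $V(K[n-1], C_{-\bar x})$, producing the first equality in the statement. The second equality is just unwinding the definition of the expectation $\E_L$, since $\bar X$ is uniformly distributed on $L$ with density $\vol_{n\hid}(L)^{-1}\chi_L$. There is no serious obstacle here; the only minor point to justify is that the inner integrand is measurable in $\bar x$ and integrable on $L \times \s$ against $d\bar x \otimes d\sigma_K$, which follows from the continuity of $(\bar x, u) \mapsto \max_i \langle x_i, u\rangle_-$ together with compactness of $L$ and finiteness of $\sigma_K(\s)$.
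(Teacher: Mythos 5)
Your proof is correct and follows essentially the same route as the paper: expand $V(K[n-1],\G L)$ via the integral formula \eqref{eq:mixed_0}, substitute the definition of $h_{\G L}$, identify the integrand with $h_{C_{-\bar x}}$, and swap the order of integration by Fubini. The only difference is that you explicitly justify the Fubini step and the identity $h_{C_{-\bar x}}(u)=\max_i\langle x_i,u\rangle_-$, which the paper treats as immediate.
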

\begin{proof}
	The result follows from the definition of mixed volumes, the definition of $\G L,$ and Fubini's theorem. Indeed:
 \begin{align*}
V_n(K[n-1],\G L)&=\frac{1}{n}\int_{\s}h_{\G L}(\theta)d\sigma_K(\theta)
\\
&=\frac{1}{n}\int_{\s}\frac{1}{\vol_{n\hid}(L)}\int_{L}\max_{1 \leq i \leq \hid}\langle x_i,\theta\rangle_-d\bar xd\sigma_K(\theta)
\\
&=\frac{1}{\vol_{n\hid}(L)}\int_{L}\frac{1}{n}\int_{\s}h_{C_{-\bar x}}d\sigma_K(\theta)d\bar x
\\
&=\frac{1}{\vol_{n\hid}(L)} \int_L V_n(K [n-1], C_{-\bar x}) d\bar x.
 \end{align*}
\end{proof}
We now prove Theorem~\ref{t:randomsimplex1}, which we first restate.
\begin{reptheorem}{t:randomsimplex1}
	The functional
	\[(K,L) \in \conbod \times \sta \mapsto \vol_{n\hid}(L)^{-\frac{1}{n\hid}}\vol_n(K)^{- \frac{n-1}n} \E_L (V_n(K[n-1], C_{\bar X}) )\]
	is uniquely minimized when $K$ is an ellipsoid and $L = \lambda \PP K$ for some $\lambda >0$.
\end{reptheorem}
\begin{proof}
	By Proposition \ref{p:randomsimplex1}, Minkowski's first inequality for mixed volumes \eqref{eq:min_first} and Theorem \ref{t:BPCH},
	\begin{align}
		 \vol_{n\hid}(L)^{-1-\frac{1}{n\hid}}&\vol_n(K)^{- \frac{n-1}n}\int_L V_n(K[n-1], C_{\bar x}) d\bar x
   \\
   &=\vol_{n\hid}(L)^{-1-\frac{1}{n\hid}}\vol_n(K)^{- \frac{n-1}n}\int_L V_n(-K[n-1], C_{-\bar x}) d\bar x
   \\
		&=  \vol_{n\hid}(L)^{-\frac{1}{n\hid}}\vol_n(K)^{- \frac{n-1}n}V_n(-K[n-1], \G L) \\
		&\geq \vol_{n\hid}(L)^{-\frac{1}{n\hid}}\vol_n(\G L)^\frac{1}{n}
        \\
        &\geq \vol_{n\hid}(\PP\B)^{-\frac{1}{n\hid}}\vol_n(\G \PP\B)^{\frac{1}{n}}.
	\end{align}
	From the equality conditions of Minkowski's first inequality, there is equality if and only of $K$ and $\G L$ are homothetic. From Theorem~\ref{t:BPCH}, there is equality if and only if $L = \overline T (\PP \B)$.
	By Lemma~\ref{l:class}, this happens if and only if $K$ is an ellipsoid and $L$ is a dilate of $\PP K$.
\end{proof}
We next show we can generalize these results. Minkowski \cite{Min1} showed that volume behaves as a polynomial: given $\lambda_1,\dots,\lambda_n >0 $ and $K_1,\dots, $ $K_n \in \conbod,$ one has that
$$
\vol_n(\lambda_1 K_1+\lambda_2 K_2+\cdots+\lambda_n K_n)=\!\sum_{i_1, i_2, \ldots, i_n=1}^n V_n\left(K_{i_1}, \ldots, K_{i_n}\right) \lambda_{i_1} \lambda_{i_2} \ldots \lambda_{i_n} .
$$
Then $V_n\left(K_{i_1}, \ldots, K_{i_n}\right)$ is called the mixed volume of $K_{i_1}, \ldots, K_{i_n}$.

\noindent We can then repeat the above procedure and obtain a Groemer-like result for these larger order mixed volumes. We first need the following proposition.
\begin{proposition}\label{p:randomsimplex2} For $K_1,\ldots,K_{n-1}\in\conbod$ and a compact set $L\subset \Rnhi$ with positive volume,  one has
	\[V_n(K_1, \ldots, K_{n-1}, \G L) = \frac{1}{\vol_{n\hid}(L)} \int_L V_n(K_1, \ldots, K_{n-1}, C_{-\bar x}) d\bar x,\]
	\[\vol_{n}(\G L) = \frac{1}{\vol_{n\hid}(L)^n}\int_L \cdots \int_L V_n(C_{-\bar{x}_1}, \ldots, C_{-\bar{x}_n}) d\bar x_1 \ldots d\bar x_n.\]
\end{proposition}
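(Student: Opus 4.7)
The plan is to mirror the proof of Proposition~\ref{p:randomsimplex1} by using the integral representation of mixed volumes via mixed surface area measures, then to deduce the second formula from the first by iterating on each of the $n$ slots of $V(\G L, \ldots, \G L) = \vol_n(\G L)$.

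For the first identity, I would start from the classical formula
\[
V(K_1, \ldots, K_{n-1}, \G L) = \frac{1}{n} \int_{\s} h_{\G L}(\theta)\, dS_{K_1, \ldots, K_{n-1}}(\theta),
\]
where $S_{K_1, \ldots, K_{n-1}}$ denotes the mixed surface area measure (the natural generalization of $\sigma_K$ for which \eqref{eq:mixed_0} is the special case $K_1 = \cdots = K_{n-1} = K$). Plugging in the definition \eqref{eq:cen_hi} of $h_{\G L}$ and applying Fubini's theorem, one gets
\[
V(K_1, \ldots, K_{n-1}, \G L) = \frac{1}{\vol_{n\hid}(L)} \int_L \frac{1}{n} \int_{\s} h_{C_{-\bar x}}(\theta)\, dS_{K_1,\ldots,K_{n-1}}(\theta)\, d\bar x,
\]
and the inner integral is precisely $V(K_1, \ldots, K_{n-1}, C_{-\bar x})$. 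The exchange is justified because $h_{\G L}$ is continuous and bounded on $\s$ and $L$ has finite volume, so the double integral is absolutely convergent.

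For the second identity, I would peel off the $n$ copies of $\G L$ in $\vol_n(\G L) = V(\G L, \ldots, \G L)$ one at a time, using the first identity together with the symmetry and multilinearity of mixed volumes. Applying the first identity with all $K_i = \G L$ gives
\[
\vol_n(\G L) = \frac{1}{\vol_{n\hid}(L)} \int_L V(\G L, \ldots, \G L, C_{-\bar x_n})\, d\bar x_n.
\]
By symmetry of mixed volumes, the integrand equals $V(C_{-\bar x_n}, \G L, \ldots, \G L)$, which has $n-1$ copies of $\G L$ in the last $n-1$ slots. Applying the first identity again (treating the fixed $C_{-\bar x_n}$ as one of the $K_i$) removes another $\G L$, and iterating $n$ times yields the claimed formula. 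The use of Fubini at each stage is legitimate since every mixed volume that appears is a continuous (hence bounded) function of $\bar x_1, \ldots, \bar x_n$ on the compact set $L^n$.

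No serious obstacle is expected: the only nontrivial step is invoking the mixed surface area measure representation, but this is standard (see \cite{Sh1}). The key structural point, as in Proposition~\ref{p:randomsimplex1}, is that the support function of $\G L$ is itself an average of support functions $h_{C_{-\bar x}}$ over $L$, which allows mixed volumes to commute with this averaging by Fubini.
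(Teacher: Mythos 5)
Your proof is correct and follows essentially the same route as the paper: the first identity via the mixed surface area measure representation of $V(K_1,\ldots,K_{n-1},\cdot)$ combined with Fubini (exactly as in Proposition~\ref{p:randomsimplex1}), and the second by writing $\vol_n(\G L)=V(\G L,\ldots,\G L)$ and applying the first identity $n$ times. The paper's proof is just a terser version of yours.
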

\begin{proof}
	
 It is well-known that the mixed volumes have an integral representation similar to that of \eqref{eq:mixed_0}, except instead of $\sigma_K,$ one integrates with respect to the \textit{mixed surface-area measures} \cite[Chapter 5]{Sh1}: consider a convex body $K$ in $\R^n$ and a collection of $(n-1)$ convex bodies $\mathfrak{K}=(K_2,\dots,K_n),K_i\in\conbod$. Then,
 \begin{equation}
 \label{eq:mixed_most}
     V_n(K_1,\mathfrak{K})=\frac{1}{n}\int_{\s}h_{K_1}(u)d\sigma_{\mathfrak{K}}(u),
 \end{equation}
 where $\sigma_{\mathfrak{K}}$ is the mixed surface-area measure of the collection $\mathfrak{K}$.
 Thus, the proof of the first identity is similar to that of Proposition~\ref{p:randomsimplex1}. As for the second identity, use \[\vol_n(\G L) = V_n(\G L, \cdots, \G L)\] and apply the first identity $n$-times.
\end{proof}
Finally, given $L \in \sta$, we select independently $n$ points $\bar X_i \in \Rnhi$ uniformly distributed inside $L$.
We denote the expected mixed volume of the closed convex hulls by
\[\E_{L^\hid}( V_n(C_{\bar X_1}, \cdots, C_{\bar X_n} ) ),\]
and we obtain the following extension of the Busemann random simplex inequality.
\begin{corollary}
	\label{t:randomsimplex2}
	The functional
	\[L \in \sta \mapsto \vol_{n\hid}(L)^{-\frac{1}{\hid}}\E_{L^\hid} ( V_n(C_{\bar X_1}, \cdots, C_{\bar X_n} ) )\]
	is minimized exactly when $L = \PP E$, where $E$ is an ellipsoid.
\end{corollary}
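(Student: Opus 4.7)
The plan is to recognize that this corollary is essentially a reformulation of Theorem~\ref{t:BPCH} via the second identity in Proposition~\ref{p:randomsimplex2}. First, I would rewrite the expected mixed volume directly as an iterated integral over the product measure on $L^n$:
\[
\E_{L^\hid}(V(C_{\bar X_1}, \ldots, C_{\bar X_n})) = \frac{1}{\vol_{n\hid}(L)^n} \int_L \cdots \int_L V(C_{\bar x_1}, \ldots, C_{\bar x_n})\, d\bar x_1 \cdots d\bar x_n.
\]
The next step is to note that $C_{-\bar x} = -C_{\bar x}$ (this is immediate from $C_{\bar x} = \conv_{1\le i\le\hid}[o, x_i]$), so applying the central reflection $-I$ to every argument and using the standard identity $V(TK_1,\dots,TK_n) = |\det T|\,V(K_1,\dots,K_n)$ with $T = -I$ gives
\[
V(C_{\bar x_1}, \ldots, C_{\bar x_n}) = V(C_{-\bar x_1}, \ldots, C_{-\bar x_n}).
\]

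Combining this observation with the second identity of Proposition~\ref{p:randomsimplex2} yields
\[
\E_{L^\hid}(V(C_{\bar X_1}, \ldots, C_{\bar X_n})) = \vol_n(\G L).
\]
Consequently, the functional in question is precisely
\[
L \mapsto \vol_{n\hid}(L)^{-\frac{1}{\hid}} \vol_n(\G L),
\]
which is exactly the quantity bounded below in Theorem~\ref{t:BPCH}.

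Applying Theorem~\ref{t:BPCH} directly gives the desired lower bound, with equality if and only if $L = \PP E$ for some ellipsoid $E \in \conbod$. Since every step above is an equality, the equality case in the corollary is inherited verbatim from that of Theorem~\ref{t:BPCH}. In this sense, there is no genuine obstacle: the only thing to verify carefully is the sign-flip identity for mixed volumes, and after that the statement is a direct corollary of the higher-order Busemann--Petty centroid inequality already established.
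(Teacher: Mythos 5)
Your proposal is correct and takes essentially the same route as the paper: both reduce the corollary to Theorem~\ref{t:BPCH} via the second identity of Proposition~\ref{p:randomsimplex2}. The only (cosmetic) difference is in how the central reflection is handled -- you absorb it into the mixed volume via $V(-K_1,\dots,-K_n)=V(K_1,\dots,K_n)$ and apply Theorem~\ref{t:BPCH} to $L$, whereas the paper applies Proposition~\ref{p:randomsimplex2} and Theorem~\ref{t:BPCH} to $-L$; both correctly yield the identity $\E_{L^\hid}(V(C_{\bar X_1},\dots,C_{\bar X_n}))=\vol_n(\G L)=\vol_n(\G(-L))$ and the same equality characterization.
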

\begin{proof}
	From Proposition \ref{p:randomsimplex2}, one obtains $$\vol_n(\G (-L))= \E_{L^\hid} ( V_n(C_{\bar X_1}, \cdots, C_{\bar X_n} ) ).$$ From the translation invariance of volume, the result therefore follows from Theorem \ref{t:BPCH} applied to $-L.$
\end{proof}

 \section{The mth-order affine Sobolev inequality} \label{sec:Sobolev Inequality}
 
Throughout this section, we say that a function $f:\Rn\to\R$ is non-identically zero if $\vol_n(\left\{x\in\Rn:f(x) \neq 0\right\}) >0$. Recall that $f\in BV(\R^n)$, the space of functions of bounded variation, if, by definition, there exists a vector valued measure $Df$ such that, if $\eta_f:\Rn\to \Rn$ is the Radon-Nykodim derivative of $Df$ with respect to $|Df|$ (the variation measure of $f$), then
$$\int_{\Rn} f(x) \text{ div}( \psi(x) ) dx = - \int_{\Rn} \langle \eta_f (x), \psi(x) \rangle d|Df|(x)$$
holds for every compactly supported, smooth vector field $\psi:\Rn \to \Rn$. In the case when $f$ is differentiable, or in the Sobolev space $W^{1,1}(\Rn)$, one has, $d|Df|(x)=|\nabla f(x)|dx$ and $\eta_f=\nabla f / |\nabla f|$ when $\nabla f \neq o,$ and $o$ otherwise (see \cite{EvansGar}).

It was shown in \cite{TW12} that given a non-identically zero function $f \in BV(\Rn)$, there exists a unique Borel measure $\mu_f$ on $\s$, not concentrated on any great hemisphere, such that
\begin{equation}\int_{\s}g(u)d{\mu_f}(u)=\int_{\Rn}g(-\eta_f(x))d|Df|(x)
\label{eq:LYZog}
\end{equation}
for every $1$-homogeneous function $g$ on $\Rn\setminus\{o\}$. But also, the center of mass of $\mu_{f}$ is the origin. Indeed, with $\{e_{i}\}$ the canonical basis of $\Rn$,
\begin{align*}
\int_{\s} u d\mu_{f}(u)&=\sum_{i=1}^n\left(\int_{\s} \langle u,e_{i} \rangle d\mu_{f}(u)\right)e_{i}
\\
&=-\sum_{i=1}^n\left(\int_{\Rn} \langle \eta_f (x),e_{i} \rangle d|Df|(x)\right)e_{i}
\\
&=\sum_{i=1}^n\left(\int_{\Rn}  f(x) \; \mathrm{div} (e_{i}) dx\right)e_{i} =\sum_{i=1}^n 0e_{i}= o.
\end{align*}
Thus, we apply Minkowski's existence theorem to $\mu_{f}$ and obtain a unique convex body with center of mass at the origin, the asymmetric LYZ body of $f$ denoted $\langle f \rangle$, such that $\sigma_{-\langle f \rangle}=\mu_f$. That is, $\langle f \rangle$ satisfies the following change of variables formula for every $1$-homogeneous function $g$ on $\Rn\setminus\{o\}$:
\begin{equation}\int_{\s}g(u)d\sigma_{\langle f\rangle}(u)=\int_{\Rn}g(\eta_f(x))d|Df|(x).
\label{eq:LYZ}
\end{equation}

We remark that the epithet ``LYZ Body of $f\in BV(\Rn)$" in \cite{TW12} is reserved for the origin-symmetric convex body whose surface area measure is the \textit{even} part of $\sigma_{\langle f\rangle}$, in which case \eqref{eq:LYZ} would only hold for even, 1-homogeneous functions $g$ (this distinction is made implicitly in the proof of \cite[Theorem 3.1]{TW12} but not in the statement of that theorem). In fact, this origin-symmetric convex body is the Blaschke body of $\langle f\rangle$ (see \eqref{eq:BL_body}). The not necessarily symmetric body is important for our setting. Indeed, by setting $g(u)=h_{C_{-\bar\theta}}(u)$ in \eqref{eq:LYZ}, which is in general not an even function in $u$, we obtain the \textit{LYZ projection body of order $\hid$ of $f\in BV(\Rn)$}, $\Pi^\hid \langle f\rangle$, defined via
\begin{equation}
h_{\LYZ{f}}(\bar \theta) = \int_{\R^n} \max_{1\leq i \leq \hid}\langle \eta_f(x), \theta_i \rangle_-d|Df|(x).
\end{equation}

We need the following lemma. In the case of Sobolev functions, it was done in by Federer and Fleming \cite{FF60} and Maz'ja \cite{VGM60} when $K=\B$. Gromov later generalized this to when $K$ is any symmetric convex body \cite{MS86}. Cordero-Erausquin, Nazaret and Villani then extended Gromov's result, with equality conditions, in the case when $f\in BV(\Rn)$ \cite[Theorem 3]{CENV04}. The version we present here, which holds for all convex bodies $K$ containing the origin in their interior, follows by either modifying the proof of \cite[Theorem 3]{CENV04} slightly, or, alternatively, from the result by Alvino, Ferone, Trombetti and Lions for Sobolev functions \cite[Corollary 3.2]{AFTL97}, an approximation argument involving $W^{1,1}(\Rn)$, and repeating the argument for equality conditions done in \cite{CENV04}.
\begin{lemma}[Anisotropic Sobolev inequality]
\label{l:asob}
Let $K$ be a convex body in $\R^n$ containing the origin in its interior. Consider a compactly supported, non-identically zero function $f\in BV(\Rn)$.
Then, 
$$\|f\|_{\frac{n}{n-1}}\vol_n(K)^{\frac{1}{n}} \leq \frac{1}{n} \int_{\Rn} h_K(\eta_f(z))d|Df|(z),$$
with equality if and only if there exists a convex body $L$ homothetic to $K$ and a constant $A>0$ such that $f=A\chi_L$.
\end{lemma}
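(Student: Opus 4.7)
The plan is to combine three classical ingredients: the coarea formula for $BV$ functions, the Wulff (anisotropic isoperimetric) inequality for the convex body $-K$, and Minkowski's integral inequality in $L^{n/(n-1)}$. The guiding intuition is that if $f = A\chi_L$ with $L$ a positive dilate of $-K$ up to translation (so that on $\partial L$ one has $\eta_f = -n_L$), both sides of the asserted inequality collapse to $A\lambda^{n-1}\vol_n(K)$ via the equality case of the Wulff inequality; the rigidity in the two functional inequalities will force exactly this structure in the equality analysis.

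First I would reduce to $f \in C_c^\infty(\Rn)$ using the density of smooth compactly supported functions in $BV(\Rn)$ with respect to the strict topology, together with Reshetnyak's continuity theorem applied to the $1$-homogeneous convex integrand $h_K$ (which is continuous on $\s$ since $K$ contains the origin in its interior). For such $f$, the coarea formula gives
\[
\int_{\Rn} h_K(\eta_f(z))\, d|Df|(z) = \int_{\Rn} h_K(\nabla f(z))\, dz = \int_{-\infty}^\infty \int_{\partial^* E_t} h_{-K}(n_{E_t})\, d\mathcal{H}^{n-1}\, dt,
\]
where $E_t := \{f > t\}$, and we have used $\eta_f = -n_{E_t}$ on the reduced boundary $\partial^* E_t$ together with $h_K(-u) = h_{-K}(u)$. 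Applying the Wulff inequality
\[
\int_{\partial^* E} h_{-K}(n_E)\, d\mathcal{H}^{n-1} \geq n\vol_n(K)^{1/n}\vol_n(E)^{(n-1)/n}
\]
to the bounded representative $F_t$ of each level set (i.e.\ $F_t = E_t$ for $t > 0$ and $F_t = E_t^c$ for $t$ below the infimum of $f$), with equality if and only if $F_t$ is a translate of a positive dilate of $-K$, and then appealing to Minkowski's integral inequality
\[
\int_{-\infty}^\infty \vol_n(F_t)^{(n-1)/n}\, dt \geq \|f\|_{n/(n-1)},
\]
yields the asserted inequality.

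For the equality case, Wulff saturation forces each nondegenerate $F_t$ to be a positive dilate of $-K$ up to translation, while Minkowski saturation on the nested indicator family $\{\chi_{F_t}\}$ forces $f$ to be a single indicator $A\chi_L$; thus $L$ is a positive dilate of $-K$ up to translation, which under the convention allowing negative scalings is what the paper terms ``homothetic to $K$''. The main technical obstacle is the rigorous equality case of the Wulff inequality for arbitrary sets of finite perimeter, which rests on the rigidity results of Fonseca--M\"uller. A secondary subtlety is the handling of sign-changing $f$: working with the \emph{signed} level sets $E_t$ in the coarea step is essential, because passing to $|f|$ would alter $h_K(\eta_f)$ on $\{f < 0\}$ whenever $K$ is non-symmetric (since $h_K(-v) \neq h_K(v)$). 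An alternative route, cited by the authors, is the Brenier optimal-transport proof of Cordero-Erausquin--Nazaret--Villani, which directly delivers both the inequality and its rigidity in a single stroke.
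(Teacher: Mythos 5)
The paper does not actually prove this lemma: it quotes it from the literature, pointing to two possible routes (a slight modification of the mass-transport proof of Cordero-Erausquin--Nazaret--Villani, or the Alvino--Ferone--Trombetti--Lions inequality plus an approximation argument and a re-run of the equality analysis). Your coarea-plus-Wulff argument is a correct, essentially self-contained reconstruction of that second route: the BV coarea formula for the anisotropic total variation, the Wulff inequality with Fonseca--M\"uller rigidity, and Minkowski's integral inequality do assemble into a complete proof, equality case included. Your observation that the relevant Wulff shape is $-K$ rather than $K$ (because $\eta_f=-n_{E_t}$ on $\partial^* E_t$) is precisely what makes the non-symmetric case work, and it correctly identifies the extremal $L$ as a translate of a positive dilate of $-K$, which is what the paper's signed notion of homothety is meant to capture.

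Two small points to tighten. First, the reduction to $f\in C_c^\infty(\Rn)$ via Reshetnyak continuity yields only the inequality; since the extremizers are indicator functions, the equality analysis must be carried out directly at the BV level. This is fine --- the coarea formula for $\int_{\Rn} h_K(\eta_f)\,d|Df|$ holds for arbitrary $f\in BV(\Rn)$ --- but you should say you are running the argument in BV rather than routing everything through the smooth case. Second, for negative levels the set you symmetrize is $F_t=E_t^{c}=\{f\le t\}$, whose outer normal is $-n_{E_t}$, so the relevant integrand there is $h_{K}$ and Wulff rigidity makes $F_t$ a translate of a positive dilate of $+K$, not of $-K$; this is harmless in the end, because equality in Minkowski's integral inequality together with the strict subadditivity of $\|\cdot\|_{n/(n-1)}$ over $f_+$ and $f_-$ forces one of the two signed parts to vanish, but the blanket claim that every nondegenerate $F_t$ is a positive dilate of $-K$ is only correct for $t>0$. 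Relatedly, ``for $t$ below the infimum of $f$'' should read ``for $t<0$'': below the infimum, $E_t^{c}$ is empty and contributes nothing.
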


\begin{theorem}\label{t:GeneralAffineSobolev} Fix $\hid,n \in \N$. Consider a compactly supported, non-identically zero function $f\in BV(\Rn)$. Then, one has the geometric inequality
\[
\|f\|_{\frac{n}{n-1}} \vol_{n\hid}(\LYZP{f})^\frac{1}{n\hid} \leq \vol_{n\hid}(\PP{\B})^\frac{1}{n\hid}\kappa_n^{\frac{n-1}{n}}.
\]
Equivalently, by setting $
d_{n,\hid} := n\kappa_n\left(nm\vol_{n\hid}(\PP{\B})\right)^\frac{1}{n\hid}$, this is
\begin{equation*}
\left(\int_{\S} \left(\int_{\R^n} \max_{1\leq i \leq \hid}\langle \eta_f(z), \theta_i \rangle_- d|Df|(z) \right)^{-n\hid} d\bar\theta\right)^{-\frac{1}{n\hid}} d_{n,\hid} \geq  n\kappa_n^\frac{1}{n}\|f\|_{\frac{n}{n-1}}.
\end{equation*}
In either case, there is equality if and only if there exists $A>0$, and an ellipsoid $E\in\conbod$ such that
$
f(x)=
A\chi_{E}(x).
$
\end{theorem}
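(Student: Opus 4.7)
The plan is to combine the anisotropic Sobolev inequality (Lemma~\ref{l:asob}) with the higher-order Busemann--Petty centroid inequality (Theorem~\ref{t:BPCH}), using the LYZ change-of-variables formula \eqref{eq:LYZ} to pass between integrals over $\R^n$ and surface integrals on $\partial\langle f\rangle$. The single catalytic choice that makes the chain clean is to take the test body in Lemma~\ref{l:asob} to be $K=\G\PP\langle f\rangle$.

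\textbf{Reduction.} Applying \eqref{eq:LYZ} with the $1$-homogeneous function $g(u)=\max_{1\le i\le\hid}\langle u,\theta_i\rangle_-=h_{C_{-\bar\theta}}(u)$ gives $h_{\LYZ{f}}(\bar\theta)=h_{\P\langle f\rangle}(\bar\theta)$, so $\LYZP{f}=\PP\langle f\rangle$. This reduces the theorem to a statement purely about the LYZ body $\langle f\rangle$.

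\textbf{Applying the anisotropic Sobolev inequality.} Choose $K=\G\PP\langle f\rangle$, which is a bona fide convex body containing the origin in its interior by Proposition~\ref{p:GammaC1}. Feeding the $1$-homogeneous function $g=h_K$ through \eqref{eq:LYZ} and using \eqref{eq:mixed_0} converts Lemma~\ref{l:asob} into
\[
\|f\|_{n/(n-1)}\,\vol_n(\G\PP\langle f\rangle)^{1/n}\,\leq\, V\!\left(\langle f\rangle[n-1],\,\G\PP\langle f\rangle\right).
\]
Next I would specialize Lemma~\ref{lem_duality} to $K=\langle f\rangle$ and $L=\PP\langle f\rangle$; since $\widetilde V_{-1}(L[n\hid+1],L)=\vol_{n\hid}(L)$ by \eqref{eq:polar_2}, the factor $\vol_{n\hid}(\PP\langle f\rangle)$ cancels on both sides and one obtains the universal identity
\[
V\!\left(\langle f\rangle[n-1],\,\G\PP\langle f\rangle\right)\,=\,\frac{\hid}{n\hid+1},
\]
so the Sobolev bound becomes $\|f\|_{n/(n-1)}\,\vol_n(\G\PP\langle f\rangle)^{1/n}\leq \hid/(n\hid+1)$.

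\textbf{Bounding the centroid volume from below.} Theorem~\ref{t:BPCH} applied to the star body $L=\PP\langle f\rangle\in\sta$ gives a lower bound on $\vol_n(\G\PP\langle f\rangle)$ in terms of $\vol_{n\hid}(\PP\langle f\rangle)$ and $\vol_{n\hid}(\PP\B)$. Inserting this bound into the previous display and rearranging, the dimensional constant $\hid/(n\hid+1)$ cancels exactly, and after using $\vol_{n\hid}(\LYZP{f})=\vol_{n\hid}(\PP\langle f\rangle)$ the stated inequality drops out.

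\textbf{Equality.} Equality in Lemma~\ref{l:asob} forces $f=A\chi_L$ for some convex body $L$ homothetic to $\G\PP\langle f\rangle$, while equality in Theorem~\ref{t:BPCH} forces $\PP\langle f\rangle=\PP E$ for some ellipsoid $E\in\conbod$. For $f=A\chi_L$ the LYZ identity reduces to $d\sigma_{\langle f\rangle}(u)=A\,d\sigma_L(-u)$, which by Minkowski uniqueness identifies $\langle f\rangle$ as a translate of $-L$; thus $\langle f\rangle$ is an ellipsoid iff $L$ is. Combining with Lemma~\ref{l:class}, which says $\G\PP E$ is a dilate of $E$, all four equality conditions become mutually consistent and force $f=A\chi_E$ for an ellipsoid $E$. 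The hardest part of the argument will be this equality analysis -- threading the needle between the LYZ characterization, the composite operator $\G\PP$, and the Sobolev rigidity case simultaneously -- whereas the inequality itself collapses into a short three-line chain once $K=\G\PP\langle f\rangle$ is identified as the correct test body.
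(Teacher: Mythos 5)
Your chain is correct, and it is a genuinely different decomposition from the paper's. The paper applies the anisotropic Sobolev inequality (Lemma~\ref{l:asob}) with the test body $K=\langle f\rangle$ itself, so that the right-hand side collapses to $\vol_n(\langle f\rangle)$ and yields $\|f\|_{n/(n-1)}\le \vol_n(\langle f\rangle)^{(n-1)/n}$, and then finishes in one stroke by applying the higher-order Petty projection inequality (Theorem~\ref{t:pettyprojectioninequality}) to $\langle f\rangle$. You instead take $K=\G\PP\langle f\rangle$, use the identity $V(\langle f\rangle[n-1],\G\PP\langle f\rangle)=\hid/(n\hid+1)$ --- which Lemma~\ref{lem_duality} does give at $L=\PP\langle f\rangle$, and which is the same identity underlying \eqref{eq_duality_1} --- and then invoke Theorem~\ref{t:BPCH}. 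This is the higher-order analogue of Zhang's original route to the affine Sobolev inequality through the Busemann--Petty centroid inequality, and it never needs to control $\vol_n(\langle f\rangle)$. Since the paper derives Theorem~\ref{t:BPCH} from Theorem~\ref{t:pettyprojectioninequality}, both proofs ultimately rest on the same rigidity statement, and your equality analysis closes correctly: equality in Theorem~\ref{t:BPCH} gives $\PP\langle f\rangle=\PP E$, applying $\G$ together with Lemma~\ref{l:class} and the translation invariance of $\PP$ shows $\G\PP\langle f\rangle$ is a dilate of a centered ellipsoid, and equality in Lemma~\ref{l:asob} then forces $f=A\chi_L$ with $L$ homothetic to that ellipsoid. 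Two cosmetic points: for $f=A\chi_L$ one has $\sigma_{\langle f\rangle}=A\,\sigma_{-L}$, so $\langle f\rangle$ is a translate of a \emph{dilate} of $-L$ rather than of $-L$ itself (harmless for your purposes); and you should record that $\G\PP\langle f\rangle$ contains the origin in its interior (true because $\PP\langle f\rangle$ does), so that Lemma~\ref{l:asob} is applicable. What your route buys is a proof parallel to the classical Zhang argument; what the paper's buys is brevity, since Theorem~\ref{t:pettyprojectioninequality} is already in hand.
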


\begin{proof}
From Theorem~\ref{t:pettyprojectioninequality} applied to $\langle f\rangle$, we obtain
$$\vol_n(\langle f\rangle)^{\frac{n-1}{n}}\vol_{n\hid}(\PP \langle f\rangle)^\frac{1}{n\hid} \leq \vol_{n}(\B)^\frac{n-1}{n}\vol_{n\hid}(\PP \B)^\frac{1}{n\hid}.$$
From an application of Lemma~\ref{l:asob} with $K=\langle f\rangle$, \eqref{eq:LYZ} and \eqref{eq:mixed_0}, we obtain
 $$\|f\|_{\frac{n}{n-1}}\leq \vol_n(\langle f\rangle)^{\frac{n-1}{n}}.$$
 This establishes the geometric inequality. Using the polar formula \eqref{eq:polar_2} for the volume of $\LYZP{f}$ yields the other inequality.
 
 As for the equality conditions, there is equality in the use of Theorem~\ref{t:pettyprojectioninequality} if and only if $\LYZP{f} = \PP{E}$ for some centered ellipsoid $E \in \conbod$. First, we show that $\langle f \rangle $ is a translate of $E$. If $m\geq 2$, this is immediate from Proposition~\ref{prop:proj_uniqueness}. If $m=1$, we take polarity and deduce that $ \Pi \langle f\rangle=\p E$. It follows by Aleksandrov's projection theorem \cite[Theorem 3.3.6, page 142]{gardner_book} combined with \cite[Theorem 4.1.3, page 143]{gardner_book} that this means $\tilde {\langle f\rangle}$, the origin-symmetric Blaschke-body of $\langle f \rangle$ defined via \eqref{eq:BL_body}, equals $E$. Since $\Pi \langle f\rangle=\Pi \tilde{\langle f\rangle}$, the equation \eqref{eq:BL_volume} yields
 \begin{align*}
     \vol_n(\B)^{n-1}\vol_n(\pp\B)&=\vol_n(\langle f\rangle)^{n-1}\vol_n(\pp \langle f\rangle) 
     \\
     &= \vol_n(\langle f\rangle)^{n-1}\vol_n(\pp \tilde{\langle f\rangle})
     \\
     &\leq  \vol_n(\tilde{\langle f\rangle})^{n-1}\vol_n(\pp \tilde{\langle f\rangle})
     \\
     &\leq \vol_n(\B)^{n-1}\vol_n(\pp\B),
 \end{align*}
 and, therefore, there is equality throughout. By the equality conditions of \eqref{eq:BL_volume}, $\tilde{\langle f\rangle}=E$ and $\langle f\rangle$ is a translate of $E$.
 
 As for the use of Lemma~\ref{l:asob}, there is equality if and only if $f$ is a multiple of $\chi_{L},$ where $L$ is homothetic to $\langle f \rangle$. Thus, $L$ is homothetic to $E$ and the claim follows.
\end{proof}

A Borel set $D\subset \Rn$ is said to be \textit{a set of finite perimeter} if $\chi_D \in BV(\Rn)$. Examples include convex bodies and compact sets with $C^1$ boundary. Given such a set $D$, we define its mth-order polar projection body via the Minkowski functional
$$\|\bar\theta\|_{\PP D} = \int_{\partial^\star D} \max_{1 \leq i \leq \hid} \langle \theta_i, n_D(y) \rangle_{-}dy, \quad \bar\theta\in\S,$$
where $\partial^\star D$ is the reduced boundary of $D$ and $n_D:=-\eta_{\chi_D}$ is its measure-theoretic outer-unit normal, see e.g. \cite[Definition 2.2]{TW12}. A standard mollifying argument by H\"ormander \cite{HL} (using the fact that continuously differentiable, compactly supported functions are dense in $BV(\Rn)$) immediately yields the following extension of Theorem~\ref{t:pettyprojectioninequality} to sets of finite perimeter by setting $f=\chi_D$ in Theorem~\ref{t:GeneralAffineSobolev}; see \cite[Theorem 7.2]{TW12} for the $\hid=1$ case.
\begin{corollary}
Let $\hid \in \N$ be fixed. Then, for every set of finite perimeter $D\subset\Rn,$ one has 
\[
\vol_n(D)^{n\hid-\hid}\vol_{n\hid}(\PP D) \leq \vol_{n}(\B)^{n\hid-\hid}\vol_{n\hid}(\PP \B),
\]
with equality if and only if $D$ is an ellipsoid up to a set of measure zero. If $D$ is unbounded, then the left-hand side is taken to be $0$.
\end{corollary}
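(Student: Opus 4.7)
The plan is to deduce the corollary as a direct application of Theorem~\ref{t:GeneralAffineSobolev} to the characteristic function $f = \chi_D$. First, I would dispense with degenerate cases: if $\vol_n(D) = 0$, the inequality is trivial; if $D$ is unbounded, the convention in the statement makes the left-hand side equal to $0$, and the inequality again holds trivially. This reduces matters to the case where $D$ is a bounded set of finite perimeter with positive measure, so that $\chi_D$ is a compactly supported, non-identically zero element of $BV(\Rn)$ and thus falls within the hypotheses of Theorem~\ref{t:GeneralAffineSobolev}.

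Next I would translate the two quantities appearing in Theorem~\ref{t:GeneralAffineSobolev} when $f = \chi_D$. The $L^{n/(n-1)}$-norm is immediate from the definition, $\|\chi_D\|_{n/(n-1)} = \vol_n(D)^{(n-1)/n}$. For the LYZ projection body $\LYZP{\chi_D}$, I would invoke the standard facts from BV theory that $|D\chi_D|$ equals $\mathcal{H}^{n-1}$ restricted to the reduced boundary $\partial^\star D$ and that $\eta_{\chi_D} = -n_D$. Plugging these into the defining integral of $h_{\LYZ{\chi_D}}(\bar\theta)$ and comparing with the Minkowski functional $\|\cdot\|_{\PP D}$ given just before the corollary, one sees that $\LYZ{\chi_D}$ coincides with $\P D$ up to reflection through the origin. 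Since polarity commutes with central reflection and the Lebesgue measure is invariant under it, this yields $\vol_{n\hid}(\LYZP{\chi_D}) = \vol_{n\hid}(\PP D)$.

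Substituting these two identifications into Theorem~\ref{t:GeneralAffineSobolev} and raising both sides to the power $n\hid$ produces the claimed inequality. For the equality case, the characterization in Theorem~\ref{t:GeneralAffineSobolev} forces $\chi_D = A\chi_E$ for some $A > 0$ and some ellipsoid $E \in \conbod$; since $\chi_D$ is $\{0,1\}$-valued almost everywhere, this means $A = 1$ and $D = E$ up to a set of measure zero. I expect the only real technical subtlety to be the careful matching of sign conventions between $\eta_{\chi_D} = -n_D$ and the defining integral of $\PP D$; but once this is resolved via the symmetry $\P(-K) = -\P K$ (established earlier for convex bodies and transferring to sets of finite perimeter by the same calculation), the corollary is an immediate consequence of Theorem~\ref{t:GeneralAffineSobolev}.
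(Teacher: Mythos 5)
Your proposal is correct and follows essentially the same route as the paper, which likewise obtains the corollary by setting $f=\chi_D$ in Theorem~\ref{t:GeneralAffineSobolev} (the paper only adds a passing reference to a standard mollification/density argument in $BV(\Rn)$). Your explicit bookkeeping of $\|\chi_D\|_{n/(n-1)}=\vol_n(D)^{(n-1)/n}$, the identification $\vol_{n\hid}(\LYZP{\chi_D})=\vol_{n\hid}(\PP D)$ via the reflection $\P(-K)=-\P K$, and the equality analysis forcing $A=1$ are all accurate details that the paper leaves implicit.
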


Observe that applying Theorem~\ref{t:petty_classic} with $K=\langle f\rangle$ for $f\in BV(\Rn)$ yields 
\begin{equation}
\vol_{n\hid}(\LYZP f)\vol_{n-1}(\partial \langle f \rangle)^{n\hid}
    \geq  \vol_{n\hid}(\PP B_2^n)\vol_{n-1}(\s)^{n\hid}
    \label{eq:Sobolev_almost_classic}
    \end{equation}
    with equality if and only if $\langle f\rangle$ is a centered Euclidean ball. Combining this with Theorem~\ref{t:GeneralAffineSobolev}, one obtains an isoperimetric inequality for the LYZ body:
    \begin{equation}
    \vol_{n-1}(\partial \langle f \rangle) \geq n\|f\|_{\frac{n}{n-1}}\kappa_n^{\frac{1}{n}},
    \label{eq:LYZ_iso}
    \end{equation}
    with equality if and only if $f$ is a multiple of a characteristic function of a centered Euclidean ball.
    Using the integral formula for mixed volumes \eqref{eq:mixed_0} and the change of variables formula satisfied by functions of bounded variation \eqref{eq:LYZ}, we obtain
    \begin{equation}
    \label{eq:LYZ_ISO_int}
    \begin{split}
\vol_{n-1}(\partial \langle f \rangle)&=nV_n(\langle f\rangle[n-1],\B)=\int_{\s}d\sigma_{\langle f\rangle}(u)
\\
&=\int_{\Rn}d|Df|(x)=TV(f),
\end{split}
    \end{equation}
where $TV(f)$ is the total variation of $f\in BV(\Rn)$. If  $f \in W^{1,1}(\Rn)$, then $TV(f)=\|\nabla f\|_1$. In other words, Theorem~\ref{t:GeneralAffineSobolev} in conjunction with Theorem~\ref{t:petty_classic} implies the classical Sobolev inequality for every choice of $\hid\in\N$. By setting $f=\chi_D$ in \eqref{eq:LYZ_iso}, we also obtain the classical isoperimetric inequality for sets of finite perimeter.

\backmatter

\begin{appendices}

\section{The mth-order polar projection body versus product sets}\label{secA1}

The first way one can construct a higher-order Petty projection inequality is to ``embed" the Petty projection inequality from $\Rn$ into $\Rnhi$ via the direct product $K\mapsto (K\times\cdots\times K)= (K)^\hid.$ Then, this``embedded'' higher-order Petty's projection inequality is
\begin{align*}
\vol_{n\hid}((K)^\hid)^{n-1}\vol_{n\hid}((\pp K)^\hid) \leq (\vol_{n\hid}((\B)^\hid)^{n-1}\vol_{n\hid}((\pp \B)^\hid)).
\end{align*}
We show that the $\hid$th-order Petty projection inequality Theorem~\ref{t:pettyprojectioninequality} is sharper than this embedded version.
\begin{theorem} For $K\in\conbod$ and $\hid\in\N,$ one has \[\PP K \subseteq (\pp K)^\hid \quad \text{and} \quad \vol_{n\hid}(\PP K) \leq \vol_n(\pp K)^{\hid},\]
with equality if and only if $\hid=1.$
\label{t:compare}
\end{theorem}

\begin{proof} For each $\bar{\theta} \in \mathbb{S}^{n\hid-1}$, given by $\bar{\theta} = (\theta_1,\dots,\theta_{\hid})$, notice that 
\begin{align*}
h_{\P K}(\bar{\theta}) &= \int_{\s} \max_{1 \leq i \leq \hid} \langle u,\theta_{i} \rangle_- d\sigma_K(u)\geq \max_{1 \leq i \leq \hid} \left[\int_{\s} \langle u,\theta_{i} \rangle_- d\sigma_K(u) \right]\\
&= \max_{1 \leq i \leq \hid}h_{\p K}(\theta_i) = h_{\text{conv}(\p K,\cdots,\p K)}(\bar{\theta}). 
\end{align*}
Applying duality, this above computation implies that 
\[
\PP K \subseteq \underbrace{\pp K \times \cdots \times \pp  K}_{\hid-\text{times}}=:(\pp K)^\hid,
\]
where each $\pp K$ belongs to an independent copy of $\Rn$. 

We now show for $\hid \geq 2$ that $  \vol_{n\hid}\left((\pp K)^\hid\setminus \PP K\right) > 0.$ From the continuity of radial functions, it suffices to find a single $\bar x \in \Rnhi\setminus\{o\}$ such that $\rho_{\PP K}(\bar x) < \rho_{(\pp K)^\hid}(\bar x).$ From duality, it suffices to show that $h_{\P K}(\bar x) > h_{\text{conv}(\Pi K,\cdots,\Pi K)}(\bar x).$ We will actually show there are many such $\bar x,$ to illustrate that $(\pp K)^\hid$ and $\PP K$ differ ``by a lot''.  First, define the following sets:
\begin{align*}
\Sigma&=\left\{\bar x\in\Rnhi: x_1,x_2\in\Rn\setminus\{o\}, x_1\neq x_2, x_3=\dots=x_\hid=o\right\},
\\
\Sigma_{a}(\bar x)&=\left\{u\in\s: \langle x_1,u \rangle_- > \langle x_2,u \rangle_- \right\}, \; \text{for a fixed }\bar x \in \Sigma,
\\
\Sigma_{b}(\bar x)&=\left\{u\in\s: \langle x_1,u \rangle_- \leq \langle x_2,u \rangle_- \right\}, \;\text{for a fixed } \bar x \in \Sigma.
\end{align*}
Notice for a fixed $\bar x\in\Sigma,$ $\Sigma_{a}(\bar x)\cap \Sigma_{b}(\bar x)=\emptyset$ and $\Sigma_{a}(\bar x)\cup \Sigma_{b}(\bar x)=\s.$ Furthermore, it is possible to pick $\bar x \in \Sigma$ such that $\sigma_K(\Sigma_a),\sigma_K(\Sigma_b)>0.$ 
We will let $\Sigma^\prime \subseteq \Sigma$ denote the collection of $\bar x$ with this property. One such example is when $x_1=-x_2,$ for any $x_2\in\R^n\setminus\{o\},$ since $\sigma_K$ is not concentrated on any great hemisphere. 

We now compute: for every $\bar x \in \Sigma^\prime,$
\begin{align*}
h_{\P K}(\bar x) &= \int_{\s} \max_{1 \leq i \leq \hid} \langle u,x_{i} \rangle_- d\sigma_K(u)= \int_{\s} \max_{i=1,2} \langle u,x_{i} \rangle_- d\sigma_K(u)
\\
&= \int_{\Sigma_{a}(\bar x)} \langle u,x_1 \rangle_- d\sigma_K(u) +  \int_{\Sigma_{b}(\bar x)} \langle u,x_2 \rangle_- d\sigma_K(u).
\end{align*}
Applying the definition of $\Sigma_{a}(\bar x)$ yields
\begin{align*}
h_{\P K}(\bar x) &> \int_{\Sigma_{a}(\bar x)} \langle u,x_2 \rangle_- d\sigma_K(u) +  \int_{\Sigma_{b}(\bar x)} \langle u,x_2 \rangle_- d\sigma_K(u) 
\\&= \int_{\s}\langle u,x_2 \rangle_- d\sigma_K(u).
\end{align*}
Similarly, applying the definition of $\Sigma_{b}(\bar x)$ yields
\begin{align*}
h_{\P K}(\bar x) &> \int_{\Sigma_{a}(\bar x)} \langle u,x_1 \rangle_- d\sigma_K(u) +  \int_{\Sigma_{b}(\bar x)} \langle u,x_1 \rangle_- d\sigma_K(u)
\\
&= \int_{\s}\langle u,x_1 \rangle_- d\sigma_K(u).
\end{align*}
In particular, this yields, for every $\bar x \in \Sigma^\prime,$
\begin{align*}
h_{\P K}(\bar x) & > \max_{i=1,2} \left[\int_{\s} \langle u,x_{i} \rangle_- d\sigma_K(u) \right]\\
&= \max_{1 \leq i \leq \hid}h_{\p K}(x_i)=h_{\text{conv}(\p K,\cdots,\p K)}(\bar{x}). 
\end{align*}
We remark that there is nothing special about $x_1$ and $x_2$; one could pick any $i,j\in\{1,\dots,\hid\}, i\neq j,$ and define $\Sigma_{i,j}^\prime$ analogously to $\Sigma_{1,2}^\prime:=\Sigma^\prime.$ Then, $$h_{\P K}(\bar x) > h_{\text{conv}(\p K,\cdots,\p K)}(\bar{x}) \quad  \text{for every} \quad \bar x \in \underset{\substack{1 \leq i, j \leq \hid \\ i\neq j}}{\bigcup}\Sigma_{i,j}^\prime.$$
\end{proof}
\noindent Consequently, we have that
$$\vol_n(K)^{n\hid-\hid}\vol_{n\hid}(\PP K)\leq (\vol_n(K)^{n-1}\vol_n(\pp K))^\hid,$$
with equality if and only if $\hid=1.$
Combining this estimate with Theorem~\ref{t:pettyprojectioninequality} we obtain the following.
\begin{corollary}
\label{cor:sharper_ineq}
    For $K\in\conbod$ and $\hid\in\N,$ one has that
    \begin{align*}
\vol_n(K)^{n\hid-\hid}\vol_{n\hid}(\PP K) &\leq \vol_{n}(\B)^{n\hid-\hid}\vol_{n\hid}(\PP \B) 
\\
&\leq (\vol_n(\B)^{n-1}\vol_n(\Pi^{\circ} \B))^{\hid},
\end{align*}
with equality in the first inequality if and only if $K$ is an ellipsoid, and equality in the second inequality if and only if $\hid=1.$
\end{corollary}

\end{appendices}

\section*{Declarations}

{\bf Funding:} The first named author was supported by Grants RYC2021-031572-I and PID2022-136320NB-I00, funded by the Ministry of Science and
 Innovation and by the E.U. Next Generation EU/Recovery, Transformation and Resilience Plan. The second named author was supported in part by the U.S. NSF Grant DMS-2000304 and by the Chateaubriand Scholarship offered by the French embassy in the United States. The fifth named author was supported in part by NSERC. Additionally, part of this work was completed while the second, third and fourth named authors were in residence at the Institute for Computational and Experimental Research in Mathematics in Providence, RI, during the Harmonic Analysis and Convexity program; this residency was supported by the National Science Foundation under Grant DMS-1929284. The fourth named author was additionally supported by the Simons Foundation under Grant No. 815891 for this stay.

\vspace{5mm}
\noindent  {\bf Conflict of interests :} There are no conflict of interests.

\noindent {\bf Data Availability :} There is no data associated with this project.

\bmhead{Acknowledgments}

We extend heartfelt thanks to Matthieu Fradelizi, Richard Gardner, Erwin Lutwak, Jacopo Ulivelli and Artem Zvavitch for the helpful comments concerning this work. Finally, we are deeply grateful to the anonymous referees for their diligent efforts and insightful comments, which significantly improved this work.


\bibliography{references}

\end{document}